\definecolor{darkgreen}{rgb}{0,0.4,0}
\definecolor{darkgreen}{rgb}{0,0.6,0}
\newtheorem{theorem}{Theorem}
\newtheorem{definition}{Definition}
\newtheorem{assumption}{Assumption}
\newtheorem{lemma}{Lemma}
\newtheorem{remark}{Remark}
\newtheorem{proof}{Proof}
\newtheorem{proposition}{Proposition}
\def\begquo{\begin{quote}}
\def\endquo{\end{quote}}
\def\begequarr{\begin{eqnarray}}
\def\endequarr{\end{eqnarray}}
\def\begequarrs{\begin{eqnarray*}}
\def\endequarrs{\end{eqnarray*}}
\def\begarr{\begin{array}}
\def\endarr{\end{array}}
\def\begequ{\begin{equation}}
\def\endequ{\end{equation}}
\def\begdes{\begin{description}}
\def\enddes{\end{description}}
\def\begenu{\begin{enumerate}}
\def\begite{\begin{itemize}}
\def\endite{\end{itemize}}
\def\endenu{\end{enumerate}}
\def\lef[{\left[\begin{array}}
\def\rig]{\end{array}\right]}
\def\begcen{\begin{center}}
\def\endcen{\end{center}}
\def\begrem{\begin{remark}\rm}
\def\endrem{\end{remark}}
\def\begdef{\begin{definition}}
\def\enddef{\end{definition}}
\def\begpro{\begin{proposition}}
\def\endpro{\end{proposition}}
\def\begfac{\begin{fact}}
\def\endfac{\end{fact}}
\def\begass{\begin{assumption}}
\def\endass{\end{assumption}}
\def\begsubequ{\begin{subequations}}
\def\endsubequ{\end{subequations}}
\def\begmat#1{\begin{bmatrix}#1\end{bmatrix}}
\def\begpma#1{\begin{pmatrix}#1\end{pmatrix}}
\def\cale{{\cal E}}
\def\cali{{\cal I}}
\def\calh{{\cal H}}
\def\calc{{\cal C}}
\def\cale{{\cal E}}
\def\call{{\cal L}}
\def\cala{{\cal A}}
\def\bfx{{\bf x}}
\def\dlc{\delta \chi}
\def\phil{\phi^{\mathtt{L}}}
\def\phile{\phi^{\mathtt{L}}_e}
\def\rhol{\rho^{\mathtt{L}}}
\def\L2e{{\cal L}_{2e}}
\def\rea{\mathbb{R}}
\def\diag{\mbox{diag}}
\def\col{\mbox{col}}
\def\hal{{1 \over 2}}
\def\diag{\mbox{diag}}
\def\im{\mbox{Im}\;}
\def\min{{\mbox{min}}}
\def\max{{\mbox{max}}}
\def\IJRNLC{{\it Int. J. Robust and Nonlinear Control}}
\def\TAC{{\it IEEE Trans. Automatic Control}}
\def\CDC{{\it IEEE Conf. Decision and Control}}
\def\SCL{{\it Systems \& Control Letters}}
\def\AUT{{\it Automatica}}
\def\SIAM{{\it SIAM J. Control and Optimization}}
\def\ACC{{\it American Control Conf.}}
\newcommand{\blue}[1]{{\color{blue} #1}}
\def\QED{\hfill $\square$}
\def\qed{\hfill $\triangleleft$}
\def\BibTeX{{\rm B\kern-.05em{\sc i\kern-.025em b}\kern-.08em
    T\kern-.1667em\lower.7ex\hbox{E}\kern-.125emX}}
\begin{document}

\title{Reduced-Order Nonlinear Observers via Contraction Analysis and Convex Optimization}

\author{Bowen Yi, Ruigang Wang, and Ian R. Manchester
%
\thanks{This work was supported by Australian Research Council.}
\thanks{The authors are with Australian Centre for Field Robotics and Sydney Institute for Robotics and Intelligent Systems, The University of Sydney, Sydney, NSW 2006, Australia. (email: \texttt{\{bowen.yi,ruigang.wang,ian.manchester\}@sydney.edu.au})
}
}

\maketitle

\begin{abstract}
In this paper, we propose a new approach to design globally convergent reduced-order observers for nonlinear control systems via contraction analysis and convex optimization. Despite the fact that contraction is a concept naturally suitable for state estimation, the existing solutions are either local or relatively conservative when applying to physical systems. To address this, we show that this problem can be translated into an off-line search for a coordinate transformation after which the dynamics is (transversely) contracting. The obtained sufficient condition consists of some easily verifiable differential inequalities, which, on one hand, identify a very general class of ``detectable'' nonlinear systems, and on the other hand, can be expressed as computationally efficient convex optimization, making the design procedure more systematic. Connections with some well-established approaches and concepts are also clarified in the paper. Finally, we illustrate the proposed method with several numerical and physical examples, including polynomial, mechanical, electromechanical and biochemical systems.
\end{abstract}

\begin{IEEEkeywords}
state observer, nonlinear system, contraction analysis, convex optimization
\end{IEEEkeywords}

%
\section{Introduction}
\label{sec1}
%
Online state estimation of dynamical systems is of both practical and theoretical importance, since some system states are important for the purpose of control or monitoring but often unavailable due to technological or cost constraints. Apart from the control community, similar problems also arise in many other fields, {\em e.g.}, time-series prediction, machine learning and signal processing.

For linear systems, there are comprehensive results developed for observer design, see \cite{LUEtac71} for the asymptotic case, and recent papers \cite{ENGKRE,ORT2020} achieving finite-time convergence. In contrast, constructive nonlinear observer is a more challenging task, which has been intensively studied for several decades in the control community, producing more than six research monographs on this topic \cite{BES,GAUKUPbook,ASTbook,NIJFOSbook,BER,KHAbook}. Despite many efforts, there are few tools available to design globally or semi-globally convergent observers for nonlinear \emph{control} (a.k.a. time-varying or non-autonomous) systems, with notable exception of high-gain observers \cite{KHAbook} and recently results on Kazantzis-Kravaris-Luenberger (KKL) observers \cite{BERAND}. To the best of our knowledge, these two methods are the most general and systematic frameworks for nonlinear observer design at present. In high-gain observers, a triangular form of the system dynamics with strong differential observability is required to achieve semi-global convergence by high-gain injection, which, however, is widely recognized as harmful in engineering practice with the deleterious effect of amplification of unavoidable high-frequency noise \cite{KHAPRA}. KKL observers are motivated by Luenberger's initial idea by adopting a coordinate change in order to get a linear stable error dynamics. Such an idea was extended to the nonlinear context in \cite{SHO,KAZKRA}, later elaborated in \cite{KREENG,ANDPRA}, and finally applied to non-autonomous systems in \cite{BERAND}, which provides an elegant theoretical framework to state estimation. From the numerical perspective, the last step in KKL observers involves \emph{online} computation of an inverse mapping of smoothing operators, which can be formalized as non-convex online optimization, making it a duanting task to be solved in real time, particularly for high-dimensional systems. Besides, there are some recently developed observer design tools applicable to different particular classes of nonlinear systems, {\em e.g.}, immersion and invariance (I\&I) observer \cite{ASTbook}, and parameter estimation-based observer \cite{ORTscl,YIetal}. 

In this paper, we aim to provide a novel constructive approach to nonlinear observers via contraction analysis, a concept which was introduced to the control community in \cite{LOHSLO}, and may historically date back to \cite{LEW}. Contraction, also known as incremental exponential stability, studies the convergence between any pairs of solutions rather than a particular one, by studying the differential dynamics. Its extensions to constructive nonlinear feedback are studied in \cite{MANSLOtac}, see also \cite{TOBetal} for its application in system identification. As clearly illustrated in \cite{LOHSLO}, observer design is one of the initial motivations for contraction analysis. Indeed, using contraction analysis in observer design is not a new idea, and has similarities to the seminal work \cite{LUE}, where a coordinate change is involved in order to get a stable error dynamics. In \cite{TARRAS}, a technique similar to contraction analysis was used for constructive nonlinear stochastic observers. More recently, some necessary conditions on the existence of asymptotically convergent observers in the \emph{original coordinate} for general nonlinear autonomous systems are proposed in \cite{SANPRA}, which is motivated by the study of contracting flows in Riemannian manifolds. It shows that the existence of a convergent observer for autonomous systems implies that the system vector field is strictly geodesically monotonic, tangentially to the output function level sets. However, from a constructive viewpoint the result in \cite{SANPRA} can only guarantee local convergence, unless imposing a totally geodesic assumption, which is generally difficult to verify for a given system. There are also some results using contraction to design observers for specific classes of systems, {\em e.g.}, \cite{AGHROU} for mechanical systems and \cite{YAGYAZ} for port-Hamiltonian systems, whose nature is intrinsically local. 

With these considerations, the following questions arise.
\begin{itemize}
    \item[1)] What can be the sufficient conditions to design a \emph{globally} convergent observer for nonlinear control systems by studying contracting flows? Though an answer has already been partially reported in our previous work \cite{MAN}, we here consider reduced-order observer design and show that coordinate transformation  provides sufficient degrees of freedom, in this way sharpening the existing results. 
    \item[2)] Is it possible to translate the obtained condition into off-line convex optimization? When designing globally convergent observers, it is quite common to encounter partial differential equations (PDEs) or inequalities, thus requiring the user familiar with system structures or invoking some physical insights. To circumvent the difficulty, we give a convex representation in the paper, making it possible to take advantage of powerful numerical tools.
    \item[3)] Is the set of systems identified in the paper large enough? To answer this question, we give links between the proposed method and some well-established concepts, or methods, including in the linear system context, strong differential observability, I\&I observers and transverse contraction. As illustrations, we will show how to use the proposed framework to design reduced-order observers for several benchmark physical systems---the magnetic levitation (MagLev) model, the cart-pendulum system, and a biological reactor---and an academic example to verify the results with convex constraints. 
\end{itemize}

The remainder of the paper is organized as follows. In Section \ref{sec2} we formulate the problem of reduced-order observer design and give some preliminaries on contraction analysis. In Section \ref{sec:3} we start with the basic case that the observer shares the dimension of the unknown states, and present our general design framework. It is followed by the convex representation, as well as the generalized result removing the dimension constraint. In Section \ref{sec4}, we present further discussions on the proposed method and compare it with some existing methods. Some examples with simulation results are given in Section \ref{sec5}. The paper is wrapped up with some concluding remarks.

{\em Caveat.} An abridged conference version of the paper can be found in \cite{YIacc}.

%
\section{Problem Formulation and Preliminaries}
\label{sec2}
%

\subsection{Notations and symbols}
{\em Notations.} All mappings are smooth enough, and when clear from the context, the arguments of mappings and the subindex of operators will be omitted. Given a matrix $P(x)$ and a vector field $f(x)$ with proper dimensions, we define the directional derivative as $\partial_f P(x) = \sum_j {\partial P(x) \over \partial x_j} f_j(x)$. For a full-rank matrix $g\in \rea^{n\times m}$ ($m<n$), we denote the generalized inverse as $g^\dagger  = [g^\top g]^{-1} g^\top$ and $g^\bot$ a full-rank left-annihilator such that $g^\bot g =0$. We use $|\cdot|$ to denote Euclidean norms of vectors or matrices, and $B_\epsilon(\cala)=\{x\in \rea^n|\inf_{y\in\cala}|x-y|\le \epsilon\}$ for a compact set $\cala \subset \rea^n$ and some $\epsilon>0$. We use $\mbox{cl}(\cdot)$ to represent the closure of a set, and $\mbox{Im}[\cdot]$ denotes the image of a given function. Given $f: \rea^n \to \rea$ we define the differential operator $\nabla f := ({\partial f \over \partial x})^\top$, and use $f(x)|_{x_b}^{x_a} = f(x_a) - f(x_b)$. For square matrices $A$ and $B$, the notation $A \succeq B$ indicates $(A-B)$ positive semidefinite, and we use $\texttt{sym}\{A\}$ to represent the symmetric part $\hal(A+A^\top)$ of $A$. The set of all polynomials in $x\in \rea^n$ with real coefficients of degree $m \in \mathbb{N}_+$ is written as $\rea_m[x]$. A matrix-valued function $A:\rea^n \to \rea^{m\times m}_{\succ 0}$ is called uniformly bounded if $a_1 I \preceq A(x) \preceq a_2 I, \; \forall x$ with some $a_2\ge a_1>0$. $\lambda_i\{\cdot\}$s denote the eigenvalues of square matrices. We use $\phil$ to denote the left inverse of a mapping $\phi$, {\em i.e.}, $\phil\circ \phi$ is an identity mapping.

{\em Nomenclature.} The variables $x,y,u$ represent the unknown state, measurable state (output) and the input, respectively, which all live in Euclidean space, and we also write $\chi:=\col(x,y)$. For a variable $(\cdot)$, we assume its dimension being $n_{(\cdot)}$. For the dynamics of $x$, we use $X(t;x_0)$ to denote its solution from the initial condition $x_0$ at $t=0$.


%
\subsection{Problem formulation}
\label{sec21}

In this paper we consider state estimation of nonlinear control (time-varying) systems in the form of
\begin{equation}
\label{eq:syst}
\begin{aligned}
	\dot x  &= f_x(x,y,u), \\
	\dot y  &= f_y(x,y,u),
\end{aligned}
\end{equation}
with unknown state $x \in \rea^{n_x}$, measured state $y \in \rea^{n_y}$ and input $u \in \rea^{n_u}$. We sometimes compactly write as
\begin{equation}
\label{chi}
\dot \chi = f(\chi,u)	
\end{equation}
and $n_\chi: = n_x+n_y$. Following the standard practice in observer design, we assume the considered input signal $u:\rea_+ \to \rea^{n_u}$ belonging to a set ${\cal U} \subset \call_\infty^{n_u}$, such that the system \eqref{eq:syst} is forward complete. Namely, the solution exists uniquely for $t\in [0,\infty)$.

{\em Problem 1.} The objective is to design an observer
\begin{equation}
\label{observer:general}
\begin{aligned}
	\dot{\xi} & = N(\xi, y, u),
	\\
	\hat x   & = H (\xi, y)
\end{aligned}
\end{equation}
with $\xi \in \rea^{n_\xi}$ the observer state, and $\hat x \in \rea^{n_x}$ the observer output, guaranteeing
\begin{equation}
\label{conv:general}
 \lim_{t\to\infty} |\hat x(t) - x(t)| =0.
\end{equation}
If there exists a function $b: \rea^{n_x} \times \rea^{n_x} \to \rea_{\ge 0}$ such that
$$
|\hat x(t) - x(t) |\le e^{- \lambda t} b(\hat{x}(0),x(0)), \quad b(x,x) =0,~ \lambda>0,
$$
then we call \eqref{observer:general} an exponential observer with rate $\lambda$. When only estimating the unknown part $x$ rather than the full system state $(x,y)$, we refer it as a reduced-order observer. In particular, the case $n_x \le n_\xi < n_\chi$ is clearly reduced-order.

\begin{remark}\rm
\label{rem1}
We consider the class of systems \eqref{eq:syst} with the output partial states of the system. On one hand, this coordinate simplifies the following analysis in reduced-order observers design; on the other hand, it is quite general to cover several observable canonical forms and plenty of physical models, since the model $\dot \bfx =f (\bfx,u)$, $y =h(\bfx)$ can be transformed into \eqref{eq:syst} by finding the complementary coordinates in many cases assuming that $\nabla h(\bfx)$ is full rank. 
\end{remark}

\subsection{Preliminaries on contraction analysis}

Contraction analysis allows us to obtain convergence independent of knowledge of a particular solution \cite{LOHSLO,FORSEP}. In contrast, a pre-defined solution is required in standard Lyapunov methods, and one then needs to construct a Lyapunov function of observation errors \cite{TSI,PRA}. This fact makes contraction analysis particularly tailored for observer design. Let us recall some definitions first.
 
\begin{definition}
\label{def:contraction}\rm ({\em Contraction})
Assume the nonlinear system 
\begin{equation}
\label{NLS}
\dot \bfx = f(\bfx, t), \quad \bfx\in \rea^{n}
\end{equation}
is forward complete with the solution $X(t;\bfx_0)$ from $\bfx(0)=\bfx_0$. The system is
\begin{itemize}
 \item {}asymptotically contracting if $\forall (x_a,x_b) \in \rea^{n}\times \rea^{n}$,
 $$
 |X(t;x_a)-X(t;x_b)| \le \kappa(|x_a-x_b|,t)
 $$
 holds for any $t\ge 0$ and some function $\kappa$ of class $\mathcal{KL}$.
 \item {}contracting if the system is asymptotically contracting with $\kappa(a,t) = k e^{-\lambda t}a $ for some $k,\lambda>0$.
\end{itemize}
\end{definition}

In contraction analysis we need to calculate the differential behaviour of an infinitesimal displacement $\delta\bfx$ along the flow $X(t;\bfx(0))$ generated by \eqref{NLS}, then obtaining the differential dynamics
\begequ
\label{dot_deltaxi}
{d\over dt}\delta\bfx = {\partial f \over \partial \bfx}(\bfx, t)\delta \bfx,
\endequ
which can be regarded as the first-order approximation along solutions. Then, the incremental stability of \eqref{NLS} is characterized by the stability of the linear time-varying (LTV) differential dynamics \eqref{dot_deltaxi}. A central result in \cite{LOHSLO} is that if there is a uniformly bounded metric $M: \rea^{n} \to \rea^{n \times n}_{\succ0}$, such that
\begin{equation}
\label{contraction}
\dot M + {\partial f \over \partial \bfx}^\top M + M{\partial f \over \partial \bfx} \preceq -2\lambda M \quad \mbox{(or $\prec 0$)},
\end{equation}
then the system \eqref{NLS} is contracting (or asymptotically contracting). The interested reader may refer to \cite{LOHSLO,FORSEP} for comprehensive introduction.

\section{Main Results on Constructive Solutions}
\label{sec:3}

In this section, we study \emph{constructive} solutions to Problem 1, as well as their convex representation. 

\subsection{Contracting reduced-order observers}
\label{sec:3a}

In this subsection, we will show that Problem 1 can be translated into contracting observers design, in which we aim to find mappings to achieve both \emph{contraction} and \emph{correctness}. Here, correctness refers to that the observer dynamics \eqref{observer:general} admits a particular solution $\xi_\star(t)$ such that the associated observer output $H(\xi_\star,y)$ always equals to the unknown system state $x(t)$. To see this, assuming that Problem 1 is solvable, thus any different solutions $\xi_a(t),\xi_b(t) \in \rea^{n_\xi}$ guaranteeing
\begin{equation*}
\label{contraction:xi}
\begin{aligned}
	& \limsup_{t\to\infty} \big|H(\xi_a(t),y(t)) - H(\xi_b(t),y(t))\big| \\
\le	& \limsup_{t\to\infty} \big| H(\xi_a(t),y(t)) - x(t)\big| + \big| H(\xi_b(t),y(t)) - x(t)\big|\\
= & 0.
\end{aligned}
\end{equation*}
Clearly, a sufficient condition to $ \big|H(\xi_a,y) - H(\xi_b,y)\big| \to 0$ is
contraction of the observer dynamics in \eqref{observer:general}. Furthermore, in order to guarantee \eqref{conv:general}, there should exist a \emph{particular solution} of \eqref{observer:general} which establishes the connection between $\xi$ and $x$. Intuitively, we construct, analyze, and implement an observer in a carefully selected $\xi$-coordinate, with $H(\cdot)$ linking these two coordinates. It is summarized as follows.

{\em Problem 2.} ({\em Contracting observer}) Find smooth mappings
$
N: \rea^{n_\xi}\times \rea^{n_y}\times \rea^{n_u} \to \rea^{n_\xi}$ and $ H:\rea^{n_\xi}\times \rea^{n_y} \to \rea^{n_x}
$
guaranteeing
\begin{itemize}
\item[\bf C1] (\emph{contraction}) the observer dynamics \eqref{observer:general} is contracting (or asymptotically contracting for the asymptotic case).
\item[\bf C2] (\emph{correctness}) the partial differential equation
\begin{equation}
\label{pde:correctness}
\begin{aligned}
{\partial H \over \partial \xi}(\xi,y)N(\xi,y,u)  + {\partial H\over \partial y}(\xi,y)f_y(H(\xi,y),y,u)
\\
=f_x(H(\xi,y),y,u)
\end{aligned}
\end{equation}
holds, and
$
\im_{\xi \in \cala}[H(\xi,y)]= \rea^{n_x}
$
uniformly in $y$ for some open set $\cala \subset \rea^{n_\xi}$, in which the observer dynamics \eqref{observer:general} is invariant.\footnote{That is $\xi(0)\in \cala$ implying $\xi(t)\in \cala$, $\forall t \ge 0$ for all $y \in \rea^{n_y}$.}\qed
\end{itemize}

\begin{proposition}
\label{prop:translation}\rm
Assume the system states of \eqref{eq:syst} are bounded. If we can find mappings $N(\xi,y,u)$ and $H(\xi,y)$ satisfying {\bf C1}-{\bf C2}, then the observer \eqref{observer:general} solves Problem 1. \qed
\end{proposition}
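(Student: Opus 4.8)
The plan is to show that conditions \textbf{C1} and \textbf{C2} together force the observer output $\hat x(t) = H(\xi(t),y(t))$ to converge to the true state $x(t)$, thereby establishing \eqref{conv:general}. The strategy exploits the decomposition already hinted at in the text: correctness (\textbf{C2}) guarantees the \emph{existence} of a distinguished trajectory $\xi_\star(t)$ of the observer dynamics along which $H(\xi_\star(t),y(t)) = x(t)$ exactly, while contraction (\textbf{C1}) guarantees that \emph{every} trajectory of the observer converges to this distinguished one. Composing these two facts yields the result.

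First I would construct the particular solution $\xi_\star(t)$ and verify it is \emph{correct}. The surjectivity $\im_{\xi\in\cala}[H(\xi,y)] = \rea^{n_x}$ uniform in $y$ ensures that at $t=0$ we may pick $\xi_\star(0)\in\cala$ with $H(\xi_\star(0),y(0)) = x(0)$. I then define $\xi_\star(t)$ as the solution of the observer ODE $\dot\xi = N(\xi,y,u)$ from this initial condition, which remains in $\cala$ by the invariance assumption. The key computation is to differentiate $H(\xi_\star(t),y(t))$ along the flow and invoke \eqref{pde:correctness}: using the chain rule,
\begin{equation*}
\frac{d}{dt}H(\xi_\star,y) = \frac{\partial H}{\partial \xi}N(\xi_\star,y,u) + \frac{\partial H}{\partial y}\dot y,
\end{equation*}
and since $\dot y = f_y(x,y,u) = f_y(H(\xi_\star,y),y,u)$ whenever $H(\xi_\star,y)=x$, the PDE \eqref{pde:correctness} gives $\frac{d}{dt}H(\xi_\star,y) = f_x(H(\xi_\star,y),y,u)$. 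Thus $H(\xi_\star(t),y(t))$ and $x(t)$ satisfy the same ODE $\dot x = f_x(x,y,u)$ with the same initial value, so by uniqueness of solutions $H(\xi_\star(t),y(t)) = x(t)$ for all $t\ge 0$.

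Next I would apply contraction to the actual observer trajectory $\xi(t)$ (from an arbitrary admissible initialization) and the distinguished trajectory $\xi_\star(t)$. Since both are solutions of the same non-autonomous system $\dot\xi = N(\xi,y(t),u(t))$ driven by the common signals $y,u$, condition \textbf{C1} (Definition \ref{def:contraction}) gives $|\xi(t)-\xi_\star(t)|\to 0$, and in the contracting case the decay is exponential. Finally I would transfer this to the output error: because the system states are assumed bounded, $\xi(t)$ and $\xi_\star(t)$ evolve in a bounded region where $H$ is smooth and hence locally Lipschitz in $\xi$, so
\begin{equation*}
|\hat x(t) - x(t)| = |H(\xi(t),y(t)) - H(\xi_\star(t),y(t))| \le L\,|\xi(t)-\xi_\star(t)| \to 0,
\end{equation*}
which is exactly \eqref{conv:general}.

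The main obstacle I anticipate is not any single computation but the careful handling of the \emph{invariance and well-posedness} of the distinguished trajectory: one must ensure $\xi_\star(0)$ can be chosen in $\cala$ consistently with $y(0)$, that $\xi_\star(t)$ stays in $\cala$ so that both the PDE \eqref{pde:correctness} and the uniqueness argument remain valid there, and that the region traversed by $\xi(t)$ and $\xi_\star(t)$ is genuinely bounded so that the Lipschitz estimate on $H$ holds with a uniform constant. The boundedness hypothesis on the system states is precisely what underwrites this last point, converting the metric-based contraction estimate on $\xi$ into a norm estimate on $\hat x - x$; the delicate step is confirming that the contraction inequality \eqref{contraction}, which concerns the differential dynamics of $N$, indeed applies to the pair $(\xi,\xi_\star)$ driven by identical exogenous signals.
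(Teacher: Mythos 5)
Your overall strategy mirrors the paper's proof exactly: pick $\xi_\star(0)\in\cala$ via the surjectivity and invariance hypotheses in {\bf C2}, use correctness to pin $H(\xi_\star(t),y(t))$ to $x(t)$, use contraction ({\bf C1}) to get $|\xi(t)-\xi_\star(t)|\to 0$, and convert this into output convergence through a Lipschitz-type bound on $H$ over the bounded region supplied by the state-boundedness assumption (the paper takes the constant to be $\max|\nabla_\xi H|$ over $B_\epsilon(\mbox{cl}(\Omega_\xi))$, exactly your ``$L$'').

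There is, however, one step whose justification is circular as written: your verification that $H(\xi_\star(t),y(t))=x(t)$. You substitute $\dot y = f_y(H(\xi_\star,y),y,u)$ ``whenever $H(\xi_\star,y)=x$'' and conclude that $H(\xi_\star,y)$ obeys the ODE $\dot\zeta=f_x(\zeta,y,u)$ --- but that substitution is valid only on the set whose invariance you are trying to establish, so the ``same ODE, same initial condition'' conclusion presupposes what it proves. The repair is short: set $e(t):=H(\xi_\star(t),y(t))-x(t)$, use the chain rule with the \emph{true} $\dot y=f_y(x,y,u)$, and apply \eqref{pde:correctness} at $(\xi_\star,y)$ to obtain
\[
\dot e \;=\; \big[f_x(x+e,y,u)-f_x(x,y,u)\big]\;-\;{\partial H\over \partial y}(\xi_\star,y)\,\big[f_y(x+e,y,u)-f_y(x,y,u)\big].
\]
Viewing $(x(t),y(t),u(t),\xi_\star(t))$ as known signals, this is a nonautonomous ODE in $e$ whose right-hand side is locally Lipschitz and vanishes at $e=0$; since $e(0)=0$ by the choice of $\xi_\star(0)$, uniqueness of solutions forces $e\equiv 0$. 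Equivalently, $x(t)$ and $H(\xi_\star(t),y(t))$ both solve this \emph{auxiliary} ODE, not the ODE $\dot\zeta=f_x(\zeta,y,u)$ you named. With this one-line fix your argument is complete and coincides with the paper's proof (which in fact asserts this step without detail, so you are attempting strictly more rigor than the original).
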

\begin{proof}
We only show the asymptotic case. For any $(u,y)$, invoking the invariance of \eqref{observer:general} in $\cala$ and the assumption that the image space of $H(\cdot,y)$ is the entire $\rea^{n_x}$, then $\forall x(0)\in\rea^{n_x}$ we can always find a point $\xi_\star(0)\in \cala$ such that
\begequ
\label{IC1}
H(\xi_\star(0),y(0))=x(0).
\endequ
Denote the solution of \eqref{observer:general} as $\xi_\star(t)$ from the initial condition $\xi_\star(0)$. The correctness \eqref{pde:correctness} guarantees 
$$
H(\xi_\star(t),y(t)) = x(t), \quad \forall t\ge 0,
$$
if the initial conditions satisfy \eqref{IC1}.

Since the system \eqref{observer:general} is asymptotically contracting and $\xi_\star(t)$ is a particular solution, we have that for any solution $\xi(t)$
\begequ\label{xi_conv}
\lim_{t\to\infty} |\xi(t) -\xi_\star(t)| =0.
\endequ

Invoking the state boundedness assumption, we, without loss of generality, denote $x \in \Omega_x \subset \rea^{n_x},~ y \in \Omega_y \subset \rea^{n_y}$ and $\xi_\star \in \Omega_\xi \subset \cala$ with $\im_{(\xi,y)\in \Omega_\xi\times \Omega_y}[H(\xi,y)] = \Omega_x$, and all these sets are defined bounded. There always exists a moment $T_1$ such that $\xi(t) \in B_\epsilon(\mbox{cl}(\Omega_\xi))$ for $t\ge T_1$ with a small $\epsilon>0$ due to the convergence \eqref{xi_conv} and $\xi_\star(t) \in \Omega_\xi$ for all $t\ge 0$. Then, for $t\in [T_1,\infty)$ we have
$$
\begin{aligned}
	\lim_{t\to\infty}|\hat x(t)-x(t)| & = \limsup_{t\to\infty}| H(\xi,y) - H(\xi_\star,y)|\\
	& \le \lim_{t\to\infty} \underset{\tiny \xi \in B_\epsilon(\mbox{cl}(\Omega_\xi))}{ \max}|\nabla_\xi H ||\xi(t)-\xi_\star(t) | \\
	& = 0,
\end{aligned}
$$
where we have used the state boundedness assumption, implying that $|\nabla_\xi H |$ is bounded in the closure of $\Omega_\xi$.
\end{proof}

In the sequel, we focus on the constructive solution and the convex representations to Problem 2, in this way solving Problem 1.

\subsection{General results for the case $n_\xi = n_x$}
\label{sec:3b}

We start with the basic case selecting the observer dimension $n_\xi = n_x$. In Problem 2, it is shown that we may consider this problem expressing the dynamics \eqref{eq:syst} in a transformed coordinate, and then study its contraction property by regarding $(u,y)$ as exogenous signals. Indeed, it is well known that the change of coordinate plays an important role in nonlinear observer design.

Let $\phi: \rea^{n_x} \times \rea^{n_y} \to \rea^{n_x}$ define the transformation
$$
(x,y) \mapsto (z,y) := (\phi(x,y),y).
$$
If we guarantee that $\phi(\cdot,y)$ is bijective from $\rea^{n_x}$ to $\rea^{n_x}$, then for each $y$ there is a left inverse $\phil: \rea^{n_x}\times \rea^{n_y} \to \rea^{n_x}$ such that
\begequ
\label{left-inv}
\phil(\phi(x,y), y) =x.
\endequ

Now the dynamics in the $z$-coordinate is written as 
\begin{equation}
\label{z}
\dot z = \beta(z,y,u)
\end{equation}
with
\begin{align}
\beta(z,y,u) & := f_z(\phil(z,y),y,u) \label{beta-fz}
\\
f_z(x,y,u) & ~ = \Phi_x(x,y) f_x(x,y,u)+ \Phi_y(x,y) f_y(x,y,u). \nonumber
\end{align}
and
\begin{equation}
\label{ExEy}
\Phi_x(x,y):= {\partial \phi \over \partial x}(x,y),~
\Phi_y(x,y):= {\partial \phi \over \partial y}(x,y).
\end{equation}
For convenience, we also use the notation $\Phi(\chi):= \nabla\phi^\top(\chi) = [\Phi_x(\chi), \Phi_y(\chi)]$. Supposing we are able to find a transformation $\phi(x,y)$ as described above such that the obtained system \eqref{z} is asymptotically contracting for any exogenous time-varying signals $(u,y)$, then it is trivial to solve Problem 2 by selecting $N(\cdot)=\beta(\cdot)$ and $H(\cdot) = \phil(\cdot)$. 

Based on the above intuitive idea, we are now in position to present the first main result of the paper on contracting reduced-order observers. Its convex representation is given in Section \ref{sec:3c}, and the extension to the case $n_\xi > n_x$ will be discussed in Section \ref{sec:3d}.

\begin{theorem}
\label{thm:reduced}\rm
Consider the system \eqref{eq:syst}, and assume that we can find mapping $\phi: \rea^{n_\chi} \to \rea^{n_x}$ and uniformly bounded Riemannian metric $M: \rea^{n_x} \to \rea^{n_x \times n_x}_{\succ 0}$ satisfying
\begin{itemize}
  \item[\bf A1] ({\em diffeomorphism}) the mapping $\phi_y(\cdot):=\phi(\cdot,y)$ defines a diffeomorphism uniformly in $y \in \rea^{n_y}$;
   \item[\bf A2] ({\em contraction})
  \begequ
  \label{cond:a2}
  \partial_{f_z} M \Big|_{\phi(\chi)}
  + M{\partial f_z \over \partial x}  \left[ {\partial \phi \over \partial x} \right]^{-1}
  +  \left[ {\partial \phi \over \partial x}\right]^{-\top} {\partial f_z \over \partial x}^\top M \prec 0
  \endequ
  (or $\preceq -2\lambda M,~\lambda>0$ for the exponential case) with
   $f_z(\cdot)$ defined in \eqref{beta-fz}, holds $\forall (\chi,u) \in \rea^{n_\chi} \times \mathcal{U}$.
\end{itemize}
Then, there exists an $n_x$-dimensional reduced-order observer guaranteeing \eqref{conv:general} globally.
\hfill$\triangleleft$
\end{theorem}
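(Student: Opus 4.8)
The plan is to make the construction sketched just before the theorem fully rigorous and then appeal to Proposition~\ref{prop:translation}. Concretely, I would set $N(\xi,y,u):=\beta(\xi,y,u)$ and $H(\xi,y):=\phil(\xi,y)$, so that the proposed observer is exactly the $z$-dynamics \eqref{z} written in the observer state $\xi=z$, with its output read off through the (uniform-in-$y$) inverse of $\phi$. The whole argument then reduces to verifying the two requirements C1 and C2 of Problem~2 for this choice, after which Proposition~\ref{prop:translation} delivers \eqref{conv:general} globally.

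For C2 (correctness) I would differentiate the defining identity $\phil(\phi(x,y),y)=x$ once in $x$ and once in $y$, which by the chain rule and A1 yields the two relations ${\partial \phil/\partial z}=\Phi_x^{-1}$ and ${\partial \phil/\partial y}=-\Phi_x^{-1}\Phi_y$ at the matching point. Substituting these together with $\beta=f_z=\Phi_x f_x+\Phi_y f_y$ into the left-hand side of the PDE \eqref{pde:correctness} collapses it to $f_x(H(\xi,y),y,u)$, so the PDE holds by construction. The surjectivity/invariance requirement $\im_{\xi\in\cala}[H(\xi,y)]=\rea^{n_x}$ is then immediate: since A1 makes $\phi(\cdot,y)$ a diffeomorphism onto $\rea^{n_x}$, its inverse $\phil(\cdot,y)$ is onto, and one may simply take $\cala=\rea^{n_x}$, which is trivially forward invariant.

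For C1 (contraction) the key observation is that A2 is nothing but the standard metric condition \eqref{contraction} applied to \eqref{z}. I would compute ${\partial\beta/\partial z}={\partial f_z/\partial x}\,\Phi_x^{-1}$ from \eqref{beta-fz}, note that for a metric $M=M(z)$ the derivative along solutions is $\dot M=\partial_{f_z}M$ evaluated at $z=\phi(\chi)$, and then read off that \eqref{contraction} becomes exactly \eqref{cond:a2}. Because A1 guarantees $\phi(\cdot,y)$ is onto for every $y$, letting $\chi$ range over $\rea^{n_\chi}$ makes $(z,y)$ range over all of $\rea^{n_x}\times\rea^{n_y}$, so A2 holding for all $(\chi,u)$ means the contraction inequality holds for every exogenous $(y,u)$; with $M$ uniformly bounded this is precisely (asymptotic or exponential) contraction in the sense of Definition~\ref{def:contraction}, giving C1.

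The main obstacle, and the part needing most care, is the passage from contraction in the $\xi$- (i.e. $z$-) coordinate back to convergence of $\hat x=H(\xi,y)$ to $x$ in the original coordinate. This is exactly what Proposition~\ref{prop:translation} handles, but it relies on controlling $|\nabla_\xi H|=|\Phi_x^{-1}|$ along trajectories; the uniformity in A1 (boundedness of the Jacobian of $\phi$ and its inverse, uniformly in $y$) is what prevents the inverse map from undoing the contraction and is therefore the crux of the global claim. A secondary point to get right is bookkeeping of the evaluation points, since every Jacobian and every instance of $M$ in \eqref{cond:a2} must be taken at $x=\phil(z,y)$, i.e. at $z=\phi(\chi)$, so that the directional-derivative term genuinely matches $\dot M$ along the flow.
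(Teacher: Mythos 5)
Your construction and your verification of \textbf{C1}--\textbf{C2} coincide with the paper's own proof: you take $N=\beta$, $H=\phil$, check the correctness PDE \eqref{pde:correctness} via the chain-rule identities ${\partial \phil / \partial z}=\Phi_x^{-1}$ and ${\partial \phil /\partial y}=-\Phi_x^{-1}\Phi_y$, and observe that \textbf{A2} is exactly the Lohmiller--Slotine condition \eqref{contraction} written for the $z$-dynamics \eqref{z}, with surjectivity of $\phi(\cdot,y)$ ensuring the inequality covers all $(z,y,u)$. All of that is sound.

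The gap is in your final step. Proposition~\ref{prop:translation} is stated and proved under the extra hypothesis that the plant states of \eqref{eq:syst} are \emph{bounded}: its proof confines the particular solution $\xi_\star$ to a bounded set $\Omega_\xi$ and uses compactness of $B_\epsilon(\mbox{cl}(\Omega_\xi))$ to bound $|\nabla_\xi H|$ there; that control of $|\Phi_x^{-1}|$ comes from state boundedness, not from \textbf{A1}. Theorem~\ref{thm:reduced} assumes no such boundedness and claims \eqref{conv:general} globally, so the reduction ``\textbf{C1} $+$ \textbf{C2} $+$ Proposition~\ref{prop:translation}'' proves a strictly weaker statement than the theorem. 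You sense this in your closing paragraph, but you resolve it by assertion (``the uniformity in A1 \ldots prevents the inverse map from undoing the contraction'') rather than by argument: nothing you write actually converts $|\xi(t)-z(t)|\to 0$ into $|\hat x(t)-x(t)|\to 0$ without compactness. The paper's proof never invokes Proposition~\ref{prop:translation}; instead it connects $\xi(0)$ and $z(0)$ by a curve $\gamma$, propagates the displacement $\delta\xi = {\partial \xi(t;\gamma(s),y,u) / \partial s}$, shows $\dot V<0$ for $V=\delta\xi^\top M(\xi)\,\delta\xi$ using \eqref{cond:a2}, and integrates over $s\in[0,1]$ to conclude $|\xi(t)-z(t)|\to 0$ from the uniform boundedness of $M$ alone, delegating only the passage back to the $x$-coordinate to the uniformity in \textbf{A1}. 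To repair your proposal you must either add the state-boundedness hypothesis (thereby weakening the theorem) or replace the appeal to Proposition~\ref{prop:translation} with a direct contraction argument of this kind.
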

\begin{proof}
The assumption {\bf A1} defines a global diffeomorphism, and we thus, according to the global inverse theorem \cite{MIL}, obtain that
\begequ
\label{inverse}
{\partial \phi \over \partial x}(x,y)
=
\left[{\partial \phil\over \partial z}(z,y)\right]^{-1}_{z=\phi(\chi)}.
\endequ

The dynamics of $z$ can be written as \eqref{z}. Now design an observer \eqref{observer:general} by selecting
\begin{equation}
\label{observer:mapping}
\begin{aligned}
N(\xi,y,u)  = f_z(\phil(\xi,y),y,u), ~
H(\xi,y)  = \phil(\xi,y).
\end{aligned}
\end{equation}
The correctness condition {\bf C2} can be verified trivially from the above construction. Therefore, $z(t)=\phi(x(t),y(t))$ is a particular solution of the observer dynamics from $z(0)$.

Since $\phi(\cdot,y)$ is a diffeomorphism for any $y$ due to {\bf A1}, in the sequel we only need to show the convergence of $\xi$ to $z$. The dynamics of them are given by
$$
\dot \xi = N(\xi,y,u), \quad \dot z= N(z,y,u),
$$
where $(u,y)$ are viewed as (available) exogenous signals. Towards this end, we first need to calculate the differential characterization along the solution of \eqref{observer:general} with $y(t)$ governed by the system \eqref{eq:syst}. Note that the solution $y(t)= Y(t;\chi_0,u)$ is an exogenous signal, which is independent of $\xi(t)$. Consider the initial conditions $z(0)$ and $\xi(0)$ are connected by a smooth curve $\gamma:[0,1]\to \rea^{n_x}$, such that $\gamma(0)=z(0)$ and $\gamma(1)=\xi(0)$. The solution from $\gamma(s)$, $s\in[0,1]$ under the exogenous signals $(u,y)$ is denoted as $\xi(t;\gamma(s),y,u)$. Thus, we have the differential dynamics
$$
\begin{aligned}
 {d\over dt} \delta\xi & = {\partial N \over \partial \xi}(\xi,y,u) \delta \xi
 \\
 & = {\partial f_z \over \partial \xi}(\phil(\xi,y),y,u)\delta \xi
 \\
 & \overset{}{=}~ {\partial f_z \over \partial x}(\phil(\xi,y),y,u) {\partial \phil \over \partial z}(\xi,y)  \delta\xi \\
  &  \overset{\eqref{inverse}}{=} {\partial f_z \over \partial x}(\hat x,y,u)   \left[ {\partial \phi \over \partial x}(\hat x,y) \right]^{-1} \delta \xi,
\end{aligned}
$$
with the infinitesimal displacement $\delta\xi = {\partial \xi(t;\gamma(s),y,u) \over \partial s}$.

Define a differential Lyapunov function as
$
V(\xi,\delta \xi) =\delta \xi^\top  M(\xi) \delta \xi,
$
yielding
$$
\begin{aligned}
\dot{\aoverbrace[L1R]{  V(\xi,\delta\xi)}{}}
 & =
 \delta \xi^\top \left(
 \partial_{f_z} M
  + 2{\tt sym}\left(M{\partial f_z \over \partial x}  \left( {\partial \phi \over \partial x} \right)^{-1} \right)  \right)\delta\xi
  \\
& \overset{\eqref{cond:a2}}{<} 0
\end{aligned}
$$
by replacing $x$ with $\hat x= \phil(\xi,y)$, and thus we have
$
\lim_{t\to\infty} V(\xi(t),\delta\xi(t)) = 0.
$
Therefore, we obtain
$$
\begin{aligned}
 &\limsup_{t\to\infty}|\xi(t) - z(t)|
\\
&\le  \lim_{t\to\infty} c\int_0^1 V(\xi(t;\gamma(s),y,u),\delta \xi(t;\gamma(s),y,u) )ds 
=0,
\end{aligned}
$$
for some $c>0$. It completes the proof.
\end{proof}

\begin{remark}
We adopt in the assumption {\bf A2} a Riemannian metric $\delta\xi^\top M(\xi)\delta \xi$ to design an observer, which is very general for systems living in Euclidean space under some smoothness assumption. It can be generalized using a Finsler metric $F(\xi,\delta\xi)$ \cite{FORSEP}, then guaranteeing the induced distance monotonically decreasing. It is particularly useful if system states live in Riemannian manifolds, {\em e.g.}, matrix Lie groups.
\end{remark}

\begin{remark}
It is underlined here that the sufficient condition given in Theorem \ref{thm:reduced} only relies on the existence of the mapping $\phi(x,y)$, though the left inverse $\phil$ is needed when realizing the design. Assuming that we have already found $\phi(x,y)$, its left inverse is given by the solution of the optimization
$
\phil(\xi,y) = \mbox{argmin}_{\hat x \in \rea^{n_x}}  | \phi(\hat x,y) -\xi |^2,
$
which may be implemented by off-line searching for analytic solutions, or on-line searching for numerical results, see \cite{KREENG} for an explicit formula.
\end{remark}

In the above analysis, since we have used the inverse function theorem, the correctness condition {\bf C2} is satisfied automatically. Some further discussions about Theorem \ref{thm:reduced} will be given in Section \ref{sec4}.

\subsection{Convex sets of nonlinear contracting observers}
\label{sec:3c}


The sufficient condition---Assumptions {\bf A1} and {\bf A2}---in Theorem \ref{thm:reduced} is clearly non-convex in general, since it depends in a nonlinear way on the left inverse of the unknown function $\phi(x,y)$. In this subsection, we will present some \emph{convex representations}, which are motivated by, and extend to the reduced-order case, our previous results in \cite{MAN}. Indeed, searching for $\phi(\cdot)$ and $M(\cdot)$ can be viewed as an optimization problem on an infinite-dimensional function space. If we could obtain convex sufficient conditions, one may verify these differential equations and inequalities using the sum-of-square relaxation, thus making the observer design numerically tractable.

The condition {\bf A1} needs to check the non-convex function $\det\{\nabla_x \phi\}$. We have its convex approximation below.

\begin{lemma}
\label{lem:a1-cv}
	The convex condition {\bf H1} is sufficient to {\bf A1}.
\begin{itemize}
\item[{\bf H1}] ({\em the monotone condition}) For any $y \in \rea^{n_y}$,
  \begequ
  \label{ineq-monotone}
   {\partial \phi \over \partial x}(x,y) + \left[ {\partial \phi \over \partial x}(x,y) \right]^\top \succeq k I_{n_x} , \quad k>0.
   \endequ
\end{itemize}
\end{lemma}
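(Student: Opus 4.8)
The plan is to establish the three ingredients of a global diffeomorphism---nonsingularity of the Jacobian, injectivity, and surjectivity onto $\rea^{n_x}$---each as a consequence of the uniform lower bound in {\bf H1}, and then to assemble them via a global inverse function argument. Throughout I fix $y$ and write $F(x):=\phi(x,y)$, noting that the constant $k$ in {\bf H1} does not depend on $y$, so every estimate below holds uniformly in $y$, which is exactly what {\bf A1} requires.

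First I would observe that {\bf H1} forces ${\partial \phi \over \partial x}(x,y)$ to be nonsingular at every $x$: for any nonzero $v$,
$$
v^\top {\partial \phi \over \partial x} v = \hal\, v^\top \left( {\partial \phi \over \partial x} + {\partial \phi \over \partial x}^\top \right) v \ge \tfrac{k}{2}|v|^2 > 0,
$$
since the antisymmetric part of a matrix contributes nothing to the associated quadratic form. Hence no nonzero vector lies in the kernel, and by the inverse function theorem $F$ is a local diffeomorphism.

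The key step is to upgrade this to strong monotonicity, which yields both injectivity and coercivity. Writing $F(a)-F(b)=\int_0^1 {\partial \phi \over \partial x}(b+s(a-b),y)\,(a-b)\,ds$ and applying {\bf H1} inside the integral gives
$$
\big(F(a)-F(b)\big)^\top (a-b) \ge \tfrac{k}{2}\,|a-b|^2 , \quad \forall a,b .
$$
Injectivity is immediate, since $F(a)=F(b)$ forces $a=b$. Taking $b=0$ and using Cauchy--Schwarz yields $|F(a)-F(0)|\ge \tfrac{k}{2}|a|$, so $|F(a)|\ge \tfrac{k}{2}|a|-|F(0)|\to\infty$ as $|a|\to\infty$; equivalently $F$ is proper.

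Finally I would conclude surjectivity and thereby {\bf A1}. A $C^1$ local diffeomorphism of $\rea^{n_x}$ that is proper is a global diffeomorphism onto $\rea^{n_x}$: its image is open (a local diffeomorphism is an open map) and closed (a proper map into the locally compact Hausdorff space $\rea^{n_x}$ is closed), hence all of $\rea^{n_x}$ by connectedness---this is precisely Hadamard's global inverse theorem. Combined with the injectivity established above, $F=\phi(\cdot,y)$ is a global diffeomorphism, uniformly in $y$ because $k$ is common to all $y$. I expect this last step to be the main obstacle: nonsingularity and injectivity are elementary pointwise consequences of {\bf H1}, whereas passing from an injective local diffeomorphism to an \emph{onto} map is an intrinsically global (topological) argument and cannot be obtained from a pointwise computation alone. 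As a remark on the word ``convex'' in the statement, {\bf H1} is an affine matrix inequality in the unknown $\phi$ through its derivative, hence a convex constraint, which is what renders it amenable to the sum-of-squares relaxation used in the sequel---unlike the original condition {\bf A1}, which involves the non-convex test $\det\{\nabla_x\phi\}\neq 0$.
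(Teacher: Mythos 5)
Your proof is correct and follows essentially the same route as the paper, whose own proof consists of noting the convexity (linearity of the partial-derivative operator in $\phi$) and then citing \cite[Theorem 1]{TOBetal} or Hadamard's global inverse theorem \cite[Theorem 2]{MIL} for exactly the conclusion you establish. Your argument is simply a self-contained rendering of the cited result: strong monotonicity from {\bf H1} yields nonsingularity, injectivity and properness, and the open-plus-closed image argument yields surjectivity, which is precisely the content of the Hadamard-type theorem the paper invokes.
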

\vspace{0.3cm}
\begin{proof}
Since the partial derivative operator is linear, the inequality \eqref{ineq-monotone} is convex in $\phi(x,y)$. Following \cite[Theorem 1]{TOBetal} or the Hadamard's global inverse theorem \cite[Theorem 2]{MIL}, we can verify the diffeomorphism of $\phi_y(\cdot)=\phi(\cdot,y)$.
\end{proof}

Apart from the convexity of {\bf H1}, another benefit of the monotonicity we imposed relies on the existing efficient algorithms to calculate the inverse mapping $\phil$ when implementing the observer, {\em e.g.}, the Newton method and the ellipsoid method \cite{TOBetal,NEM}.

Now we move forward to the condition {\bf A2} and study its convex representation. Note that the contraction analysis was done in the transformed coordinate $z=\phi(\chi)$, in which the dynamics is given in \eqref{beta-fz}. Its second equation $f_z(\chi,u) = \Phi(\chi) f(\chi,u)$ shows that $f_z(\cdot)$ is in the ``image space'' of a given vector field $f(\chi,u)$. An important point is that the relationship between $\phi(\cdot)$ and $f_z(\cdot)$ is linear. If we regard $f_z(\cdot)$ as a \emph{new decision variable}, and jointly search for the transformation $\phi(\chi)$ and the mapping $f_z(\chi,u)$, we may be able to obtain a convex set of reduced-order contracting observers. 

In terms of the new decision variable $f_z(\cdot)$ involved, we additionally---compared with {\bf A2}---require the following correctness condition, which is a variant of {\bf C2} in our design framework. It is a linear equality constraint with respect to $\phi(\chi)$ and $f_z(\chi,u)$, thus convex.

\begin{itemize}
    \item[\bf H2] ({\em the modified correctness})
\begin{equation}
\label{correctness}
\Phi_x(\chi)f_x(\chi,u) + \Phi_y(\chi) f_y(\chi,u) = f_z(\chi,u)
\end{equation}
for all $(\chi,u) \in \rea^{n_\chi} \times \rea^{n_u}$.
\end{itemize}

Then, regarding the convex representation of the contraction condition {\bf A2}, we have the following.

\begin{lemma}
\label{lem:a2-cvx}
Under {\bf H1}, the following convex conditions {\bf H3} or {\bf H4}, together with {\bf H2}, imply the condition {\bf A2}, {\em i.e.},
$$
\Big[~\mbox{\bf H2}\wedge (\mbox{\bf H3} \vee\mbox{\bf H4})~\Big]
\quad \implies \quad 
\mbox{\bf A2}.
$$
\begin{itemize}
\item[\bf H3] By restricting the transformation
\begin{equation}
\label{phi:part_l}
\phi(x,y) = Px +  \varphi(y)
\end{equation}
with a smooth mapping $\varphi: \rea^{n_y} \to \rea^{n_x}$, then search for $P\succ 0$, $\varphi(\cdot)$, $f_z(\cdot)$ and $Q$ verifying
\begin{equation}
\label{ct1}
F(x,y,u) + F^\top(x,y,u) + Q \preceq 0,
\end{equation}
with
$
F(x,y,u) := {\partial f_z \over \partial x}( x,y,u),
$ and $Q \succ 0$ (or $Q=2\lambda P$ for the exponential case with $\lambda>0$).
\item[\bf H4] Find $\phi(x,y)$, $Q$, $f_z(\cdot)$, metric $P\succ 0$ and $r>0$ verifying
\begin{equation}
\label{ct2}
\begmat{2\mathtt{sym}(\Phi_x-{r\over2}F) -P-rQ & \Phi_x + {r\over2}F \\ (\Phi_x+{r\over2}F)^\top & P}
\succeq 0,
\end{equation}
with $Q \succ 0$ (or $Q=2\lambda \Phi_x^\top P^{-1} \Phi_x$ for contraction). \qed
\end{itemize}
\end{lemma}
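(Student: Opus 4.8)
\emph{Proof plan.} I would split the argument along the two alternatives \textbf{H3} and \textbf{H4}, but first record two facts used in both. The monotone condition \textbf{H1}, \eqref{ineq-monotone}, forces $\Phi_x+\Phi_x^\top\succeq kI\succ0$, so $\Phi_x=\partial\phi/\partial x$ is everywhere invertible; this is exactly what makes the factor $\Phi_x^{-1}$ in \textbf{A2}, \eqref{cond:a2}, well defined and, more importantly, legitimises pre- and post-multiplying \eqref{cond:a2} by $\Phi_x^\top$ and $\Phi_x$ — a congruence, hence an \emph{equivalence}. Next, \textbf{H2}, \eqref{correctness}, is precisely the linear constraint $f_z=\Phi_x f_x+\Phi_y f_y=\Phi f$, so once it is imposed the free decision variable $f_z$ coincides with the genuine transformed vector field \eqref{beta-fz}, $F:=\partial f_z/\partial x$ is the Jacobian actually appearing in \textbf{A2}, and moreover $f_z=\partial_f\phi$, which links the first derivatives of $\phi$ to $F$ — a relation I will need in the \textbf{H4} branch. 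The common plan is thus: impose \textbf{H2}, clear $\Phi_x^{-1}$ from \eqref{cond:a2} by the congruence above, and then convexify.

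For \textbf{H3} this is immediate. The restriction $\phi(x,y)=Px+\varphi(y)$ makes $\Phi_x\equiv P$ \emph{constant} and symmetric positive definite, so $\Phi_x^{-1}=\Phi_x^{-\top}=P^{-1}$. Choosing the constant metric $M=P^{-1}$ gives $\partial_{f_z}M\equiv0$, and \eqref{cond:a2} collapses to $P^{-1}(F+F^\top)P^{-1}\prec0$, i.e.\ (since $P^{-1}\succ0$) to $F+F^\top\prec0$, which is exactly \eqref{ct1} with the strictifying slack $Q\succ0$. For the exponential variant I would carry $Q=2\lambda P$: then $F+F^\top\preceq-2\lambda P$ yields $P^{-1}(F+F^\top)P^{-1}\preceq-2\lambda P^{-1}=-2\lambda M$, i.e.\ \textbf{A2} with rate $\lambda$.

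For \textbf{H4}, $\phi$ is unrestricted and $\Phi_x$ is a genuinely non-constant (invertible) map, so a constant metric no longer suffices; I would instead use the pullback metric $M=\Phi_x^{-\top}P\,\Phi_x^{-1}$, equivalently the dual metric $W=M^{-1}=\Phi_x P^{-1}\Phi_x^\top\succ0$. Writing contraction of the $z$-dynamics in the dual form and substituting $W$ turns the contraction terms into $\mathtt{sym}(\Phi_x P^{-1}F^\top)$, which is the placement appearing in \eqref{ct2}. The new difficulty is the metric-derivative term $\partial_{f_z}M$ (equivalently $\partial_{f_z}W$): I would reduce it using the correctness relation $f_z=\partial_f\phi$, which expresses the offending derivative of $\Phi_x$ through $F$ and the known Jacobians of $f$, and then introduce the scalar $r>0$ as a Young-type / S-procedure multiplier that dominates the residual. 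Completing the square via the identity $\Phi_x+\Phi_x^\top-P-\Phi_xP^{-1}\Phi_x^\top=-(\Phi_x-P)P^{-1}(\Phi_x-P)^\top$ and then taking a Schur complement with respect to the positive block $P$ repackages the whole inequality as the block LMI \eqref{ct2}; concretely, \eqref{ct2} is equivalent to
\begin{equation*}
2\mathtt{sym}\!\left(\Phi_x-\tfrac{r}{2}F\right)-P-rQ-\left(\Phi_x+\tfrac{r}{2}F\right)P^{-1}\left(\Phi_x+\tfrac{r}{2}F\right)^{\!\top}\succeq 0,
\end{equation*}
from which the contraction certificate is read off, with $Q=2\lambda\,\Phi_x^\top P^{-1}\Phi_x$ supplying the $-2\lambda M$ right-hand side in the exponential case.

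The routine parts are the invertibility/diffeomorphism bookkeeping and the direct \textbf{H3} substitution. I expect the \emph{main obstacle} to be the \textbf{H4} algebra: controlling $\partial_{f_z}M$ without a second-derivative decision variable, verifying that the single scalar $r>0$ genuinely absorbs all the cross and derivative terms \emph{with the correct signs} so that \eqref{cond:a2} holds strictly, and confirming the transpose conventions in the dual-metric identification and the Schur-complement equivalence. One must also check that the resulting $M=\Phi_x^{-\top}P\,\Phi_x^{-1}$ is uniformly bounded and positive definite, as required by the hypotheses of Theorem \ref{thm:reduced}.
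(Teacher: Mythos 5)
Your \textbf{H3} branch is correct and is exactly the paper's argument: with $\phi=Px+\varphi(y)$ one takes the \emph{constant} metric $M=P^{-1}$, so $\partial_{f_z}M\equiv 0$, and \eqref{ct1} passes to \textbf{A2} by congruence with $\Phi_x^{-1}=P^{-1}$, the choice $Q=2\lambda P$ giving the rate-$\lambda$ case.

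The \textbf{H4} branch, however, contains a genuine gap, located exactly where you flagged the ``main obstacle.'' The paper does \emph{not} switch to the pullback metric $M=\Phi_x^{-\top}P\,\Phi_x^{-1}$ in \textbf{H4}; it keeps the constant metric $M=P^{-1}$ there as well, so that no metric-derivative term ever arises. Its chain is purely algebraic: \eqref{ct2} is equivalent by Schur complement (with respect to the block $P\succ 0$) to \eqref{ct2-2}; the identity $\Phi_x^\top P^{-1}F+F^\top P^{-1}\Phi_x=\tfrac1r\big[\,|\Phi_x+\tfrac r2F|^2_{P^{-1}}-|\Phi_x-\tfrac r2F|^2_{P^{-1}}\big]$ together with the Young-type bound $-c^\top P^{-1}c\preceq P-c-c^\top$ (applied to $c=\Phi_x-\tfrac r2F$) turns \eqref{ct2-2} into \eqref{contraction:implicit}, i.e.\ $\Phi_x^\top P^{-1}F+F^\top P^{-1}\Phi_x\preceq -Q$; and \eqref{contraction:implicit} with $Q=2\lambda\Phi_x^\top P^{-1}\Phi_x$ is \emph{precisely} \textbf{A2} with $M=P^{-1}$, after the congruence by $\Phi_x^{-1}$ that \textbf{H1} licenses. (For the identity to apply, the quadratic term must be read as $(\Phi_x+\tfrac r2F)^\top P^{-1}(\Phi_x+\tfrac r2F)$, i.e.\ with the off-diagonal blocks of \eqref{ct2} transposed relative to your literal Schur complement; this transpose convention is the typo-level point you suspected.) Thus $r$ is only a Young's-inequality parameter that convexifies the bilinear product $\Phi_x^\top P^{-1}F$; nothing in \eqref{ct2} is available to dominate a metric-derivative residual, and none is needed.

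Your route, by contrast, cannot be closed from \eqref{ct2}. First, the bookkeeping is inconsistent: with $M=\Phi_x^{-\top}P\,\Phi_x^{-1}$ the congruence by $\Phi_x$ gives $\Phi_x^\top M\Phi_x=P$, so $Q=2\lambda\Phi_x^\top P^{-1}\Phi_x$ does not reproduce the required $-2\lambda M$ unless $P=\Phi_x^\top P^{-1}\Phi_x$. Second, and more fundamentally, carry out your own plan: write the dual condition $AW+WA^\top-\partial_f W\prec 0$ with $W=\Phi_xP^{-1}\Phi_x^\top$ and $A=F\Phi_x^{-1}$, and substitute the correctness-derived relation $\partial_f\Phi_x=F-\Phi_x\frac{\partial f_x}{\partial x}-\Phi_y\frac{\partial f_y}{\partial x}$ (obtained by differentiating \eqref{correctness} in $x$ and using symmetry of second derivatives). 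The terms $FP^{-1}\Phi_x^\top+\Phi_xP^{-1}F^\top$ cancel identically against $\partial_fW$, leaving $2\,\texttt{sym}\big\{\big(\Phi_x\frac{\partial f_x}{\partial x}+\Phi_y\frac{\partial f_y}{\partial x}\big)P^{-1}\Phi_x^\top\big\}\prec 0$, a condition on the original vector field in which $F$ no longer appears and which \eqref{ct2} neither implies nor resembles. This is just coordinate-invariance of contraction: pulling back a constant metric from the $x$-coordinates must yield a condition independent of the choice of $f_z$. Hence no choice of $r>0$ absorbs the residual; the repair is to abandon the pullback metric and run the constant-metric congruence argument above, as the paper does.
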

\begin{proof}
The conditions \eqref{ct1} and \eqref{ct2} are state-dependent linear matrix inequalities (LMIs), which are jointly convex in $\phi$ (or $\varphi$), $f_z,P$ and quasi-convex in $\lambda$ for the exponential case. In {\bf A2}, the function $f_z$ is defined as \eqref{beta-fz}. In the convex representation, we regard $f_z$ as a free mapping to be searched for, which, thus, should satisfy the correctness condition {\bf H2}.

The remainder of the proof is to verify the inequality in {\bf A2}. First, we consider the assumption {\bf H3} with the parameterization $\phi(x,y)=Px+\varphi(y)$, thus
$$
\Phi_x = {\partial \phi\over\partial x}(x,y) = P.
$$
Then, it is straightforward to get\footnote{We give the details of the exponential case, and the asymptotic case follows in a similar way.}
$$
\begin{aligned}
  \eqref{ct1} & ~ \iff F+F^\top \preceq - 2\lambda \Phi_x^\top P^{-1} \Phi_x
  \\
  &~ \iff P^{-1} F \Phi_x^{-1} + \Phi_x^{-\top} F^\top P^{-1} \preceq -2\lambda M
  \\
  & ~\iff \partial_{f_z} P^{-1} + P^{-1} F \Phi_x^{-1} + \Phi_x^{-\top} F^\top P^{-1} \preceq -2\lambda M
  \\
  &\overset{M:= P^{-1}}{\iff} \mbox{\bf A2},
\end{aligned}
$$
verifying the first convex sufficient condition in {\bf H3}.

For the case in {\bf H4}, the inequality \eqref{ct2} is equivalent to
\begin{equation}
\label{ct2-2}
\begin{aligned}
 \left(\Phi_x - {r\over 2} F \right) + \left(\Phi_x - {r\over 2} F\right)^\top
 - \left|\Phi_x + {r\over 2} F\right|^2_{ P^{-1}} \\
 - P - rQ \succeq 0.
\end{aligned}
\end{equation}
As show in \cite[Section III]{TOBetal}, the inequality \eqref{ct2-2} is a sufficient condition to 
\begin{equation}
\label{contraction:implicit}
\begin{aligned}
\Phi_x^\top P^{-1} F + F^\top P^{-1} \Phi_x
\preceq -  Q .
\end{aligned}
\end{equation}
Namely, using the inequalities
$$
\Phi^\top_x M F + F^\top M \Phi_x
=
{1\over r}[ |\Phi_x +{r\over2}F|^2_{M} - |\Phi_x -{r\over2}F|_{M}^2]
$$
and
$
-c^\top P^{-1}c \le b^\top P b - 2b^\top c
$
with $M=P^{-1}$ successively, we get \eqref{contraction:implicit} from \eqref{ct2-2}. It yields
$$
\begin{aligned}
 \eqref{contraction:implicit} & ~\iff
 \Phi_x^\top P^{-1} F + F^\top P^{-1} \Phi_x
\preceq -  2\lambda \Phi_x^\top P^{-1} \Phi_x
\\
& ~ \iff
  P^{-1} F \Phi_x^{-1} + \Phi_x^{-\top} F^\top P^{-1} 
\preceq -  2\lambda  P^{-1}
\\
& ~\iff \partial_{f_z} P^{-1} + P^{-1} F \Phi_x^{-1} + \Phi_x^{-\top} F^\top P^{-1} \preceq -2\lambda M
  \\
  &\overset{M:= P^{-1}}{\iff} \mbox{\bf A2},
\end{aligned}
$$
where in the second equivalence we have used the assumption {\bf H1} to guarantee the existence of the inverse matrix of $\Phi_x$. It completes the proof.
\end{proof}

\begin{remark}\rm
We will show, via examples, that the convex condition {\bf H3} is quite general to cover several physical models. It enjoys another merit of simple left inverse mapping $\phil(\xi,y) = P^{-1}[\xi -\varphi(y)]$. Besides, the condition {\bf H3} can be generalized into the form 
$
\phi(x,y)=P(y)x + \varphi(y),
$
with $P(\cdot)$ dependent on $y$. For this case, \eqref{ct1} becomes the convex constraint
$
F(x,y,u) + F^\top(x,y,u) + \dot P(y) + Q \preceq  0.
$
\end{remark}

To accomplish the reduced-order observer design, we would like to search $\phi(\chi)$ and $f_z(\chi,u)$ simultaneously over finite dimensional space. Thus, we parameterize them as
\begequ
\label{parameterization}
\phi(\chi) = \sum_{i=1}^{q_1} \theta_i \phi_i(\chi),
\quad
f_z(\chi,u) = \sum_{i=q_1+1}^{q_2} \theta_i f_{z,i}(\chi,u)
\endequ
with basis mappings $\phi_i$ and $f_{z,i}$ ($i=1,\ldots,q_2$) smooth enough, and $q_1,q_2 \in \mathbb{N}_+$ verifying $q_1 <q_2$. The role of parameterization is to reformulate the obtained infinite dimensional optimization over function space into a finite dimensional convex problem, which is efficiently solvable.

We are in position to present the second main result of the paper about a convex set of contracting reduced-order observers, which is a \emph{constructive} solution to Problem 2.

\begin{theorem}
\label{thm:convex-observer}\rm
Given the nonlinear system \eqref{eq:syst}, for the case $n_\xi =n_x$ a convex set of contracting reduced-order observers is given by \eqref{parameterization} together with the observer dynamics
\begin{equation}
\label{convex_set_ob}
\begin{aligned}
\dot{\xi}  = f_z(\hat x, y,u), \quad
\hat x  = \phil(\xi, y)
\end{aligned}
\end{equation}
with $\phil(\cdot,y)$ the left inverse of $\phi(\cdot,y)$ solving Problem 2, where $\theta \in \Theta$ is defined by the convex constraints {\bf H1}, {\bf H2}, and either {\bf H3} or {\bf H4}.
\end{theorem}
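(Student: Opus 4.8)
The plan is to recognize that this theorem is essentially a repackaging of the three preceding results---Theorem~\ref{thm:reduced}, Lemma~\ref{lem:a1-cv}, and Lemma~\ref{lem:a2-cvx}---so the proof should split into two independent verifications: (i) that the feasible set $\Theta$ is convex in the coefficient vector $\theta$, and (ii) that every $\theta\in\Theta$ produces, through \eqref{convex_set_ob}, an observer that solves Problem~2 and hence, by Proposition~\ref{prop:translation}, Problem~1. Before starting I would fix the choice between \textbf{H3} and \textbf{H4} (the ``either/or'' being a design decision, since the set-theoretic union of two LMI feasible sets need not be convex), so that $\Theta$ becomes a plain intersection of convex constraints.

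For the convexity claim, the key observation is that the parameterization \eqref{parameterization} is \emph{linear} in $\theta$, and therefore so are the induced quantities $\Phi_x$, $\Phi_y$ and $f_z$. I would then argue constraint by constraint: the monotone condition \textbf{H1} is a state-dependent LMI affine in $\phi$, hence affine in $\theta$; the modified correctness \textbf{H2} in \eqref{correctness} is a linear equality in $(\Phi_x,\Phi_y,f_z)$, hence an affine (thus convex) equality in $\theta$; and \textbf{H3}, \textbf{H4} were already shown in Lemma~\ref{lem:a2-cvx} to be LMIs jointly convex in the decision variables and quasi-convex in $\lambda$. Since each constraint is convex in $\theta$ and a finite intersection of convex sets is convex, $\Theta$ is convex.

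For the validity claim I would chain the implications already in hand: Lemma~\ref{lem:a1-cv} gives that \textbf{H1} implies \textbf{A1}, and Lemma~\ref{lem:a2-cvx} gives that \textbf{H1}, \textbf{H2}, together with either \textbf{H3} or \textbf{H4}, imply \textbf{A2}, so any $\theta\in\Theta$ satisfies both hypotheses of Theorem~\ref{thm:reduced}, which in particular furnishes a global diffeomorphism $\phi_y$ and hence $\cala=\rea^{n_x}$ in \textbf{C2}. It then remains to check that \eqref{convex_set_ob} is the very object Theorem~\ref{thm:reduced} constructs: writing $\hat x=\phil(\xi,y)=H(\xi,y)$, we have $\dot\xi=f_z(\hat x,y,u)=f_z(\phil(\xi,y),y,u)=N(\xi,y,u)$, matching \eqref{observer:mapping}. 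The one genuinely new verification---because here $f_z$ is a \emph{free} decision variable rather than the honest transformed field \eqref{beta-fz}---is that \textbf{H2} still delivers correctness. I would confirm this by exhibiting $\xi_\star(t)=\phi(x(t),y(t))$ as a particular solution: differentiating gives $\dot\xi_\star=\Phi_x f_x+\Phi_y f_y=f_z(\chi,u)$ by \textbf{H2}, while the left-inverse identity \eqref{left-inv} gives $f_z(\phil(\xi_\star,y),y,u)=f_z(\phil(\phi(x,y),y),y,u)=f_z(\chi,u)$, so the two coincide and $\phil(\xi_\star,y)=x$ along solutions.

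I expect the main obstacle to be conceptual rather than computational: making sure that decoupling $f_z$ from $\phi$---treating it as an independent decision variable purely to buy convexity---does not silently sever the correctness link, which is precisely why \textbf{H2} must be imposed and why its role here differs from the automatic correctness available in Theorem~\ref{thm:reduced}. A secondary point worth stating with care is the exact meaning of ``convex set of observers'': convexity holds only after committing to one of \textbf{H3} or \textbf{H4}, and is preserved solely because \eqref{parameterization} enters linearly---had the basis coefficients appeared nonlinearly, the LMIs of \textbf{H3}/\textbf{H4} would cease to be convex in $\theta$.
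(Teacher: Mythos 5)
Your proposal is correct and follows essentially the same route as the paper: the paper's own proof is a two-line appeal to Theorem~\ref{thm:reduced} together with Lemmata~\ref{lem:a1-cv} and \ref{lem:a2-cvx}, exactly the chain of implications you assemble. Your additional verifications---that convexity in $\theta$ rests on the linearity of \eqref{parameterization} and requires committing to one of \textbf{H3}/\textbf{H4}, and that \textbf{H2} forces the free decision variable $f_z$ to coincide with the transformed field \eqref{beta-fz} so that $\xi_\star(t)=\phi(x(t),y(t))$ remains a particular solution---are details the paper leaves implicit, and they make the argument more self-contained without changing its structure.
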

\begin{proof}
As already shown above, the condition {\bf H1} guarantees the existence of inverse mapping $\phil(\cdot)$ globally. Following Theorem \ref{thm:reduced} and Lemmata \ref{lem:a1-cv}-\ref{lem:a2-cvx}, we get the claim.
\end{proof}

\subsection{Extension to the immersion case $n_\xi >n_x$}
\label{sec:3d}

In the previous subsections, we considered the case that the observer dynamics shares the dimension of the systems state. It, however, is openly recognized that by considering injective immersion may allow to take into account a larger class of nonlinear control systems, being widely used in observer design with various purposes.

In this subsection, we extend our results to the case $n_\xi > n_x$. The motivation is threefold.
\begin{itemize}
    \item Sometimes there exists observable singularity for the system to be estimated. By increasing observer dimensions, we may be able to guarantee the injectivity of changes of coordinates, see \cite{RAPMAL,ANDPRA} and \cite[Chapter 4]{BES}. 
    \item New mappings will appear in \eqref{correctness}, the new degree of freedom from which relaxes the correctness condition.
    \item For a given nonlinear system, it may provide a strictly larger set, compared to the case of $n_\xi = n_x$, of convergent observers, among which we may select an optimal design in some sense, in this way enhancing performance.\footnote{A similar problem---parameterizing all convergent observers for linear time-invariant systems---was comprehensively studied in \cite{GOOMID}. }
\end{itemize}

Following the idea in Section \ref{sec:3a}, we would like to find $\phi: \rea^{n_x} \times \rea^{n_y} \to \rea^{n_\xi}$ with $n_\xi > n_x$. However, two difficulties arise, namely, (i) $\nabla_x \phi(x,y)$ is not square, thus the inverse function theorem not applicable; and (ii) the image space $\cali_\xi$ of $\phi(\cdot,y)$ may be only an \emph{open subset} of $\rea^{n_\xi}$, and thus the left inverse $\phil(\cdot,y)$ is not defined globally. A possible approach to deal with this issue is to construct an extended left mapping $\phile: \rea^{n_\xi} \times \rea^{n_y}$ satisfying
$
\phile(z,y) = \phil(z,y), ~\forall (z,y) \in \cali_\xi\times \rea^{n_y}.
$
It is usually applied to the case in which the transformation $\phi(\cdot)$ has been \emph{already known}. However, it is not suitable in our situation, since our target is to find the mapping $\phi(\cdot)$.

In order to be able to keep consistent with the proposed constructive method, as well as to circumvent the difficulties above, we introduce an \emph{augmentation} of the $x$-coordinate into the extended system state $(x,w)$ given by
\begin{equation}
\label{dotw}
    \dot w = f_w (x,w,y,u)
\end{equation}
with $w \in \rea^{n_w}$, $n_w=n_\xi - n_x$ and $x_e := \col(x,w)$.\footnote{In \cite{BERetalsiam}, complementary operation of full column-rank Jacobian is used to express an observer in the preferred coordinate assuming that an observer \emph{exists already}.} The following lemma indicates that the augmented state $w$ should not change the ``detectability'' of the system \eqref{eq:syst}.
\begin{lemma}
\label{lem:argument}\rm
Assume that there exists an exponential observer for the system \eqref{eq:syst}.
\begin{itemize}
\item[1)] If the augmented system \eqref{dotw} is contracting, then there exists an observer for the extended state $(x,w)$.
\item[2)] Conversely, for the case that the given system is autonomous ({\em i.e.} $u=0$) with all states bounded, assuming that the $(x,w)$-system has an exponential observer \eqref{convex-observer:argu} with a smooth Lyapunov function $V(x_e,y,\xi)$ such that
$$
\begin{aligned}
a_1|\xi - \phi(x_e,y)|^2 \le V(x_e,y,\xi) & \le a_2 |\xi - \phi(x_e,y)|^2 \\
\dot{\aoverbrace[L1R]{ V(x_e,y,\xi)}}  & \le -\lambda V(x_e,y,\xi)
\end{aligned}
$$
with $a_1,a_2,\lambda>0$, then the system \eqref{dotw} is contracting.
\end{itemize}

\end{lemma}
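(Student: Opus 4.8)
The plan is to prove the two implications separately, exploiting the structure that the augmentation $w$ is a \emph{free} auxiliary coordinate whose dynamics $f_w$ we are at liberty to design. For Part 1), I would argue constructively. By hypothesis there exists an exponential observer for \eqref{eq:syst}, which in particular yields an exponentially convergent estimate $\hat x(t)$ of $x(t)$, and the existence of an observer means we have access (asymptotically) to the full trajectory of $x$. The augmented system \eqref{dotw} being contracting means that for any two solutions of $\dot w = f_w(x,w,y,u)$ driven by the \emph{same} exogenous signals $(x,y,u)$, the difference decays as $\kappa(|w_a-w_b|,t)$ of class $\mathcal{KL}$ (exponentially in the contracting case). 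First I would run a copy of the $w$-dynamics driven by the estimate $\hat x$ rather than the true $x$ to produce an estimate $\hat w$; then I would decompose the error $\hat w - w$ using contraction with respect to the $w$-argument together with a small-gain / robustness argument to account for the mismatch between the true driving signal $x$ and the estimated $\hat x(t)$, which vanishes exponentially. Concatenating $(\hat x,\hat w)$ gives the claimed observer for the extended state $(x,w)$; the key point is that contraction of the $w$-subsystem confers the input-to-state-type robustness needed to absorb the vanishing perturbation from $\hat x \to x$.

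For Part 2), which is the converse under the autonomous, bounded-state setting, I would work directly with the supplied Lyapunov function $V(x_e,y,\xi)$ satisfying the two sandwich and decay inequalities. The goal is to show that the $w$-dynamics \eqref{dotw}, with $(x,y,u=0)$ viewed as exogenous, is contracting. The strategy is to use the observer's Lyapunov function $V$ to manufacture a contraction certificate for the $w$-subsystem. The lower and upper quadratic bounds $a_1|\xi-\phi(x_e,y)|^2 \le V \le a_2|\xi-\phi(x_e,y)|^2$ identify $V$ as comparable to a metric measured through the map $\phi$, and the decay $\dot V \le -\lambda V$ forces the observer error in the $\xi$-coordinate to shrink exponentially. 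Since the system is autonomous with bounded states, the exogenous signals $(x,y)$ remain in a compact set, so one can pull back this exponential contraction in $\xi$ to an exponential contraction of the $w$-coordinate itself: two $w$-trajectories sharing the same $(x,y)$ correspond, via $\phi(x_e,\cdot)$, to two $\xi$-trajectories whose difference is controlled by $V$, and compactness ensures the relevant Jacobians $\nabla_w \phi$ and its generalized inverse are uniformly bounded above and below, so the contraction estimate transfers between coordinates without loss.

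The main obstacle I expect is in Part 2): converting a \emph{Lyapunov} statement about the observer error into an \emph{incremental} (contraction) statement about the plant's augmented subsystem. The subtlety is that $V(x_e,y,\xi)$ certifies convergence of $\xi$ to the particular solution $\phi(x_e(t),y(t))$, whereas contraction of \eqref{dotw} is a statement about convergence between \emph{arbitrary pairs} of $w$-solutions under common exogenous inputs. Bridging this gap rigorously requires that $\phi(\cdot,y)$ restricted to the $w$-fibre be a well-behaved (bi-Lipschitz on compacts) embedding, so that closeness in $\xi$ implies closeness in $w$ and vice versa; this is exactly where the boundedness hypothesis and the two-sided quadratic bounds on $V$ are indispensable, and I would need to make the uniform invertibility of $\nabla_w\phi$ on the relevant compact set explicit. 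Part 1), by contrast, is the more routine direction, reducing essentially to a cascade argument with a vanishing exponential perturbation driving a contracting subsystem.
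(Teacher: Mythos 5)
Your proposal is correct in outline, and the two parts deserve different verdicts. Part 1) is essentially the paper's own argument: the paper also cascades the given exponential observer with a copy $\dot{\hat w} = f_w(\hat x,\hat w,y,u)$ and disposes of the vanishing mismatch $\hat x - x$ via contraction of the $w$-dynamics together with state boundedness and continuity of solutions; your small-gain/ISS phrasing merely makes that robustness step explicit. Part 2), however, takes a genuinely different route. The paper works at the \emph{differential} level: it invokes \cite[Proposition 4]{ANDetal} to convert the assumed Lyapunov function into exponential decay of infinitesimal displacements tangent to the output level sets (i.e., with $\delta y \equiv 0$); since the resulting variational system \eqref{ltv:xw} is block-triangular ($w$ never enters the $x$- or $y$-equations), choosing $\delta x(0)=0$ forces $\delta x(t)\equiv 0$ and isolates $\delta \dot w = \frac{\partial f_w}{\partial w}\delta w$, whose solutions inherit the exponential bound; contraction of \eqref{dotw} then follows using only an \emph{upper} bound on $\partial \phi_e/\partial w$ along bounded trajectories. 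Your argument is instead trajectory-level: pair one observer solution (driven by the common output $y$, which both plant trajectories share because $w$ does not affect $y$) with the two plant trajectories $(x,w_a,y)$ and $(x,w_b,y)$, initialize $\xi(0)=\phi(x(0),w_a(0),y(0))$ so that the sandwich bounds force $\xi(t)\equiv \phi(x(t),w_a(t),y(t))$, and use the decay of $V$ plus the triangle inequality to control $|\phi(x,w_a,y)-\phi(x,w_b,y)|$. This is more elementary and self-contained (no external proposition needed) and exploits the same structural fact as the paper's triangular decomposition, but it costs an extra regularity hypothesis: to pass from closeness of $\phi$-images to closeness of the $w$'s you need a uniform \emph{lower} bound on the injectivity modulus of $\phi(x,\cdot,y)$, i.e., full-rank $\nabla_w \phi$ with uniformly bounded left inverse on the relevant compact set; mere injectivity is not enough (a cube-root-type degeneracy defeats it), whereas the paper's differential route never needs this lower bound. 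You correctly flag this as the point to be made explicit; provided you add that immersion-type assumption, which is consistent with {\bf H1$'$} of Proposition \ref{prop:convex-observer-immersion} and with the existence of the left inverse $\phil$ in \eqref{convex-observer:argu}, your argument closes.
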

\begin{proof}
For the first claim, it can be proved by cascading the existing observer with 
$$
\dot{\hat w} = f_w(\hat{x},\hat w,y,u)
$$
with $\hat x$ the estimate of $x$ from the existing observer, and invoking the solution continuity of differential equations, and the state boundedness.

Regarding the second claim, we consider the autonomous case $u=0$. According to \cite[Proposition 4]{ANDetal}, for any $(x_e,y,\delta x_e,\delta y) \in \rea^{n_\xi+n_y} \times T \rea^{n_\xi+n_y} $, the norm of infinitesimal displacement $|\col(\delta x_e,\delta y)|$ is monotonically decaying along trajectories, tangentially to some output function level sets. That is, in our case if we fix the differential output $\delta y(t)\equiv 0$, then the obtained LTV system
\begin{equation}
\label{ltv:xw}
\left(\begin{aligned} \delta\dot x\\ \\ \delta \dot w \end{aligned}\right)
= 
\left(\begin{aligned}
& {\partial f_x \over \partial x}(x,y,0) & & 0 \\
& {\partial f_w \over \partial x }(x,w,y,0) & & {\partial f_w \over \partial w}(x,w,y,0)
\end{aligned}\right)
\left(\begin{aligned} \delta  x\\ \\ \delta   w \end{aligned}\right)
\end{equation}
satisfies
$$
\left|\begmat{\delta x(t) \\ \delta w(t)}\right|^2 \le a_0 \exp(-\lambda t)\left| {\partial \phi_e \over \partial x_e}(x_e(0),y(0)) \begmat{\delta x(0) \\ \delta w(0)}\right|^2,
$$
for some $a_0>0$. The inequality holds for any initial conditions, and thus we consider a particular one $\delta x(0)=0$, for which we have 
$
\delta x(t) =0, ~\forall t \ge 0
$
along \eqref{ltv:xw}. Then, it yields
$$
\delta \dot w = {\partial f_w\over \partial w}(x,w,y,0)\delta w
$$
admitting
$$
|\delta w(t)|^2 \le a_0\exp(-\lambda t)\left|{\partial \phi_e(x_e(0),y(0)) \over \partial w} \delta w(0)\right|^2
$$
uniformly along any trajectories. It implies that the augmentation dynamics \eqref{dotw} should be contracting.
\end{proof}

The second claim shows the necessity to impose contraction properties with respect to $w$ when introducing the augmentation \eqref{dotw}, a sufficient condition for which is to find a uniformly bounded metric $M_w: \rea^{n_w} \to \rea^{n_w\times n_w}$ satisfying
\begin{equation}
\label{contraction:w}
\partial_{f_w} M_w + M_w{\partial f_w \over\partial w} + {\partial f_w \over\partial w}^\top M_w \preceq - 2\lambda_w M_w, \; \lambda_w>0.
\end{equation}
Indeed, we can always find a vector field $f_w$ satisfying \eqref{contraction:w}, the simplest one among which may refer to
\begequ
\label{fwA}
f_w = Aw + f_a(x,y,u)
\endequ
with Hurwitz matrix $A$ and smooth function $f_a$. 

Now, it is straightforward to extend Theorems \ref{thm:reduced}-\ref{thm:convex-observer} to this case. We only give here the convex variant of the latter. Similarly, we may parameterize mappings $f_{ze}: \rea^{n_\xi +n_y + n_u} \to \rea^{n_\xi}$, $\phi : \rea^{n_\xi +n_y + n_u }\to \rea^{n_\xi}$ and $f_w :\rea^{n_\xi +n_y + n_u} \to \rea^{n_w}$, and matrices $P_e \in \rea^{n_\xi \times n_\xi}_{\succ 0}$ and $M_{w} \in \rea^{n_w \times n_w}_{\succ 0}$ as
\begin{equation}
\label{parameterization:argu}
\begin{aligned}
 f_{ze}& = \sum_{i=q_1+1}^{q_2}  \theta_i f_{ze,i} (x_e,y,u)
 ,\quad
  \phi  = \sum_{i=1}^{q_1} \theta_i \phi_i(x_e,y)
 \\
 f_w & = \sum_{i=q_2+1}^{q_3}  \theta_i f_{w,i} (x_e,y,u)
\end{aligned}
\end{equation}
for some $q_i\in \mathbb{N}_+$ and $q_i>q_j~(i>j)$. 
For convenience, we also define the gradients
$$
F_e(x_e,y,u) = {\partial f_{ze} \over \partial x_e}(x_e,y,u),
\quad
\Phi_{xe}(x_e,y) = {\partial \phi \over \partial x_{e}}(x_e,y).
$$

\begin{proposition}
\label{prop:convex-observer-immersion}\rm
Consider the system \eqref{eq:syst} with a fixed augmentation vector field $f_w$ satisfying \eqref{contraction:w}. For the case $n_\xi >n_x$ a convex set of contracting reduced-order observers is given by \eqref{parameterization:argu}, together with the observer dynamics 
\begequ
\label{convex-observer:argu}
\begin{aligned}
\dot \xi  = f_{ze}(\hat x, \hat w, y,u) ,\quad
\begmat{\hat x\\ \hat w}  = \phil(\xi,y),
\end{aligned}
\endequ
with $\phil(\cdot,y)$ the left inverse of $\phi(x_e,y)$ and $\theta \in \Theta$ defined by the convex constraints {\bf H1$'$}, {\bf H2$'$} and either {\bf H3$'$} or {\bf H4$'$}.
\begin{itemize}
    \item[\bf H1$'$] ({\em monotonicity}) $\forall y \in \rea^{n_y}$
$$
   {\partial \phi \over \partial x_e}(x,w,y) + \left[ {\partial \phi \over \partial x_e}(x,w,y) \right]^\top \succeq k I_{n_\xi} , \quad k>0.
$$
    \item[\bf H2$'$] ({\em correctness})
\begin{equation}
\label{correctness:aug}
\begin{aligned}
\Phi_x(x_e,y)f_x(\chi,u) 
&+ \Phi_y(x_e,y)f_y(\chi,u) \\
&+ \Phi_w(x_e,y) f_w(x_e,y,u)
=
f_{ze}(x_e,y,u)
\end{aligned}
\end{equation}
with
$
\Phi_w = \nabla_w \phi^\top(x,w,y,u).
$
\item[\bf H3$'$] By restricting 
$
\phi(x,w,y) = P_e\col(x,w) +  \varphi(y)
$
with $\varphi: \rea^{n_y} \to \rea^{n_\xi}$, and requiring
\begin{equation}
\label{ct1e}
F_e(x_e,y,u) + F_e^\top(x_e,y,u) + Q_e \preceq 0,
\end{equation}
with $P_e \succ 0$, and $Q_e \succ 0$ (or $Q_e=2\lambda P_e$ for the exponential case with $\lambda >0$).
\item[\bf H4$'$] The stronger contraction condition 
$$
\begin{aligned}
 2{\tt sym}(\Phi_{xe} - {r\over 2} F_e)   - \big|\Phi_{xe} + {r\over 2} F_e\big|^2_{P_e^{-1}} - P_e - rQ_e  \succeq 0,
\end{aligned}
$$
with $r>0$, and $Q_e \succ 0$ (or $Q_e=2\lambda \Phi_{xe}^\top P_e^{-1} \Phi_{xe}$ for contraction with rate $\lambda>0$). \qed
\end{itemize}
\end{proposition}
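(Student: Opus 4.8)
The plan is to reduce the immersion case $n_\xi > n_x$ to the matched-dimension setting already handled by Theorem~\ref{thm:convex-observer}, exploiting the augmentation \eqref{dotw}. The key observation is that once $x_e=\col(x,w)$ is regarded as the state to be estimated, its dimension equals $n_\xi=n_x+n_w$, so the map $\phi(\cdot,y):\rea^{n_\xi}\to\rea^{n_\xi}$ acquires a \emph{square} Jacobian $\Phi_{xe}$. This removes the two obstructions flagged just before the statement---the non-square $\nabla_x\phi$ and the image being only an open subset of $\rea^{n_\xi}$---and lets the global inverse function theorem be invoked exactly as in Theorem~\ref{thm:reduced}. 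First I would fix the augmentation vector field $f_w$ satisfying \eqref{contraction:w}, which renders the extended dynamics $\dot x_e=\col(f_x,f_w)$ forward complete (the $w$-subsystem is contracting, hence bounded whenever $x,y,u$ are), so that every hypothesis of Theorem~\ref{thm:reduced} has a well-defined meaning for the extended system.

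Next I would verify that each primed condition is precisely its unprimed counterpart with $x$ replaced by $x_e$, $\Phi_x$ by $\Phi_{xe}$, $F$ by $F_e$, and $P$ by $P_e$. Concretely, {\bf H1$'$} is the monotonicity hypothesis applied to $\Phi_{xe}$, so Lemma~\ref{lem:a1-cv} yields that $\phi(\cdot,y)$ is a global diffeomorphism onto $\rea^{n_\xi}$---that is, condition {\bf A1} for the extended system together with the global existence of $\phil(\cdot,y)$, with $|\Phi_{xe}^{-1}|\le 2/k$ ensuring $\phil$ is globally Lipschitz. The correctness {\bf H2$'$} is simply the chain-rule expansion $\frac{d}{dt}\phi(x_e,y)=\Phi_x f_x+\Phi_y f_y+\Phi_w f_w=f_{ze}$, i.e.\ condition {\bf C2} guaranteeing that $\xi_\star(t)=\phi(x_e(t),y(t))$ is a particular solution of \eqref{convex-observer:argu}; the new term $\Phi_w f_w$ is exactly the extra degree of freedom that the immersion buys when solving this PDE. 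Then, applying the argument of Lemma~\ref{lem:a2-cvx} verbatim in these extended variables, {\bf H2$'$} together with either {\bf H3$'$} or {\bf H4$'$} (under {\bf H1$'$}) implies the contraction inequality {\bf A2} for the extended system.

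With {\bf A1} and {\bf A2} established for $x_e$, Theorem~\ref{thm:reduced}---equivalently Theorem~\ref{thm:convex-observer} read in the extended coordinate---provides an $n_\xi$-dimensional contracting observer whose output satisfies $\hat x_e(t)\to x_e(t)$; projecting onto the first block gives $\hat x(t)\to x(t)$, which is \eqref{conv:general} and solves Problem~1 (consistently with Proposition~\ref{prop:translation}). Convexity is inherited directly: the primed inequalities are the same state-dependent LMIs as in Lemma~\ref{lem:a2-cvx}, and the parameterization \eqref{parameterization:argu} makes $\phi$ and $f_{ze}$ affine in the coefficients $\theta$, so the admissible set $\Theta$ is convex.

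The main obstacle I anticipate is not the algebra---which mirrors Theorem~\ref{thm:convex-observer}---but justifying that the augmentation is \emph{legitimate}: one must argue that estimating the enlarged state $x_e$ genuinely estimates $x$ without the artificially introduced coordinate $w$ destroying detectability or well-posedness. This is exactly the purpose of Lemma~\ref{lem:argument}: Claim~1 shows that a contracting $f_w$ cascaded onto an existing observer preserves the existence of an observer for $(x,w)$, while Claim~2 shows that contraction in $w$ is in fact \emph{necessary}. Together they certify that the requirement \eqref{contraction:w} on $f_w$ is the correct one, and that convergence of the $\xi$-dynamics legitimately descends to convergence of $\hat x$.
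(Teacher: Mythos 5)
Your proposal is correct and takes essentially the same route as the paper: the paper's proof is the one-line statement that it follows \emph{mutatis mutandis} from Theorem \ref{thm:convex-observer} and the preceding analysis, which is exactly the reduction you carry out---treating $x_e=\col(x,w)$ as the unknown state so the Jacobian becomes square, mapping {\bf H1$'$}--{\bf H4$'$} onto their unprimed counterparts via Lemmata \ref{lem:a1-cv}--\ref{lem:a2-cvx}, and invoking Lemma \ref{lem:argument} together with the fixed contracting $f_w$ to legitimize the augmentation. Your write-up merely makes explicit the details the paper leaves implicit.
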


\begin{proof}
The proof follows \emph{mutatis mutandis} the one of Theorem \ref{thm:convex-observer} and the analysis above.
\end{proof}

In the above proposition, we fix the vector field $f_w$ and then search for mappings $\phi$ and $f_{ze}$, since \emph{bi-linear} operations appear in both the correctness condition \eqref{correctness:aug} and the contraction condition \eqref{contraction:w} if we regard both $f_w$ and $M_w$ as decision variables, making the optimization problem non-convex. Then, we get a state-dependent bilinear matrix inequality (BMI) problem. An alternative to maintain convexity is to fix $\Phi_w$, then providing the degree of freedom to search for the vector field $f_w$. It is clear that \eqref{correctness:aug} relaxes the correctness condition \eqref{correctness} by involving additional terms. On the other hand, the excessive coordinates may provide more observer candidates, thus making it possible to achieve performance enhancement by carefully selecting among them.

\begin{remark}
If the augmented $w$-dynamics is in the form of \eqref{fwA} with $f_a$ only dependent on $(u,y)$, the proposed coordinate change is similar to the so-called \emph{filtered transformation} introduced in \cite{MARTOM}. Such a technique has been proved very useful in \emph{adaptive observer} design, {\em i.e.}, estimating unknown states under parameter uncertainty.
\end{remark}

\section{Further Results and Discussions}
\label{sec4}

In this section, we give further discussions about the main results in Section \ref{sec:3}, and show the connections with some existing results in the literature. Some remarks are in order.

\begin{remark}
For the case $n_\xi=n_x$, it is sometimes unnecessary to require the transformation $\phi(\cdot,y)$ a bijection from $\rea^{n_x} \to \rea^{n_x}$. A possible case is that the image space is an open subset in $\rea^{n_x}$, {\em i.e.}, $\phi(\rea^{n_x},y) \subset \rea^{n_x}$. For such a case, image extension or projection operator may be useful to continue design \cite{BERetalsiam,RAPMAL}. However, we adopt {\bf H1}, on one hand, to circumvent the possible difficulty from non-subjectivity, and one the other hand, to obtain a \emph{convex} condition with computational consideration.
\end{remark}

    %

\begin{remark}
\label{remark:doty}\rm
The obtained reduced-order observer is given in the $z$-coordinate, but the correctness can also be verified in the original coordinate. We may write the observer \eqref{convex_set_ob} in the $x$-coordinate, though \emph{unimplementable}, which is given by
\begin{equation}\label{dothatx}
\begin{aligned}
  \dot{\hat x}  &~ = {\partial \phil \over \partial z}(\xi,y) \blue{f_z} \big(\phil(\xi, y),y, u\big)
  + {\partial \phil \over \partial y}(\xi,y) \dot y
  \\
  &~ = ~ \left[ {\partial \phi \over \partial x}(\hat x,y) \right]^{-1} \blue{f_z} \big(\hat x, y, u\big) +
   {\partial \phil \over \partial y}(\xi,y) \dot y \\
  &~ =~ \left[ {\partial \phi \over \partial x}(\hat x,y) \right]^{-1}
  \Bigg(
  {\partial \phi \over \partial x }(\hat x,y) f_x(\hat x,y,u) \\
  & \hspace{3cm}+ {\partial \phi \over \partial y }(\hat x,y) f_y(\hat x,y,u)
  \Bigg) + {\partial \phil \over \partial y}(\xi,y) \dot y
  \\
  & ~ = ~f_x(\hat x,y,u) + [\Phi_x^{-1}\Phi_y ] \Big( f_y(\hat x,y,u) - f_y(x,y,u)
  \Big),
\end{aligned}
\end{equation}
where in the last equation we have used the fact that
\begequ
\label{eq:T}
\begmat{x\\y }
\mapsto
T(x,y):= \begmat{\phi(x,y) \\ y}
\endequ
is another diffeomorphism, and thus using the inverse function theorem again yields
$$
{\partial \phil \over \partial y}(z,y)\Big|_{z=\phi(x,y)} = - \left[{\partial \phi \over \partial x}(x,y)\right]^{-1} {\partial \phi \over \partial y}(x,y).
$$
The last line of \eqref{dothatx} verifies the invariance of the proposed observer, {\em i.e.},
$
\hat x(t) \equiv x(t),~\forall t\geq 0,~\mbox{if}~ x(0)=\hat x(0),
$
since the last term $(f_y(\hat x,y,u) - f_y(x,y,u))$ plays the role of ``updated (or innovation) term''. Note that, however, the observer dynamics \eqref{dothatx} cannot be realized due to the unavailability of $\dot y$, equivalently $f_y(x,y,u)$, and we should implement it in the $z$-coordinate instead.
\end{remark}

We next show the connections among the proposed designs and some existing results.

\subsection{Linear time-invariant systems}

It is easy to verify that the conditions either in Theorem \ref{thm:reduced} or Theorem \ref{thm:convex-observer} ({\bf H3}) are equivalent to the detectability of linear time-invariant (LTI) systems, summarized below.

\begin{proposition}\rm
Consider the LTI system
\begin{equation}\label{lti}
\begmat{\dot x \\ \dot  y} = \underbrace{\begmat{A_{11} & A_{12} \\ A_{21} & A_{22}}}_{:=A} \begmat{x \\ y} + \begmat{B_1 \\ B_2 }u
\end{equation}
with unknown state $x\in \rea^{n_x}$, input $u\in \rea^{n_u}$ and output $y \in \rea^{n_y}$. The conditions {\bf A1}-{\bf A2} are the necessary and sufficient condition to the detectability of the system \eqref{lti}.
\qed
\end{proposition}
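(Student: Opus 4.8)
The plan is to prove the equivalence in two directions, after first reducing detectability of the full system \eqref{lti} to detectability of the pair $(A_{11},A_{21})$. For the reduction I would apply the Popov--Belevitch--Hautus (PBH) test with output matrix $C=[0~I_{n_y}]$: a vector $\col(v_x,v_y)$ lies in the kernel of $\col(A-\lambda I,\,C)$ exactly when $v_y=0$ together with $(A_{11}-\lambda I)v_x=0$ and $A_{21}v_x=0$. Hence \eqref{lti} is detectable iff $\col(A_{11}-\lambda I,\,A_{21})$ has full column rank for every $\lambda$ with $\mathrm{Re}\,\lambda\ge 0$, i.e. iff $(A_{11},A_{21})$ is detectable, which in turn is equivalent to the existence of a gain $K$ making $A_{11}+KA_{21}$ Hurwitz.

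For sufficiency of detectability I would exhibit an admissible pair $(\phi,M)$ by restricting to the linear transformation $\phi(x,y)=Px+Ly$, so that $\Phi_x=P$ and $f_z=Pf_x+Lf_y$ gives $F:=\partial f_z/\partial x = PA_{11}+LA_{21}$. Since $M$ is constant the term $\partial_{f_z}M$ vanishes and, taking $M=P^{-1}$, a congruence by $P$ collapses condition {\bf A2} to $F+F^\top\prec 0$ (or $F+F^\top\preceq -2\lambda P$ in the exponential case). Writing $L=PK$ yields $F=P(A_{11}+KA_{21})$, so $\texttt{sym}\{F\}\prec0$ is precisely the Lyapunov inequality certifying that $A_{11}+KA_{21}$ is Hurwitz. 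Detectability of $(A_{11},A_{21})$ supplies such a $K$, Lyapunov's theorem supplies a matching $P\succ0$, and invertibility of $P\succ0$ gives {\bf A1}.

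For necessity I would argue by contraposition rather than algebra, which sidesteps the fact that {\bf A1}--{\bf A2} allow a possibly nonlinear $\phi$. If {\bf A1}--{\bf A2} hold, Theorem \ref{thm:reduced} furnishes a global convergent observer. Suppose the system were not detectable; then by the first step there is $\lambda$ with $\mathrm{Re}\,\lambda\ge0$ and $v\neq0$ satisfying $A_{11}v=\lambda v$ and $A_{21}v=0$. Taking $u=0$, the pair of trajectories differing by $\delta x(t)=e^{\lambda t}v$, $\delta y(t)\equiv0$ solves the system equations, so two distinct state trajectories generate identical $(y,u)$; an observer driven by $(y,u)$ then produces the same $\hat x(t)$ for both, contradicting $\hat x\to x$ because $\delta x(t)$ does not decay. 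Hence {\bf A1}--{\bf A2} imply detectability, completing the equivalence.

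The main obstacle I anticipate is the necessity direction: because {\bf A1}--{\bf A2} do not a priori constrain $\phi$ to be linear, one cannot directly read off a linear matrix inequality, so the indistinguishability argument (and, for complex $\lambda$, passing to the real $A_{11}$-invariant subspace spanned by $\mathrm{Re}\,v$ and $\mathrm{Im}\,v$, on which $A_{21}$ still vanishes) is the cleanest route. A secondary point needing care is checking that the Lyapunov solution can be taken symmetric positive definite, so that {\bf A1} and the identification $M=P^{-1}$ in {\bf A2} are simultaneously consistent.
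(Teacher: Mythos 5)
Your proof is correct, and it overlaps with the paper's argument in one direction while genuinely departing from it in the other. For detectability $\Rightarrow$ {\bf A1}--{\bf A2}, the two proofs are essentially the same: the paper reduces detectability of $(A,[0~~I_{n_y}])$ to the existence of $\tilde L$ making $A_{11}+\tilde L A_{21}$ Hurwitz (citing a lemma of Besan\c{c}on--Hammouri where you give a short self-contained PBH computation), and then takes $\phi(x,y)=x+\tilde L y$ with a constant Lyapunov metric; your $\phi(x,y)=Px+Ly$ with $L=PK$ and $M=P^{-1}$ is the same construction up to a congruence by $P$. (Incidentally, the paper's displayed condition reads $A_{11}+\tilde L A_{12}$, which is dimensionally inconsistent; your $A_{11}+KA_{21}$ is the correct form.) The genuine difference is in the converse direction {\bf A1}--{\bf A2} $\Rightarrow$ detectability. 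The paper treats this as ``trivial'' by restricting $\phi$ to the linear form $T_1x+T_2y$ at the outset, so that {\bf A1}--{\bf A2} immediately yield a linear reduced-order Luenberger observer, whose existence is classically equivalent to detectability. You instead argue by contraposition through Theorem \ref{thm:reduced} plus an indistinguishability argument: an undetectable mode $(\lambda,v)$ with $A_{11}v=\lambda v$, $A_{21}v=0$ produces two state trajectories with identical $(y,u)$ whose difference $e^{\lambda t}v$ does not decay, so no observer driven by $(y,u)$ can exist. This costs more work but is strictly stronger: it covers arbitrary, possibly nonlinear $\phi$ and state-dependent $M$ satisfying {\bf A1}--{\bf A2}, a case the paper's linear-$\phi$ restriction silently excludes, and your handling of complex eigenvalues (passing to the real invariant subspace) closes the one loose end such an argument would otherwise have.
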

\begin{proof}
For linear systems, we consider linear transformation in the form of
$$
\phi(x,y) = T_1 x + T_2y
$$
for some matrices $T_1$ and $T_2$, with $T_1$ full rank. The sufficient part is trivial, since {\bf A1}-{\bf A2} imply the existence of a linear Luenberger observer\footnote{Here, we refer to Luenberger's initial idea by involving a change of coordinate in \cite{LUE}. We adopt the qualifier ``initial'' since what is nowadays called ``Luenberger observer'' is different from \cite{LUE}, see \cite{BER} for more discussions.}
$$
\dot z = Fz + Dy + Eu, \quad \hat x = T_1^{-1}(z- T_2y)
$$
for some matrices $D,E$, and $F$ is Hurwitz. It is equivalent to detectability in the LTI context, see for example \cite[Section III]{LUEtac71}. Regarding the necessary part, we have
$$
\begin{aligned}
& \mbox{Detectability of } \eqref{lti}  \quad \\
& \quad  \Longleftrightarrow \quad  \exists ~L \in \rea^{(n_x +n_y) \times n_y} \quad \mbox {s.t.} ~  \lambda_i\{ A + L [0 ~~ I_{n_y}]\} \in \mathbb{C}_- \\
& \quad  \Longleftrightarrow \quad  \exists ~\tilde L \in \rea^{n_x \times n_y} \quad \mbox {s.t.} ~  \lambda_i\{A_{11} + \tilde L  A_{12}\} \in \mathbb{C}_-
\end{aligned}
$$
where in the second equivalence we have used \cite[Lemma 1]{BESHAM}, and $\lambda_i\{\cdot\}$s denote eigenvalues of square matrices. Now we select the mapping
$$
\phi(x,y) = x+ \tilde{L} y
$$
with $\tilde L$ guaranteeing $A_{11} + \tilde L  A_{12}$  a Hurwitz matrix. It is straightforward to verify that $\phi(\cdot)$ satisfies {\bf A1}-{\bf A2}.
\end{proof}

Indeed, the obtained reduced-order observer for LTI systems is precisely the observer proposed in the pioneering paper \cite[Theorem 4, pp. 77]{LUE}. 

\subsection{Strongly differentially observable systems}

In this subsection, we show the set of systems identified by Theorem \ref{thm:reduced} is larger than the set of strongly differentially observable systems. It is well known that high-gain observers are applicable to this class of nonlinear systems. For simplicity, we consider the single-output ($n_y=1$) autonomous system
\begin{equation}
\label{single-output}
 \dot x = f_x (x,y), \quad \dot y = f_y(x,y).
\end{equation}

\begin{definition}
\label{def:sdo}\rm
({\em Strong differential observability} \cite{GAUKUPbook}) The single-output system \eqref{single-output} is strongly differentially observable, if there exists $n_\ell \in \mathbb{N}$ such that 
$
{\calh}^e_{\ell}(\chi) = \begpma{y, \calh_{\ell}^\top(\chi) }^\top
$
is an injective immersion to $\chi= \col(x,y)$, where
$
\calh_\ell(\chi) = [f_y(\chi), L_f f_y(\chi), \ldots, L_f^{n_\ell -2}f_y(\chi) ]^\top.
$
\end{definition}

The above definition can be equivalently expressed as $\calh_l(\chi)$ is an injective immersion to $x$ uniformly in $y$. We have the following.

\begin{proposition}
\label{prop:sdo}\rm
If the system \eqref{single-output} is strongly differentially observable with $n_\ell = n_x+n_y$, then the function
$$
\phi(x,y) = \calh_\ell (x,y) - \ell\Lambda y
$$
satisfies {\bf A2} semi-globally, where $\ell>0$, $\Lambda = [\lambda_1,\ldots, \lambda_{n_x}]^\top $ with all the roots of $s^{n_x} + \lambda_1 s^{n_x-1} + \ldots \lambda_{n_x}$ in $\mathbb{C}_-$.
\end{proposition}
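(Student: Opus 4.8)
The plan is to read this claim as the coordinate-free form of the classical high-gain observer and to verify {\bf A2} after passing to the observability canonical form. The hypothesis $n_\ell=n_\chi$ makes $\calh^e_\ell(\chi)=(y,\calh_\ell^\top(\chi))^\top$ an injective immersion between spaces of equal dimension, hence a local diffeomorphism; equivalently $\nabla_x\calh_\ell(x,y)$ is invertible at every $(x,y)$, which is exactly what {\bf A2} needs in order to form $[\partial\phi/\partial x]^{-1}$. Writing $\eta_0:=y$ and $\eta:=\calh_\ell(\chi)=(\eta_1,\dots,\eta_{n_x})^\top$, the autonomous dynamics \eqref{single-output} becomes the integrator chain $\dot\eta_0=\eta_1$, $\dot\eta_i=\eta_{i+1}$ for $1\le i<n_x$, and $\dot\eta_{n_x}=\psi$, where $\psi:=L_f^{n_x}f_y$ is a genuine smooth function of $(\eta_0,\eta)$ precisely because $\calh^e_\ell$ is injective.

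I would then compute the variational dynamics of $z=\phi(x,y)=\calh_\ell-\ell\Lambda y$. By the proof of Theorem \ref{thm:reduced} this is driven by $A_z:=\frac{\partial f_z}{\partial x}[\frac{\partial\phi}{\partial x}]^{-1}$, which here equals $\frac{\partial f_z}{\partial z}$ at frozen $y$. Substituting $\eta_i=z_i+\ell\lambda_i y$ and $\dot\eta_0=\eta_1$ gives, after a short calculation,
\[
A_z=A_0-\ell\Lambda e_1^\top+e_{n_x}\nabla_\eta\psi^\top,
\]
where $A_0$ is the shift matrix, $e_1,e_{n_x}$ are standard basis vectors, and $\nabla_\eta\psi$ is smooth hence bounded on compacts. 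The term $A_0-\ell\Lambda e_1^\top$ is a companion matrix with characteristic polynomial $s^{n_x}+\ell\lambda_1 s^{n_x-1}+\dots+\ell\lambda_{n_x}$, which the Hurwitz choice of $\Lambda$ renders Hurwitz once $\ell$ is large enough; the remaining $e_{n_x}\nabla_\eta\psi^\top$ is a state-dependent perturbation confined to the last row.

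For the contraction inequality itself I would take a constant weight $M$ (so $\partial_{f_z}M=0$ and $M$ is trivially uniformly bounded for fixed $\ell$), built from a high-gain weighting $M=D_\ell^{-1}PD_\ell^{-1}$ with $P\succ0$ solving the Lyapunov equation of the scaled nominal matrix for a suitable diagonal $D_\ell$, and then verify $MA_z+A_z^\top M\preceq-2\lambda M$ on any prescribed compact set by taking $\ell$ large. The main obstacle---and the reason the statement is only semi-global---is exactly this last domination: the perturbation $e_{n_x}\nabla_\eta\psi^\top$ is state dependent and can be bounded only after restricting to a compact region, so the gain $\ell$ needed to absorb it into the nominal decay must be chosen as a function of that region, and no single $\ell$ works globally. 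The remaining items (uniform boundedness of $M$, and the algebra converting \eqref{cond:a2} into the Lyapunov inequality for $A_z$ through $[\partial\phi/\partial x]^{-1}$) are routine once $\ell$ and $M$ are fixed.
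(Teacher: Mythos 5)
Your skeleton is the same as the paper's (pass to the phase-variable coordinates $\eta_0=y$, $\eta=\mathcal{H}_\ell(\chi)$; split the variational matrix into a companion nominal part plus the rank-one, last-row perturbation $e_{n_x}\nabla_\eta\psi^\top$, bounded on compacts; use a quadratic metric and high gain $\ell$ to dominate on a prescribed compact set), and your computation $A_z=A_0-\ell\Lambda e_1^\top+e_{n_x}\nabla_\eta\psi^\top$ is correct for the transformation as literally written. The genuine gap is the sentence claiming that $A_0-\ell\Lambda e_1^\top$, with characteristic polynomial $s^{n_x}+\ell\lambda_1 s^{n_x-1}+\cdots+\ell\lambda_{n_x}$, is ``rendered Hurwitz once $\ell$ is large enough'' and can then absorb the perturbation. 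Multiplying the coefficients by $\ell$ does \emph{not} scale the spectrum: as $\ell\to\infty$, one root behaves like $-\ell\lambda_1$ while the other $n_x-1$ roots converge to the roots of the reduced polynomial $\lambda_1 s^{n_x-1}+\cdots+\lambda_{n_x}$, which need not lie in $\mathbb{C}_-$ even when $s^{n_x}+\lambda_1 s^{n_x-1}+\cdots+\lambda_{n_x}$ does. Concretely, $s^5+20s^4+19.8s^3+59.8s^2+19.6s+40$ is Hurwitz (check the Routh array), yet the reduced quartic equals $20(s^2-0.01s+1)(s^2+s+2)$, which has roots with real part $+0.005$; for this admissible $\Lambda$ your nominal matrix is unstable for every sufficiently large $\ell$. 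Worse, even when the reduced polynomial happens to be Hurwitz, the spectral margin of $A_0-\ell\Lambda e_1^\top$ \emph{saturates} as $\ell$ grows, and no choice of weight $M$ (congruence by $D_\ell$ cannot move eigenvalues) yields a decay rate beyond that margin; equivalently, the $H_\infty$ gain from the last-row input to the state, whose $n_x$-th entry is $\big(s^{n_x-1}+\ell(\lambda_1 s^{n_x-2}+\cdots+\lambda_{n_x-1})\big)/\big(s^{n_x}+\ell(\lambda_1 s^{n_x-1}+\cdots+\lambda_{n_x})\big)$, stays bounded away from zero uniformly in $\ell$. Hence ``take $\ell$ large'' cannot dominate a perturbation bound $\rho=\max|\nabla_\eta\psi|$ that grows with the compact set, and your final step---which you call routine---is exactly where the proof breaks.

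What makes the paper's argument work is that its nominal part is $\ell Q$ with $Q=A_0-\Lambda e_1^\top$: \emph{all} eigenvalues are proportional to $\ell$, so the constant $M$ solving $MQ+Q^\top M=-I$ gives $M(\ell Q)+(\ell Q)^\top M=-\ell I$, which dominates the compact-set-bounded term ${\partial b\over\partial x}\big({\partial \mathcal{H}_\ell\over\partial x}\big)^{-1}$ once $\ell$ exceeds a bound depending on the region; that is the entire content of ``semi-global.'' To obtain the $\ell Q$ structure the output injection must scale as \emph{powers} of $\ell$, $L(\ell)=[\ell\lambda_1,\ell^2\lambda_2,\ldots,\ell^{n_x}\lambda_{n_x}]^\top$ (the classical high-gain scaling), possibly combined with the diagonal scaling $D_\ell=\mathrm{diag}(1,\ell,\ldots,\ell^{n_x-1})$ you allude to, which simultaneously shrinks the last-row perturbation by $\ell^{-(n_x-1)}$. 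Your reading of $\phi$ with gain linear in $\ell$ is faithful to the statement as printed, and the mismatch traces back to the paper's own display of $f_z$; but a correct proof of the proposition must produce the $\ell Q$ (power-scaled) structure, and with the linear-in-$\ell$ gain you analyzed, the semi-global domination cannot be carried out.
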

\begin{proof}
We may verify that the above construction yields $\dot z = f_z(x,y)$ with
$$
f_z = \ell Q\big(\calh_\ell(x,y) - \Lambda y \big) + \ell Q\Lambda y + b(x,y)
$$
and
$$
Q = \begmat{-\lambda_1 & 1 & & \\ \vdots & & \ddots & \\ \vdots & & & 1 \\ -\lambda_{n_x} & 0 &\ldots & 0}
, \quad
b(x,y) = \begmat{0 \\ \vdots \\ 0 \\ L_f^{n_x-1}f_y(x,y)}.
$$
We note that
$$
\begin{aligned}
{\partial f_z \over \partial x}  \left[ {\partial \phi \over \partial x} \right]^{-1}
& =
{\partial \beta \over \partial z} \bigg|_{z =\phi(x,y)}\\
& = \ell Q + {\partial \phil \over \partial z}\bigg|_{z =\phi(x,y)} \\
& = \ell Q + {\partial b \over \partial x} \left({\partial \calh_\ell \over \partial x}\right)^{-1}.
\end{aligned}
$$
Since $Q$ is a Hurwitz matrix, there exists a matrix $M \succ 0$ such that $MQ+Q^\top M = - I$. By selecting a sufficiently large $\ell>0$, and noting $\calh_\ell$ independent of the parameter $\ell$, we guarantee {\bf A2} semi-globally.
\end{proof}

\begin{remark}\label{rem:sdo}\rm
The above result can be extended to more general cases---multi-input and nonautonomous. Furthermore, $n_\ell = n_\chi$ is the most widely studied case in high-gain observers. If a given system is strongly differentially observable with $n_\ell > n_\chi$, we need to add an augmentation dynamics as done in Section \ref{sec:3d}.
\end{remark}

\subsection{Connections with the full-order observer in \cite{MAN}}


Our previous paper \cite{MAN} uses a similar underlying mechanism to design full-order contracting observer for the system
\begequ
\label{NLS2}
 \dot \bfx = f(\bfx,u),\;  y = h(\bfx) \quad \bfx\in \rea^n, \; y\in \rea^{n_y}.
\endequ
To this end, we aim to find a coordinate change $z=\phi(\bfx)$ with $\dot z= f_z(\bfx,y,u)$ with some function $f_z$, which may be non-unique, but satisfying 
\begin{itemize}
    \item[\bf C1$'$] ({\em correctness}) $f_z(\bfx,h(\bfx),u)= \Phi(\bfx) f(\bfx,u)$, with $\Phi:= {\partial \phi \over \partial \bfx}$ full rank;
    \item[\bf C2$'$] ({\em contraction}) $\Phi^\top P^{-1} F + F^\top P^{-1} \Phi\prec 0$, with constant $P\succ 0$ and $F:= {\partial \over \partial \bfx} f(\bfx, y,u)$ and viewing $(u,y)$ as exogenous signals. 
\end{itemize}

Then, a full-order observer in the \emph{original coordinate} is given by
\begequ
\label{FOO}
\dot{\hat \bfx} = \Phi(\hat \bfx)^{-1} f(\hat\bfx,y,u).
\endequ
Its extension to discrete time and sampled-data versions were also considered in \cite{MAN}.

In Theorem \ref{thm:reduced} and Proposition \ref{prop:convex-observer-immersion} we extend the results in \cite{MAN} to the reduced-order observer design in transformed coordinates with arbitrary dimension $n_\xi \ge n_x$, thus enlarging the domain of applicability to more nonlinear systems. Here are some key differences between them.

\begin{itemize}
    \item[-] For the full-order case in \cite{MAN}, the system dynamics in the transformed $z$-coordinate is partially or virtually contracting \cite{WANSLO}, since the output $y$ is explicitly a function of the state $z$. But it is not the case for the reduced-order observer design in this paper.
    
    \item[-] The convex condition {\bf H3}, unlike the case in \cite{MAN}, introduces a nonlinear output injection term $\varphi(y)$, making it quite general. Indeed, coordinate changes in the form of $\phi(x) = Px + \varphi(y)$ are widely adopted in the field of nonlinear observers \cite{ASTbook}. 
    
    \item[-] An advantage of the full-order design \cite{MAN} is its absence of computing the left inverse mapping, and there is no need to guarantee the ``integrability'' of $\Phi(\bfx)$.
\end{itemize}

\subsection{Connections with I\&I observers}

To the best of our knowledge, the most comprehensive design approach of nonlinear reduced-order observers until now may refer to the I\&I observers, proposed in \cite{KARetal} and later elaborated in \cite[Chapter 5]{ASTbook}, by means of rendering attractive an appropriately selected invariant manifold in extended state space. The key extra degree of freedom given in I\&I observers relies on a coordinate transformation in order to be able to get a (generally nonlinear) asymptotically stable error dynamics. The procedure is similar to the one done in Section \ref{sec:3a}. In this subsection, we will clarify the connections and differences between the proposed designs and I\&I observers. Let us recall the main result in I\&I observers.\footnote{In \cite{KARetal} all the transformations, {\em i.e.} $\rho$ and $\varphi$, are allowed to be time-varying functions. We here adopt the time-invariant case to simplify the presentation.}

\begin{proposition}
\label{prop:i&i}\rm \cite{KARetal}
Consider the system \eqref{eq:syst} with forward complete solutions. Suppose that there exist mappings 
$
\rho: \rea^{n_\xi}\times \rea^{n_y} \to \rea^{n_\xi}, \quad
\varphi: \rea^{n_x} \times \rea^{n_y} \to \rea^{n_\xi}
$
with a left inverse $\varphi^{\tt L}(\cdot,y)$ such that the following holds.
\begin{itemize}
\item[\bf I1] For all $(y,\xi)\in \rea^{n_y} \times \rea^{n_\xi}$, 
$
\det(\nabla_\xi \rho) \neq 0.
$
\item[\bf I2] The system
\begin{equation}
\label{error-dyn}
\begin{aligned}
\dot e
=&
\left.\left[{\partial\varphi \over \partial y}(x,y)f_y(x,y,u)
\blue{+}
{\partial \varphi \over \partial x}(x,y)f_x(x,y,u)\right]\right|_{x=x}^{x=\hat x}
\\
& +
{\partial\rho \over \partial y}[f_y(x,y,u) - f_y(\hat x,y,u)]
\end{aligned}
\end{equation}
is asymptotically stable at the origin uniformly in $x,y$ and $u$ with $\hat x := \varphi^{\tt L}(\varphi(x,y)+e,y)$. Then, selecting
$$
\begin{aligned}
N(\cdot) & =  \Big({\partial \rho \over \partial \xi}\Big)^{-1}
\left[
{\partial \varphi \over \partial x}(\hat x,y) f_x(\hat x,y,u) \right.
\\
& \qquad +
{\partial \varphi \over \partial y}(\hat x,y) f_y(\hat x,y,u)
\left.
-{ \partial \rho \over \partial y}(\xi,y) f_y(\hat x,y,u)
\right]
\\
H(\cdot) & = \varphi^{\tt L}(\rho(\xi,y),y).
\end{aligned}
$$
makes the system \eqref{observer:general} a convergent observer.
\hfill$\triangleleft$
\end{itemize}
\end{proposition}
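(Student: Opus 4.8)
The plan is to run the standard immersion-and-invariance argument: introduce an off-the-manifold error coordinate, show the target manifold is invariant, render it attractive through \textbf{I2}, and finally transfer convergence back to the state estimate via the left inverse $\varphi^{\tt L}$.

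First I would define the error
$$
e := \rho(\xi,y) - \varphi(x,y),
$$
which measures the deviation of $(\xi,x,y)$ from the manifold $\{\rho(\xi,y)=\varphi(x,y)\}$. Condition \textbf{I1} ensures $\partial\rho/\partial\xi$ is invertible everywhere, so that the chosen $N(\cdot)$ is well-defined. Differentiating $e$ along the closed loop and substituting $\dot\xi=N(\xi,y,u)$, $\dot x=f_x(x,y,u)$, $\dot y=f_y(x,y,u)$, the factor $\partial\rho/\partial\xi$ multiplying $N$ cancels by construction, while the $\partial\rho/\partial y\,\dot y$ contribution combines with the $-(\partial\rho/\partial y)f_y(\hat x,y,u)$ term sitting inside $N$ to produce the innovation term; collecting the remainder reproduces exactly the error dynamics \eqref{error-dyn}, with $\hat x = \varphi^{\tt L}(\rho(\xi,y),y) = \varphi^{\tt L}(\varphi(x,y)+e,y)$ as in the statement.

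Next I would verify invariance of $\{e=0\}$. When $e=0$ the left-inverse property gives $\hat x = \varphi^{\tt L}(\varphi(x,y),y)=x$, so each bracket in \eqref{error-dyn} vanishes and $e=0$ is an equilibrium of the error system; hence $e(0)=0$ forces $e(t)\equiv 0$, and on this manifold $\hat x\equiv x$. Attractivity is then immediate from \textbf{I2}: since \eqref{error-dyn} is asymptotically stable at the origin uniformly in $(x,y,u)$, we obtain $e(t)\to 0$ along every trajectory.

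Finally I would pass from $e\to 0$ to the estimation error. Because $\hat x - x = \varphi^{\tt L}(\varphi(x,y)+e,y) - \varphi^{\tt L}(\varphi(x,y),y)$ and $e(t)\to 0$, smoothness of $\varphi^{\tt L}$ yields $|\hat x(t)-x(t)|\to 0$, which is \eqref{conv:general}. I expect this last step to be the main obstacle: converting $e\to 0$ into $\hat x - x\to 0$ requires uniform continuity of $\varphi^{\tt L}$ along trajectories, so a boundedness/regularity argument in the spirit of the proof of Proposition \ref{prop:translation} is needed rather than mere pointwise continuity. A secondary subtlety is that \textbf{I2} is a uniform stability hypothesis for a genuinely time-varying error system, with $x,y,u$ entering as exogenous signals, so care must be taken that $e(t)\to 0$ holds along the actual closed-loop trajectories and not merely for frozen parameter values.
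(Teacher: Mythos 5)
Your proof cannot be compared against one in the paper, because the paper offers none: Proposition \ref{prop:i&i} is stated as a recalled result from \cite{KARetal}, with the citation standing in for the proof. That said, your reconstruction is correct and is the standard I\&I argument that the cited work uses. The error coordinate $e=\rho(\xi,y)-\varphi(x,y)$ is the right one---the paper itself identifies $e$ exactly this way inside the proof of Proposition \ref{prop:connection-i&i}---and your key computation checks out: differentiating $e$ along the closed loop gives $\dot e = {\partial \rho \over \partial \xi}N + {\partial \rho \over \partial y}f_y(x,y,u) - {\partial \varphi \over \partial x}(x,y)f_x(x,y,u) - {\partial \varphi \over \partial y}(x,y)f_y(x,y,u)$, and substituting the stated $N(\cdot)$ cancels the $({\partial \rho / \partial \xi})^{-1}$ factor and reproduces \eqref{error-dyn} with $\hat x=\varphi^{\tt L}(\rho(\xi,y),y)=\varphi^{\tt L}(\varphi(x,y)+e,y)$; the invariance of $\{e=0\}$ and attractivity via {\bf I2} then follow as you say. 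The two subtleties you flag are indeed the genuine ones, and your proposed resolutions are the right ones: {\bf I2} must be read as uniform asymptotic stability of a time-varying error system driven by the actual closed-loop signals $(x(t),y(t),u(t))$ (this is precisely how the paper re-uses {\bf I2} in Proposition \ref{prop:connection-i&i}), and the final transfer from $e(t)\to 0$ to $|\hat x(t)-x(t)|\to 0$ requires uniform continuity of $\varphi^{\tt L}$ along trajectories rather than pointwise continuity---an argument via state boundedness exactly parallel to the proof of Proposition \ref{prop:translation}, or alternatively a Lipschitz-type assumption on the left inverse as imposed in the I\&I literature. With that regularity step made explicit, your argument is complete.
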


 We are now ready to show the connections between I\&I reduced-order observers and the proposed contracting observer design in Section \ref{sec:3}. In order to simplify the analysis, instead of {\bf I1} we additionally require a slightly stronger condition 
\begequ
\label{rho0}
 \big|\det(\nabla_\xi \rho) \big| > \rho_0, \quad \rho_0 >0.
\endequ
Then we have a global inverse $\rho^{\tt L}: \rea^{n_\xi} \times  \rea^{n_y} \to \rea^{n_\xi}$ such that $\rhol(\rho(\eta,y),y)=\eta$ for any $\eta \in \rea^{n_\xi}$.

We have the following.

\begin{proposition}
\label{prop:connection-i&i}\rm
Consider the system \eqref{eq:syst} admitting an I\&I observer identified in Proposition \ref{prop:i&i}, assuming that the mappings satisfy \eqref{rho0} and {\bf I2} with $n_\xi = n_x$. Then, a variant of {\bf A2}, namely
\begin{itemize}
  \item[\bf A2$'$] The LTV system
$
  {d\over dt} \delta \xi = {\partial f_z \over \partial x}(x,y,u) \left[ {\partial \phi \over \partial x}(x,y) \right]^{-1} \delta \xi
$
is asymptotically stable uniformly along any possible trajectory of $(x,y,u)$.
\end{itemize}
holds, and the given system has a contracting reduced-order observer in the sense of Theorem \ref{thm:reduced}. Furthermore, the obtained observer \emph{exactly} coincides with the I\&I observer by selecting
$\phi(x,y) = \rhol(\varphi(x,y),y).$
\end{proposition}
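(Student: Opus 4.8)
The plan is to take the transformation $\phi(x,y):=\rhol(\varphi(x,y),y)$ proposed in the statement and show, by chain-rule bookkeeping, that the contracting observer of Theorem~\ref{thm:reduced} built from this $\phi$ reproduces the I\&I observer of Proposition~\ref{prop:i\&i} term by term, and then to transfer the stability certificate \textbf{I2} to the differential condition \textbf{A2$'$}.

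First I would settle the algebraic identities. Since \eqref{rho0} makes $\rho(\cdot,y)$ a global diffeomorphism with inverse $\rhol(\cdot,y)$, the identity $\rho(\phi(x,y),y)=\varphi(x,y)$ holds, whence $\phil(\cdot,y)=\varphi^{\tt L}(\rho(\cdot,y),y)=H(\cdot,y)$: the two observer outputs coincide. Moreover $\varphi(\cdot,y)$ is injective (it has a left inverse) and, with $n_\xi=n_x$, composing with the diffeomorphism $\rhol(\cdot,y)$ shows $\phi(\cdot,y)$ inherits injectivity and, under the standing I\&I regularity, the diffeomorphism property \textbf{A1}. Differentiating $\rho(\phi(x,y),y)=\varphi(x,y)$ in $x$ and in $y$ yields
\begin{equation*}
\frac{\partial\rho}{\partial\xi}\Phi_x=\frac{\partial\varphi}{\partial x},\qquad \frac{\partial\rho}{\partial\xi}\Phi_y+\frac{\partial\rho}{\partial y}=\frac{\partial\varphi}{\partial y},
\end{equation*}
so that $f_z=\Phi_xf_x+\Phi_yf_y=(\partial_\xi\rho)^{-1}[\varphi_xf_x+(\varphi_y-\partial_y\rho)f_y]$. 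Evaluating at $\hat x=\phil(\xi,y)$, where $\phi(\hat x,y)=\xi$ makes $\partial_\xi\rho$ and $\partial_y\rho$ agree with their values at $(\xi,y)$, this is exactly the drift $N(\xi,y,u)$ of Proposition~\ref{prop:i\&i}. Hence $\dot\xi=f_z(\hat x,y,u)=N(\xi,y,u)$ and the observers coincide, which is the ``furthermore'' claim.

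Second, I would obtain \textbf{A2$'$}. Introduce the off-manifold coordinate $e:=\rho(\xi,y)-\varphi(x,y)=\rho(\xi,y)-\rho(z,y)$ with $z:=\phi(x,y)$; then $\hat x=\varphi^{\tt L}(\varphi(x,y)+e,y)$ reproduces \textbf{I2}, and $e=0\iff\xi=z$. Because \eqref{rho0} bounds $\partial_\xi\rho$ away from singularity (and boundedness of trajectories bounds it above), the map $\tilde\xi:=\xi-z\mapsto e$ is a uniform change of variables with $\delta e=\partial_\xi\rho\,\delta\xi$ on the manifold. The variational equation of the coincident observer about the particular solution $z(t)$ is, by the computation in the proof of Theorem~\ref{thm:reduced} specialised to $\hat x=x$, precisely the LTV system of \textbf{A2$'$}; it is related to the first variation of the \textbf{I2} error dynamics about $e=0$ by the Lyapunov transformation $\partial_\xi\rho(z(t),y(t))$. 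Since such a bounded, boundedly-invertible transformation preserves uniform asymptotic stability, \textbf{A2$'$} follows, and Theorem~\ref{thm:reduced} (with \textbf{A2} weakened to \textbf{A2$'$}) delivers the contracting reduced-order observer.

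The main obstacle is this last passage: \textbf{I2} certifies uniform asymptotic stability of the \emph{nonlinear} error dynamics, whereas \textbf{A2$'$} concerns the \emph{linear} variational system, and nonlinear asymptotic stability does not imply stability of the linearisation in general. I would resolve this by reading the I\&I certificate in the form natural to the contraction setting of this paper --- a quadratic/Riemannian differential Lyapunov function certifying that the first variation along every admissible trajectory is uniformly asymptotically stable --- so that the transformation $\partial_\xi\rho$ carries it over verbatim; the remaining bookkeeping (boundedness of $\partial_\xi\rho$ and its inverse, and the coincidence of the variational matrices) is routine.
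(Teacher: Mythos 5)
Your algebraic half is correct and is essentially the paper's own computation. The paper establishes the coincidence of the two observers inside its derivation of the error dynamics \eqref{ez}: substituting the I\&I drift $N(\cdot)$ from {\bf I2} and using exactly the chain-rule identities you obtain from $\rho(\phi(x,y),y)=\varphi(x,y)$, it reduces $\dot e_z$ (with $e_z:=\xi-\phi(x,y)$) to $f_z(\hat x,y,u)-f_z(x,y,u)$, i.e., it exhibits the I\&I observer as the contracting observer $\dot\xi=f_z(\phil(\xi,y),y,u)$, $\hat x=\phil(\xi,y)$; your evaluation $f_z(\hat x,y,u)=N(\xi,y,u)$ is the same calculation read in the opposite direction, and your identification $\phil(\cdot,y)=\varphi^{\tt L}(\rho(\cdot,y),y)=H(\cdot,y)$ matches the paper's.

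The divergence is in the transfer from {\bf I2} to {\bf A2$'$}, and here your proposal has a genuine gap that you yourself flag: you linearize the error dynamics \eqref{error-dyn} at $e=0$ and transport it by the transformation $\partial_\xi\rho$, but, as you note, uniform asymptotic stability of a nonlinear system (e.g.\ $\dot e=-e^3$) does not imply asymptotic stability of its linearization, and your repair---re-reading {\bf I2} as a quadratic differential Lyapunov certificate---strengthens the hypothesis, so it proves a different proposition than the one stated. The paper's route avoids introducing a linearization remainder at all: having obtained $\dot e_z=f_z(\hat x,y,u)-f_z(x,y,u)$, it rewrites this difference \emph{exactly}, via the fundamental theorem of calculus, as $\big[\int_0^1 {\partial f_z \over \partial x}\big[{\partial \phi \over \partial x}\big]^{-1}\!\big(\phi(x,y)+\mu e_z,y,u\big)\,d\mu\big]\,e_z$ (last line of \eqref{ez}), then connects $\xi(0)$ and $z(0)=\phi(x(0),y(0))$ by a line segment $\gamma(s)$, defines $\delta\xi=\partial Z(t;\gamma(s),y,u)/\partial s$, observes that $\delta\xi$ obeys precisely the LTV system of {\bf A2$'$}, and deduces $|\delta\xi(t)|\le\kappa(|\delta\xi(0)|,t)$ from the uniformity of {\bf I2} over all initial conditions and trajectories. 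This exact-integral representation plus initial-condition homotopy is the ingredient missing from your write-up; it is what lets the paper pass from the nonlinear error system to the differential system without ever invoking a quadratic certificate. To be fair, your objection does touch the delicate point of the paper's own argument---the step from decay of the finite difference $e_z$ to decay of the differential $\delta\xi$ (the paper's identification $\xi(t)\equiv z(t)+\delta\xi(t)$) is exactly the difference-versus-differential passage you worry about, handled there by uniformity rather than by a certificate---but if the goal is the proposition as stated, you should reproduce that homotopy argument rather than strengthen {\bf I2}.
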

\begin{proof}
For convenience, we define a new intermediate variable as $\eta := \varphi (x,y)$. Invoking \eqref{rho0} and the left invertibility of $\varphi(x,y)$, we define a mapping $\phil: \rea^{n_x}\times \rea^{n_y} \to \rea^{n_x}$ as
$$
\phil(z, y) := \varphi^{\tt L}\left( \rho( z,y  ), y \right),
$$
and with some substitutions we have
$$
\begin{aligned}
\phil(z,y)\big|_{z = \phi(x,y)} 
=  \varphi^{\tt L}(\rho(\phi(x,y),y),y)
= x,
\end{aligned}
$$
thus $\phil$ being the left inverse of the composite function $\phi(\cdot)$.

In {\bf I2}, the variable $e$ is, indeed, the estimation error in the $\eta$-coordinate, {\em i.e.},
$
e = \rho(\xi,y) - \varphi(x,y).
$
Since the mapping $\rho(\cdot)$ identifies a diffeomorphism, one may equivalently define the error in the $\xi$-coordinate as
$$
e_z := \xi - \rhol(\varphi(x,y),y),
$$
implying that {\bf I2} is equivalent to the uniform asymptotic stability of the dynamics of $e_z$ with respect to the origin.

The dynamics of $e_z$ is given by
\begequ
\label{ez}\small
\begin{aligned}
\dot{e}_z & = N(\xi,y,u)  - \left[{\partial \rhol \over \partial \eta } \right] \left[ {\partial \varphi \over \partial x} f_x(x,y,u) + {\partial \varphi \over \partial y} f_y(x,y,u) \right] \\
& \quad - {\partial \rhol \over \partial y}f_y(x,y,u)
\\
& =  N(\xi,y,u)  - \left[{\partial \rho \over \partial z } \right]^{-1} \left[ {\partial \varphi \over \partial x} f_x + {\partial \varphi \over \partial y} f_y\right] + \left[ {\partial \rho \over \partial z } \right]^{-1} {\partial \rho \over \partial y} f_y\\
& =\left(  \left({\partial \rho \over \partial z } (\phi(x,y),y)\right)^{-1} \times \right. \\
&   \quad  \left( {\partial \varphi \over \partial x} f_x(x,y,u) + {\partial \varphi \over \partial y} f_y(x,y,u) - {\partial \rho \over \partial y} f_y(x,y,u)\right)  \Bigg)
\Bigg|^{\hat x}_{x}
\\
& = \left({\partial \phi \over \partial x}(x,y)f_x(x,y,u) + {\partial \phi \over \partial y}(x,y) f_y(x,y,u) \right)\bigg|^{\hat x}_{x}
\\
& = f_z(\hat x, y,u) -f _z(x,y,u)
\\
& =  f_z( \phil(\phi(x,y)+e_z, y), y,u) -f _z(x,y,u)
\\
& = \int_0^1 {\partial f_z \over \partial x}\left[ {\partial \phi \over \partial x} \right]^{-1} \big( \phi(x,y) + \mu e_z ,y,u\big) d\mu \cdot e_z
\end{aligned}
\endequ
where in the third equation we substitute the mapping $N(\cdot)$ in {\bf I2}, with $\hat x = \phil(\phi(x,y)+e_z, y)$. 

Now we consider a line segment $\gamma(s)$ between two initial conditions $\xi(0)$ and $z(0)= \phi(x(0),y(0))$ defined by the parameterization
$$
\gamma(s) = s \xi(0) + (1-s) z(0), \quad s\in [0,1].
$$
We denote $Z(t;\gamma(s),y,u)$ as the solution of the dynamical system
$$
\dot z = f_z(\phil(z,y),y,u)
$$
under the exogenous signals $(u,y)$ from the initial condition $\gamma(s)$ with $s\in [0,1]$. We define the infinitesimal displacement $\delta \xi$ as
$
\delta\xi = {{\partial Z(t;\gamma(s),y,u)} \over \partial s},
$
the dynamics of which is governed by the system in {\bf A2$'$}, with $x(t)=\phil(Z(t;\gamma(s),y,u),y)$. From the definition of $\delta \xi$, we have
$$
\xi(0) = z(0) + \delta \xi(0).
$$
Invoking \eqref{ez}, it then yields $\xi(t) \equiv z(t) + \delta\xi(t)$, and noting the uniform asymptotic stability of the $e_z$-system, thus
$$
|\delta \xi(t)| \le \kappa(|\delta\xi(0)|,t), \quad  t\ge 0
$$
for some function $\kappa$ of class $\mathcal{KL}$.
\end{proof}

Some further discussions are in order.

\begin{itemize}
  %
  \item[1)] The proposed designs---including Theorems \ref{thm:reduced}-\ref{thm:convex-observer}---provide ``more constructive'' solutions to the problem of reduced-order observer design, compared with the I\&I methodology. This is because the conditions in the proposed methods only constitute of some differential inequalities, but enjoy sufficient degrees of freedom. In particular, the results in Theorem \ref{thm:convex-observer} and Proposition \ref{prop:convex-observer-immersion} involve some computationally efficient convex conditions.
  \item[2)] In Theorem \ref{thm:reduced}, we need to search for two mappings---the coordinate transformation $\phi(x,y)$ and the metric $M$, and in Theorem \ref{thm:convex-observer} we need one more mapping $f_z$ to achieve convex relaxation. In contrast, when designing I\&I observers we need to search simultaneously for four mappings---the coordinate change $\varphi(x,y)$, its left inverse $\varphi^{\tt L}(z,y)$, another mapping $\rho$ and a Lyapunov function (to show convergence).
  \item[3)] If the mapping $\rho(\cdot)$ satisfies \eqref{rho0} instead of {\bf I1}, as clearly shown in the proof of Proposition \ref{prop:connection-i&i}, it is unnecessary to introduce the mapping $\rho$ in reduced-order observer design. We are able to ``gather'' all the degrees of freedom from $\rho$ and $\varphi$ in the transformation $\phi$.
\end{itemize}

\subsection{Connections with transverse contraction}

In this subsection, we study the link between the proposed designs and transverse contraction, a notion introduced in \cite{MANSLOscl} to analyze attractive limit cycles. It will be shown that designing a reduced-order observer is equivalent to find a coordinate transformation in which the system is transversely contracting.

We slightly extend \cite[Definition 1]{MANSLOscl} as follows.

\begin{definition}
\label{def:trans}\rm ({\em transverse contraction})
The forward complete system \eqref{chi} is said to be transversely contracting with rate $\lambda>0$ (or transversely asymptotically contracting) w.r.t. the function $\psi$, where $\psi: \rea^{n_\chi} \to \rea^{r}$ ($1\le r< n_\chi$), if for any initial condition pair $(\chi_a,\chi_b) \in \rea^{2n_\chi}$ and input $u$ we have
\begin{equation}
\label{trans-contraction}
 |\psi(\chi(t;\chi_a,u)) - \psi(\chi(t;\chi_b,u))| \le e^{-\lambda t} b(\chi_a,\chi_b)
\end{equation}
for some function $b(\cdot) \ge 0$ with $b(\chi,\chi)=0$ or
\begin{equation}
\label{trans-contraction2}
|\psi(\chi(t;\chi_a,u)) - \psi(\chi(t;\chi_b,u))| \le \kappa(|\chi_a - \chi_b|,t )
\end{equation}
for transverse asymptotic contraction, with $\kappa$ of class $\cal KL$.
\hfill$\triangleleft$
\end{definition}

It is clear that the definition in \cite{MANSLOscl}, tailored for orbital stability, is a particular case of Definition \ref{def:trans}. Considering the simple case that a given autonomous system has a unique attractive limit cycle represented by the function $q(\chi)=0$ with $q: \rea^{n_\chi} \to \rea^r$, we recover the above definition by choosing $\psi$ as $q(\chi)$. We provide a sufficient condition to achieve transverse asymptotic contraction below. The condition is motivated by the concept of \emph{asymptotic phase} in dynamical systems and also used in \cite[Corollary 2]{MANSLOtac} to verify the existence of asymptotic invariant manifold, also similar to what we obtain as follows.

\begin{proposition}
\label{prop:ccm}\rm
Consider the system \eqref{chi} forward invariant in a compact set $\cale:=\cale_x\times\cale_y$. Suppose there exists a mapping $\psi: \rea^{n_\chi} \to \rea^r$ ($1\le  r < n_\chi$) with $\nabla \psi$ full rank, and a uniformly bounded metric $P(\chi) \in \rea_{\succ0}^{n_\chi \times n_\chi}$ such that
\begin{equation}
\label{ccm}
{\partial \psi \over \partial \chi}
\left(
 \partial_f P + P(\chi) {\partial f\over \partial \chi} + {\partial f\over\partial \chi}^\top P(\chi)
\right)
{\partial \psi \over \partial \chi}^\top \prec 0
\end{equation}
holds. Then,
\begin{itemize}
\item[1)] the system \eqref{chi} is transversely asymptotically contracting with respect to $\psi$.
\item[2)] it admits a trivial functional observer
$
\dot{\hat \chi} = f(\hat\chi,u)
$
with respect to the function $\psi$.
\item[3)] if there exists a class $\mathcal{K}_\infty$ function $\rho_y$ (parameterised by $y$) satisfying the injectivity
$
|x_a-x_b| \le \rho_y\big( |\psi(x_a,y) - \psi(x_b, y)| \big), ~
\forall(x_a,x_b)\in B^2_\epsilon(\cale_x)
$
uniformly in $y$ for some $\epsilon>0$, the estimate
\begin{equation}\label{psil}
\hat x = \psi^{\tt L}(\psi(\hat \chi),y):= \underset{\eta \in B_\epsilon(\cale)}{\mbox{argmin}}\big| \psi(\eta,y) - \psi(\hat \chi) \big|
\end{equation}
guarantees \eqref{conv:general}.
\hfill$\triangleleft$
\end{itemize}
\end{proposition}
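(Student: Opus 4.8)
The plan is to establish the three claims sequentially, building each on the previous one. For claim 1), I would work with the differential dynamics. Given two nearby initial conditions connected by a smooth curve $\gamma:[0,1]\to\rea^{n_\chi}$, consider the infinitesimal displacement $\delta\chi$ propagating along the flow, governed by $\frac{d}{dt}\delta\chi = \frac{\partial f}{\partial\chi}\delta\chi$. The key object is the projected displacement $\delta\psi := \frac{\partial\psi}{\partial\chi}\delta\chi$, which measures how the output of $\psi$ spreads between trajectories. I would introduce the differential Lyapunov candidate $V(\chi,\delta\chi) = \delta\chi^\top P(\chi)\delta\chi$ and differentiate it along the flow, obtaining $\dot V = \delta\chi^\top\big(\partial_f P + P\frac{\partial f}{\partial\chi} + \frac{\partial f}{\partial\chi}^\top P\big)\delta\chi$. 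The condition \eqref{ccm} only gives negativity when contracted by $\frac{\partial\psi}{\partial\chi}$ on both sides, so the subtlety is that $\dot V$ is negative only in the directions that project nontrivially through $\nabla\psi$. The argument must show that the \emph{transverse} component---the part of $\delta\chi$ relevant to $\delta\psi$---decays, while components in $\ker(\nabla\psi)$ are irrelevant to the quantity $|\psi(\chi_a)-\psi(\chi_b)|$ we wish to bound. Integrating the decay of the appropriate transverse metric along $\gamma$ and using $|\psi(\chi_a)-\psi(\chi_b)| \le \int_0^1 |\delta\psi(t;\gamma(s))|\,ds$ yields a $\mathcal{KL}$ bound, establishing \eqref{trans-contraction2}.

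For claim 2), the functional observer $\dot{\hat\chi} = f(\hat\chi,u)$ is simply a copy of the plant dynamics driven by the same input. Since $\chi(t)$ and $\hat\chi(t)$ are two solutions of the \emph{same} system \eqref{chi} under the common input $u$, the transverse contraction just established in claim 1) applies directly with $(\chi_a,\chi_b)=(\chi(0),\hat\chi(0))$, giving $|\psi(\hat\chi(t))-\psi(\chi(t))|\to 0$. Thus $\psi(\hat\chi)$ reconstructs $\psi(\chi)$ asymptotically, which is exactly what it means to be a functional observer with respect to $\psi$.

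For claim 3), I would close the loop from $\psi$-convergence back to $x$-convergence. The definition \eqref{psil} sets $\hat x$ as the minimizer of $|\psi(\eta,y)-\psi(\hat\chi)|$ over $\eta\in B_\epsilon(\cale)$; when the argument is $\psi(\chi)$ itself, consistency forces $\psi^{\tt L}(\psi(\chi),y)=x$. The injectivity hypothesis, $|x_a-x_b|\le\rho_y(|\psi(x_a,y)-\psi(x_b,y)|)$ with $\rho_y$ of class $\mathcal{K}_\infty$, is precisely what converts smallness of the $\psi$-discrepancy into smallness of the $x$-discrepancy. Applying it to the pair $(\hat x, x)$ and using $\psi(\hat\chi)\to\psi(\chi)$ from claim 2) gives $|\hat x - x|\le\rho_y(|\psi(\hat x,y)-\psi(x,y)|)\to 0$, establishing \eqref{conv:general}. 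A minor technical point is ensuring $\hat x(t)$ eventually enters $B_\epsilon(\cale_x)$ so the injectivity bound applies, which follows from the forward invariance in $\cale$ and the asymptotic phase convergence.

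I expect the main obstacle to be claim 1): condition \eqref{ccm} controls the contraction metric only after left- and right-multiplication by $\nabla\psi$, so the decay of $V=\delta\chi^\top P\delta\chi$ is not guaranteed on all of $\rea^{n_\chi}$. The careful part is isolating the transverse dynamics---formally the dynamics of $\delta\psi$, or equivalently the projection of $\delta\chi$ onto the orthogonal complement of $\ker(\nabla\psi)$ in the $P$-metric---and showing that \eqref{ccm} yields a genuine contraction rate for that reduced quantity, uniformly over the compact forward-invariant set $\cale$. This is where the asymptotic-phase viewpoint of \cite{MANSLOtac} enters, and where uniform boundedness of $P$ and compactness of $\cale$ are used to extract uniform constants.
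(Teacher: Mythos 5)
Your plans for claims 2) and 3) match the paper's proof: the functional observer is just a copy of the plant, so transverse contraction applies to the two solutions, and the injectivity bound together with forward invariance in $\cale$ converts $\psi$-convergence into \eqref{conv:general}. The genuine gap is in claim 1), exactly at the step you yourself flag as ``the careful part.'' You propose to isolate ``the dynamics of $\delta\psi$, or equivalently the projection of $\delta\chi$ onto the orthogonal complement of $\ker(\nabla\psi)$ in the $P$-metric'' and show that \eqref{ccm} gives a contraction rate for that reduced quantity. No such closed reduced dynamics exists: writing $A:={\partial f\over\partial\chi}$ and $S:=\partial_f P+PA+A^\top P$, one has ${d\over dt}\delta\psi=\big(\partial_f{\partial\psi\over\partial\chi}\big)\delta\chi+{\partial\psi\over\partial\chi}A\,\delta\chi$, which depends on all of $\delta\chi$, not on $\delta\psi$ alone; and if you split $\delta\chi=\delta_h+\delta_v$ with $\delta_v\in\ker\big({\partial\psi\over\partial\chi}\big)$, then $\dot V=\delta_h^\top S\delta_h+2\delta_h^\top S\delta_v+\delta_v^\top S\delta_v$, where \eqref{ccm} constrains only the first term (and only when $\delta_h$ lies in the row space of the Jacobian). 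The cross term and the kernel term are completely unconstrained by the hypotheses: the kernel component is \emph{not} irrelevant to the transverse dynamics, since it feeds back into the transverse directions through $A$, and nothing bounds that coupling. A passive projection/Lyapunov argument therefore cannot be closed, uniformly on $\cale$ or otherwise.

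The paper's proof uses a mechanism your plan does not contain: the kernel directions are treated as \emph{virtual control inputs} that actively cancel the coupling, rather than as directions to be projected away. Concretely, $\nabla\psi$ is completed to a frame via Wazewski's theorem and Gram--Schmidt, yielding $g(\chi)$ whose columns span $\ker\big({\partial\psi\over\partial\chi}\big)$; an auxiliary \emph{controlled} system ${d\over dt}\Gamma(s,t)=f(\Gamma,u)+g(\Gamma)v(s,t)$ is set up over a homotopy $\Gamma(s,0)=\gamma(s)$ joining $\chi(0)$ to $\hat\chi(0)$, and \eqref{ccm} is read as a control contraction metric condition with $g$ as actuation, so that \cite[Theorem 1]{MANSLOtac} provides a differential feedback $\delta v$ rendering the closed-loop differential dynamics asymptotically stable; hence $|\Gamma(1,t)-\Gamma(0,t)|\to 0$. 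Since $v(1,\cdot)=0$, the endpoint $\Gamma(1,t)$ is exactly the uncontrolled trajectory $\hat\chi(t)$, while ${\partial\psi\over\partial\chi}\,g=0$ ensures the control never moves the $\psi$-output along the $s=0$ trajectory, giving $\psi(\Gamma(0,t))=\psi(\chi(t))$; combining these facts yields \eqref{trans-contraction2}. Your citation of the ``asymptotic-phase viewpoint of \cite{MANSLOtac}'' gestures toward this, but the construction itself---frame completion, the auxiliary controlled homotopy, and the CCM stabilization theorem---is the missing idea, and without it the argument you outline fails.
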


Its proof is given in Appendix A. We here make some comparisons between the results in Section \ref{sec:3} and transverse contraction.

\begin{itemize}
  \item[1)] The mapping \eqref{eq:T}
with $\phi$ defined in  Theorem \ref{thm:reduced}, is a diffeomorphism, and thus we may rewrite the full system dynamics in the $(z,y)$-coordinate as
$$
\Sigma_z: 
    \dot z  = f_z (z,y,u) , \quad \dot y  = f_y(z,y,u).
$$
If the system \eqref{chi} has a convergent reduced-order observer in the sense of Theorem \ref{thm:reduced}, the condition {\bf A2} makes the system $\Sigma_z$ transversely asymptotically contracting w.r.t. the linear mapping $\psi$ and the metric defined as
$$
\psi= \begmat{~I_{n_x} ~\Big| ~0_{n_y \times n_y}~} \begmat{z \\ y},
~
P = \begmat{P_z(z) & P_{yz}(z,y) \\ P_{yz}^\top (z,y) & P_{y}(z,y)}
$$
where $P_z(z)$ defined in  Theorem \ref{thm:reduced} (with a slight abuse of notations), and $P_{yz}$ and $P_{yy}$ are any mappings which guarantee $P$ a uniformly bounded metric.
%
%
\item[2)] The system \eqref{chi} in its original coordinate does not verify transverse contraction. The main difference between Theorem \ref{thm:reduced} and Proposition \ref{prop:ccm} is the way how we use $y$ and its estimate $\hat y$. In the latter, we \emph{do not} try to make full use of the information contained in the measured output when designing the functional observer $\dot{\hat \chi} = f(\hat\chi,u)$, making the sufficient condition relatively conservative. Meanwhile, transverse contraction implies the existence of a functional observer.
\item[3)] Some definitions similar to transverse contraction have been studied in the literature. In \cite{FORSEP}, the authors introduce horizontal contraction, a notion defined on \emph{tangent space}, to study the incremental stability along specific distributions. Here, the transverse contraction is defined on state space. In \cite{JAFetal} the concept of semi-contraction is recently proposed, which can be regarded as a particular case. In Appendix B, we give an alternative sufficient condition to transverse contraction.
\end{itemize}

\section{Examples}
\label{sec5}

In this section, we use the proposed methods to design reduced-order observers for four examples, including a numerical one and three physical benchmarks. 

\subsection{A polynomial example}

The first example is a polynomial system studied in \cite{FANARC,MAN}, with full-order observers presented therein. We use this example to show how to apply Theorem \ref{thm:convex-observer} to design an observer via convex optimization. The dynamics is given by
\begin{equation}\label{example:1}
\begin{aligned}
\begmat{\dot x_1 \\ \dot x_2} & = \begmat{x_1 - {1\over3}x_1^3 - x_1x_2^2 \\ x_1 - x_2 -{1\over3}x_2^3 - x_2x_1^2}, \quad
\dot y  = x_1.
\end{aligned}
\end{equation}
We aim to design a reduced-order observer using {\bf H3} in Theorem \ref{thm:convex-observer}, with mappings $P \in \rea^{2\times 2}_{\succ 0}$, $f_z$ and $\varphi: \rea \to \rea^2$ to be determined. We search polynomials using a positivstellensatz \cite{PAR} as 
$$
\begin{aligned}
f_{z1}(x,y), f_{z2}(x,y) \in \rea_{3}[x,y],
~
\varphi_1(y),\varphi_2(y) \in \rea_{2}[y]
\end{aligned}
$$
and select $\lambda =1$. A reduced-order observer is found, via Yalmip \cite{LOF} and Mosek, in 0.61 seconds on a desktop with 3.00 GHz Intel Core i7-9700 CPU and 3 GB RAM. The obtained mappings are $P=\diag(0.6370,0.6369)$ and $\varphi(y) = \col(-2.1872y, -0.6368y)$, with the observer given by
$$
\begin{aligned}
  \dot \xi  =  \begmat{0.6370(\hat x_1 - {1\over3}\hat x_1^3 - \hat x_1\hat x_2^2) \\0.6369(\hat x_1 - \hat x_2 -{1\over3}\hat x_2^3 - \hat x_2 \hat x_1^2)} ,~
  \hat x  = P^{-1}(\xi - \varphi(y)).
\end{aligned}
$$
We show in Fig. \ref{fig:1} the simulations with initial conditions $x(0) = [3\quad 5]^\top$, $y(0)=-4$ and $\xi(0) =  [0 \quad 0]^\top$, where high-frequency ``measurement noise'' has been added in the output channel. It is generated by
the block ``Uniform Random Number'' in Matlab/Simulink, with sample time 0.001 and the interval $[-0.02,0.02]$. As we can see, the estimates converge to their true values after a short transient stage, and the observer is not sensitive to measurement noise.
\begin{figure}
    \centering
    \includegraphics[width=\linewidth]{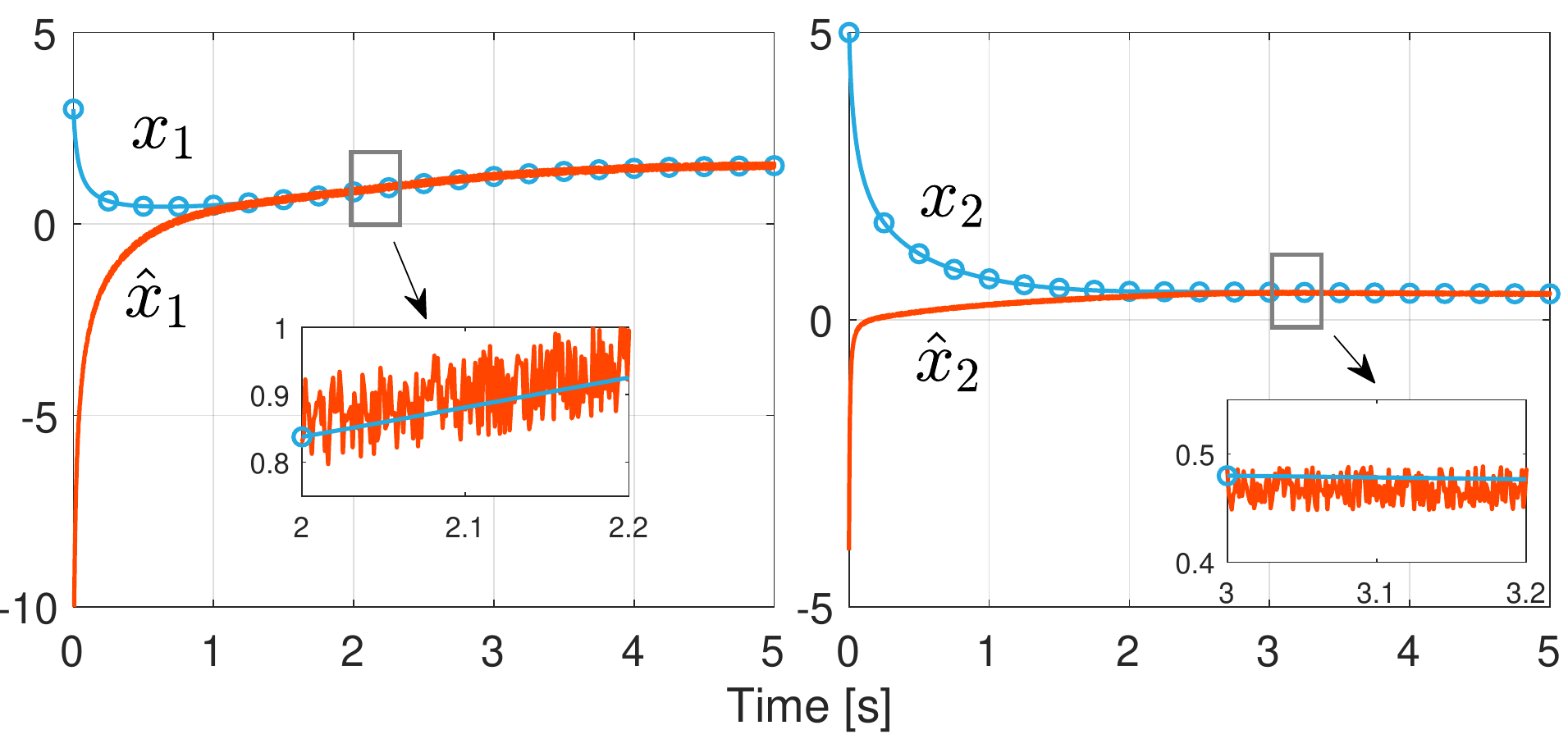}
    \caption{Simulation results of the polynomial system \eqref{example:1}}
    \label{fig:1}
\end{figure}

\subsection{Magnetic levitation system}

In the second example, we study an electromechanical system---magnetic levitation (MagLev) model, which is given by
\begin{equation}
\label{maglev}
\begin{aligned}
\begmat{\dot\lambda \\ \dot q \\ \dot p}
= 
\begmat{ {R \over k} (q-c) \lambda + u \\ {1\over m} p \\{1\over 2k} \lambda^2 - mg }
\end{aligned}
\end{equation}
with $\lambda $ the flux linkage, $q \in (-\infty,c)$ the position, considering physical constraints, $p$ the momenta and $u$ the input voltage, where $R,m,c,k>0$ are some physical parameters \cite{YIetalejc}. Here, we assume only the position $y=q$ measurable, with unknown states $x= \col(\lambda, p)$.\footnote{See \cite{YIetalejc} for a more challenging task of sensorless estimation, assuming only current $i:= {1\over k}(c-q)\lambda$ measured.}

The MagLev model is indeed a polynomial system, and thus the convex formulations in Theorem \ref{thm:convex-observer} are applicable. However, we provide an alternative of \emph{recursive} searching design. In the first step, we consider a two-dimensional system with $\lambda$ as the unknown state, and the measured state $y$. Then using {\bf H3} to search for parameter $P_1>0$ and mapping $\varphi_1 : \rea \to \rea$ under the constraint $q<c$. It is trivial to get an admissible solution $P_1=1$ and $\varphi_1=0$, and we are able to design an observer to get exponentially convergent estimate of $\lambda$. In the second step, we regard $p$ as the unmeasured state, $y$ the output and $\lambda$ a ``known'' input, and then solve the convex optimization in Theorem \ref{thm:convex-observer} again. We summarize the result below.

\begin{proposition}
\label{prop:maglev}\rm
Consider the MagLev model \eqref{maglev} with $q<c$. The observer
\begin{equation}
\label{observer:maglev}
  \dot\xi = \begmat{{R\over k}(y-c) \xi_1 + u\\ -{\ell \over m}\xi_2 -{\ell^2\over m}y - mg +{1\over 2k}\xi_1^2},
  \quad
  \hat x = \xi + \begmat{0 \\ \ell y}
\end{equation}
with any gain $\ell>0$, guarantees \eqref{conv:general} globally. \qed
\end{proposition}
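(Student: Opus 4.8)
The plan is to recognise observer \eqref{observer:maglev} as the instance of the convex construction \eqref{convex_set_ob} produced by the two-step recursive design sketched above, and then to close the argument with a cascade estimate. Concretely, I would take the partial coordinate change
$$
z = \phi(x,y) = \begmat{\lambda \\ p - \ell y},
$$
i.e. $\phi(x,y) = Px + \varphi(y)$ with $P = I_2 \succ 0$ and $\varphi(y) = \col(0,-\ell y)$. Assumption {\bf H1} then holds with $k=2$, since $\Phi_x = P = I_2$ gives $\Phi_x + \Phi_x^\top = 2I_2$, so the left inverse $\phil(\xi,y) = \xi + \col(0,\ell y)$ is globally defined and reproduces exactly the output map $\hat x = \xi + \col(0,\ell y)$ of \eqref{observer:maglev}; the correctness {\bf H2} holds by construction of $f_z = \Phi_x f_x + \Phi_y f_y$, which one checks coincides with the right-hand side of the $\xi$-dynamics in \eqref{observer:maglev}.

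First I would compute the error dynamics. Because $\varphi$ depends only on $y$ and $P = I_2$, the estimation errors in the $x$- and $z$-coordinates coincide: writing $e_\lambda = \hat\lambda - \lambda$ and $e_p = \hat p - p$, a direct substitution gives the \emph{cascade}
$$
\begin{aligned}
\dot e_\lambda &= \tfrac{R}{k}(q-c)\,e_\lambda, \\
\dot e_p &= -\tfrac{\ell}{m}\,e_p + \tfrac{1}{2k}(\hat\lambda+\lambda)\,e_\lambda.
\end{aligned}
$$
This reflects that $\partial f_z/\partial z$ is lower triangular with diagonal entries $\tfrac{R}{k}(q-c)$ and $-\tfrac{\ell}{m}$; it is exactly the structure the recursive design exploits, and it explains why a single static metric fails globally—the off-diagonal entry $\lambda/k$ grows unboundedly—whereas the two separate {\bf H3} steps each succeed with $P=1$.

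Next I would establish convergence of the leading subsystem. Since $q < c$ and $R,k>0$, the scalar gain $\tfrac{R}{k}(q-c)$ is strictly negative, so $|e_\lambda(t)|$ is nonincreasing; invoking the state-boundedness hypothesis underlying Proposition \ref{prop:translation}, $q$ stays in a compact subset of $(-\infty,c)$, hence $c-q \ge \delta > 0$ and $e_\lambda(t) \to 0$ exponentially. For the driven subsystem, the pole $-\ell/m<0$ (here $\ell>0$ is essential) makes the $e_p$-dynamics a globally exponentially stable linear filter. Boundedness of $\lambda$ together with boundedness of $e_\lambda$ yields boundedness of $\hat\lambda = \lambda + e_\lambda$, so the forcing term $\tfrac{1}{2k}(\hat\lambda+\lambda)e_\lambda$ is bounded and tends to zero; variation of constants (or an ISS/converging-input--converging-state argument) then gives $e_p(t)\to 0$, which is precisely \eqref{conv:general}.

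The main obstacle is the cascade interconnection rather than either subsystem in isolation: one must verify that the product $(\hat\lambda+\lambda)\,e_\lambda$ genuinely vanishes, which hinges on propagating boundedness of the plant state to the observer state and on the rate of $e_\lambda\to 0$ dominating the bounded coupling coefficient. I expect this step to require the boundedness assumption; the remaining items—verifying {\bf H1}, {\bf H2}, and the triangular form of $f_z$—are routine computations.
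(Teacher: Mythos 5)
Your proposal is correct and takes essentially the same route as the paper: the paper's recursive two-step design (first an observer for $\lambda$ via {\bf H3} with $P_1=1$, $\varphi_1=0$ under the constraint $q<c$, then an observer for $p$ with $\varphi_2(y)=-\ell y$ treating $\lambda$ as a ``known'' input replaced by $\hat\lambda$) is exactly your cascade decomposition. Your explicit triangular error dynamics and converging-input argument merely spell out the cascade convergence step that the paper leaves implicit, and your appeal to state boundedness to keep $q$ away from $c$ matches the level of rigor of the paper's own argument.
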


In \cite{YAGYAZ} the authors present an observer, via contraction analysis, with only $y=q$ measured, as done here. However, that design only guarantees local convergence. We give the simulation results of both the proposed observer \eqref{observer:maglev} and the full-order observer in \cite{YAGYAZ} with all physical parameters borrowed from \cite{YIetalejc}, and $\ell =0.5$ for \eqref{observer:maglev} and $r_0=5,~k_0=600$ for the design in \cite{YAGYAZ}. We consider the system initial condition $x(0)=[0.003,0,0]^\top$, and two sets of observer initial conditions. In the first, we assume having a good initial guess, {\em i.e.}, $\hat \xi(0)=[0, 0]^\top$ for the proposed observer and $\hat x(0)=[0.0035, 0, 0]^\top$ for the other. As we can see, both of them get satisfactory performance. In the second, we consider large initial condition differences, {\em i.e.}, $\xi(0)=[0.010, 0.010]^\top$ and $\hat x(0)=[0.006,0.002, 0.0078]^\top$. For this case, our proposed observer states still converge to the true ones, but the observer in \cite{YAGYAZ} diverges, showing its locality.

\begin{figure*}
    \centering
    \includegraphics[width=0.8\linewidth]{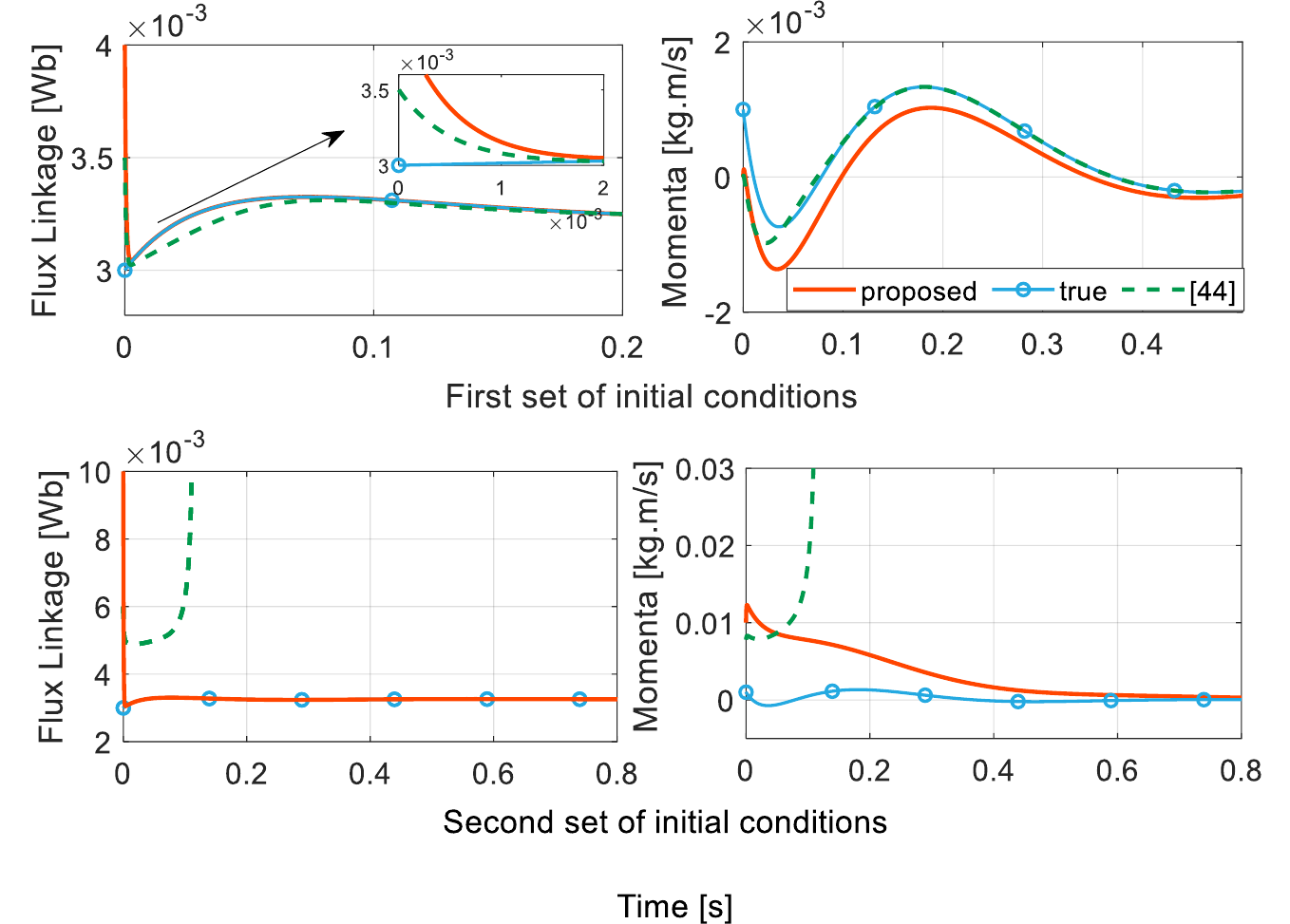}
    \caption{The unmeasured states and their estimates from the proposed observer \eqref{observer:maglev} and the structure preserving observer in \cite{YAGYAZ}, which is a local design, for the MagLev model \eqref{maglev}}
    \label{fig:2}
\end{figure*}

\subsection{Inverted pendulum on cart system}

In this subsection, we consider a mechanical model of cart-pendulum system, the model of which is given by
%
%
\begin{equation}
\label{pendulum}
\begin{aligned}
 \dot q  = \Psi (q)p ,\quad
 \dot p  = \Psi^\top(q) [ \nabla V(q) - G(q)u ]
\end{aligned}
\end{equation}
with
$$
\Psi(q) =
\begmat{
{\sqrt{m} \over \sqrt{m -b^2 \cos^2(q_1)} } & 0\\
{-b \cos(q_1) \over \sqrt{m} \sqrt{m-b^2 \cos^2(q_1)} } &  {1\over \sqrt m}
}
, \;
G =\begmat{0 \\ 1}
$$
and $V(q) = a\cos(q_1)$, where $q \in \mathbb{S}\times \rea$ is the generalized position---the pendulum angle $q_1$ and the horizontal position of the car, $p\in \rea^2$ is a momenta-like variable with $\Psi(q)p$ the generalized velocities, with parameters $a,b,m>0$. We assume that only $y=q$ is measured, and the task is to estimate the internal state $x = p$, equivalently the speed. It is well known that high-gain observer is applicable to this task with a \emph{semi-global} domain of attraction by increasing the adaptation gain. We here use {\bf H3} to obtain a globally convergent observer \emph{without} high-gain injection.

In the design, we fix $P=I_2$ and get the transformation 
$
\phi(x,y) = x + \varphi(y),
$
with $\varphi(y)$ to be found. Note that $\Psi(q)$ is full rank, and the dynamics of $y$ is affine in $x$. The condition {\bf H3} motivates us to select $\varphi(y)$ satisfying the PDE
\begequ
\label{pde}
{\partial \varphi \over \partial y} (y)\Psi(y) = -\lambda I_2
\endequ
with $\lambda >0$. Using the symbolic solver, we get a feasible solution
\begin{equation}
\label{varphi:pendulum}
\varphi(y) =  -\lambda
\begmat{
   \int_{0}^{y_1} \sqrt{1- {b^2\over m}\cos^2(s) }ds
  \\
  {b\over \sqrt m} \sin(y_1)  + \sqrt m y_2
}.
\end{equation}
We are ready to present the observer design as follows.

\begin{proposition}
\label{prop:cart}\rm
Consider the system \eqref{pendulum} with $q$ measured. The reduced-order observer
\begin{equation}
\label{obs:pendulum}
\begin{aligned}
   \dot \xi & = - \lambda \xi + \lambda \varphi(y) + \Psi^\top(y)[\nabla V(y) - Gu]
   \\
   \hat x & = \xi - \varphi(y).
\end{aligned}
\end{equation}
guarantees 
   $
   \lim_{t\to\infty} |  \hat x(t) - p (t)| =0 
   $
  exponentially.
\end{proposition}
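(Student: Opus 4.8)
The plan is to instantiate the convex condition {\bf H3} of Theorem \ref{thm:convex-observer} with the affine transformation $\phi(x,y)=Px+\varphi(y)$, $P=I_2$, so that the claimed observer \eqref{obs:pendulum} becomes \emph{exactly} the realization \eqref{convex_set_ob} once a $\varphi$ solving the PDE \eqref{pde} is in hand. Here $x=p$ and $y=q$, and the two structural facts I would exploit are that $f_x=\Psi^\top(q)[\nabla V(q)-Gu]$ is \emph{independent} of $x=p$, while $f_y=\Psi(q)p=\Psi(y)x$ is \emph{linear} in $x$. Condition {\bf H1} is immediate, since $\Phi_x=\partial_x\phi=I_2$ gives $\Phi_x+\Phi_x^\top=2I_2\succeq 2I_2$; and the correctness {\bf H2} holds by construction because I define $f_z:=\Phi_x f_x+\Phi_y f_y$ directly through \eqref{correctness}, with $\Phi_y=\tfrac{\partial\varphi}{\partial y}$.

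The central step is to verify that the explicit $\varphi$ in \eqref{varphi:pendulum} solves the PDE \eqref{pde}, i.e. $\tfrac{\partial\varphi}{\partial y}\Psi(y)=-\lambda I_2$. Differentiating \eqref{varphi:pendulum} componentwise yields the lower-triangular Jacobian
$$
{\partial\varphi\over\partial y}=-\lambda\begmat{\sqrt{1-{b^2\over m}\cos^2 y_1} & 0\\[2pt] {b\over\sqrt m}\cos y_1 & \sqrt m},
$$
and I would then multiply by $\Psi(y)$ entry by entry. The key simplification is $\sqrt{1-{b^2\over m}\cos^2 y_1}=\sqrt{m-b^2\cos^2 y_1}/\sqrt m$, which makes the $(1,1)$ and $(2,2)$ products collapse to $1$ and the two off-diagonal contributions cancel, giving $\tfrac{\partial\varphi}{\partial y}\Psi(y)=-\lambda I_2$ as claimed.

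With the PDE verified, I would compute the transformed drift $f_z=\Psi^\top(y)[\nabla V(y)-Gu]+\tfrac{\partial\varphi}{\partial y}\Psi(y)\,x=\Psi^\top(y)[\nabla V(y)-Gu]-\lambda x$, so that $F:=\partial_x f_z=\tfrac{\partial\varphi}{\partial y}\Psi(y)=-\lambda I_2$. The contraction inequality \eqref{ct1} with $Q=2\lambda P=2\lambda I_2$ then reads $F+F^\top+Q=-2\lambda I_2+2\lambda I_2=0\preceq 0$, so {\bf H3} holds with exponential rate $\lambda$ and Theorem \ref{thm:convex-observer} applies; substituting $\hat x=\phil(\xi,y)=\xi-\varphi(y)$ into $\dot\xi=f_z(\hat x,y,u)$ reproduces \eqref{obs:pendulum} verbatim. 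Because $f_z$ is affine in $x$, I can moreover obtain the rate directly: setting $e:=\hat x-p=\xi-\phi(p,q)$ and using $\dot p=\Psi^\top(y)[\nabla V(y)-Gu]$ together with $\tfrac{\partial\varphi}{\partial y}\Psi(y)=-\lambda I_2$, the $y$- and $u$-dependent terms cancel and $\dot e=-\lambda e$ \emph{exactly}, whence $|\hat x(t)-p(t)|=e^{-\lambda t}|\hat x(0)-p(0)|$ globally, with no need for the state-boundedness hypothesis of Proposition \ref{prop:translation}.

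The only genuine obstacle is the \emph{solvability} of \eqref{pde}: rewriting it as $\tfrac{\partial\varphi}{\partial y}=-\lambda\Psi^{-1}(y)$ shows a solution exists only if each row of $\Psi^{-1}(y)$ is a closed (exact) one-form, an integrability condition that fails for a generic full-rank $\Psi$. It succeeds here precisely because the mechanical structure renders $\Psi$ lower triangular with entries depending on $y_1$ alone, so the relevant cross-partials vanish identically; the explicit primitive \eqref{varphi:pendulum} then exists and the rest of the argument reduces to the direct verification sketched above.
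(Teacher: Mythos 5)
Your proposal is correct and follows essentially the same route as the paper's proof: verify that $\varphi$ in \eqref{varphi:pendulum} solves the PDE \eqref{pde} so that {\bf H3} holds with $P=I_2$ and $F+F^\top+2\lambda I_2=0$, then observe that the transformed dynamics $\frac{d}{dt}\phi(x,y)=-\lambda\phi+\lambda\varphi(y)+\Psi^\top(y)[\nabla V(y)-Gu]$ is exactly replicated by \eqref{obs:pendulum}, giving exponential convergence. Your explicit componentwise verification of the PDE, the closed-form error dynamics $\dot e=-\lambda e$ (which makes the global rate transparent without any boundedness assumption), and the integrability remark are all correct elaborations of steps the paper leaves implicit.
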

\begin{proof}
Since the function \eqref{varphi:pendulum} is the solution of \eqref{pde}, we thus have
$
F^\top + F + 2\lambda I_2 = 0
$
satisfying the condition {\bf H3}. Invoking
$$
\begin{aligned}
{d\over dt} (\phi(x,y)) & = {\partial \varphi \over\partial y}(y) \Psi(y) x + \Psi^\top(y)[\nabla V(y) + Gu]\\
& = -\lambda \phi(x,y)+ \lambda\varphi(y) + \Psi^\top(y)[\nabla V(y) + Gu],
\end{aligned}
$$
thus we get the observer above, such that $\hat x$ is an exponentially convergent estimate of $p = \Psi^{-1}\dot q$. 
\end{proof}

We also give the simulation results in Fig. \ref{fig:3} with parameters $m=1,a=1,b=0.1$, initial conditions $\xi(0)= [0,0]^\top$, $p(0)=[0.4,0.3]^\top$ and $q(0)=[\hal \pi -0.1, -0.1]$, and $u(t)=0.2\cos(t)$, using different adaptation gains. Clearly, a large parameter $\lambda$ implies a fast convergence speed.

\begin{figure}
    \centering
    \includegraphics[width=\linewidth]{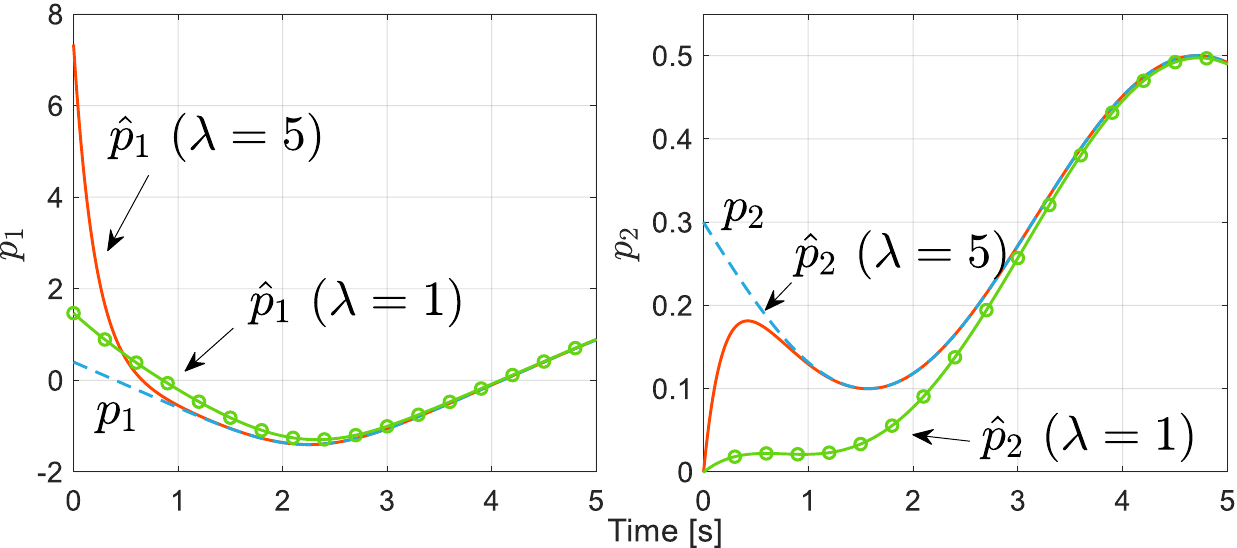}
    \caption{Estimation performance of the cart-pendulum system}
    \label{fig:3}
\end{figure}

\subsection{A biological reactor}

The last example is used to illustrate the case $n_\xi > n_x$ in Subsection \ref{sec:3d}. We now consider a model of reaction in \cite{RAPMAL}
\begin{equation}
\label{reactor}
\dot x = - {1\over k}\mu(x) y , \quad \dot y = \mu(x) y,
\end{equation}
with the states $(x,y)\in \rea^2_{>0}$, a parameter $k>0$ and $\mu(\cdot)$ a non-negative smooth function such that $\mu(0)=0$, and we refer the reader to \cite{RAPMAL} for the physical meanings of states and parameters. We here adopt the non-monotonic growth function 
$$
\mu(x) = r x \left(1-{x\over c}\right),~~x \in [0,c]
$$
with $r,c>0$ to characterize some inhibition effect of the reactor. To simplify the presentation, we assume all the positive constants equal to one. It was shown in \cite{RAPMAL} that $[y, f_y]$ is not injective as soon as $\mu(x)$ is non-monotonic, thus with the need to increase the observer dimension. Here, we follow Proposition \ref{prop:convex-observer-immersion} to provide a new design as follows.
\begin{proposition}
Consider the system \eqref{reactor}. The observer
$$
\begin{aligned}
\dot \xi  = - \xi + 
\begmat{\ln y \\ 2y+1 \\ y+y^2 \\
\hal \xi_2 - \hal(\xi_2^2 - 4(\xi_3-\xi_1+\ln y))^{1\over 2}
},~
\hat x = \xi_4 -y
\end{aligned}
$$
guarantees $\lim_{t\to\infty}|\hat x(t) - x(t)|=0$.
\end{proposition}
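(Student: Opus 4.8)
The plan is to cast this as the immersion case of Subsection \ref{sec:3d} and apply Proposition \ref{prop:convex-observer-immersion}. With all constants set to one, \eqref{reactor} reads $\dot x = -\mu(x)y$, $\dot y = \mu(x)y$ with $\mu(x)=x(1-x)$; note $\dot x + \dot y = 0$, so $\theta := x+y$ is conserved and the states stay positive and bounded (indeed $y$ is nondecreasing with $y\le\theta$, and $x = \theta - y \to 0$). Here $n_x = n_y = 1$ and $n_\xi = 4$, so I would augment by $w\in\rea^3$ ($n_w = 3$). Since only the output enters, I take the \emph{filtered transformation} $\dot w = -w + g(y)$ with $g(y) = \col(\ln y,\,2y+1,\,y+y^2)$; this is of the form \eqref{fwA} with $A = -I_3$ Hurwitz, so \eqref{contraction:w} holds with $M_w = I_3$, $\lambda_w = 1$, and Lemma \ref{lem:argument} guarantees the augmentation preserves detectability.

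Next I would exhibit the transformation and check \textbf{H1$'$}--\textbf{H2$'$}. Take $\phi(x,w,y) = \col(w_1,w_2,w_3,\,x+y)$ — equivalently, ordering the augmented state as $x_e = \col(w,x)$, $\phi = x_e + \col(0,0,0,y)$, so that $\Phi_{xe} = I_4$ and $\varphi(y) = \col(0,0,0,y)$; then \textbf{H1$'$} holds trivially with $k = 2$, and the left inverse is $\phil(\xi,y) = \col(\xi_4 - y,\,\xi_1,\xi_2,\xi_3)$, reproducing $\hat x = \xi_4 - y$. In the $z$-coordinate the first three components reproduce the filters, while $\dot z_4 = \dot x + \dot y = 0$. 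Correctness \textbf{H2$'$} therefore amounts to the algebraic identity $z_4 = \hal z_2 - \hal (z_2^2 - 4(z_3 - z_1 + \ln y))^{1/2}$, i.e. that $z$ lies on the manifold $\mathcal{M}_y := \{\, z_4^2 - z_4 z_2 + z_3 - z_1 + \ln y = 0 \,\}$. I would verify correctness by showing $\mathcal{M}_y$ is forward invariant: differentiating the defining constraint, substituting $\dot z_1,\dot z_2,\dot z_3$ from the filters and $\dot y/y = \mu(x) = \mu(z_4 - y)$, and using the constraint itself, the derivative collapses identically to zero. Hence there is a particular solution $z^\star(t)$ with $z_4^\star - y \equiv x$, which is exactly the correctness condition.

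Finally I would establish contraction \textbf{H3$'$}/\textbf{H4$'$} and conclude. The Jacobian $F_e = \partial f_{ze}/\partial x_e$ is lower triangular with $-1$ on the diagonal and a single coupling row $\nabla_w g$; the filters $\xi_{1,2,3}$ form an autonomous contracting block (rate $1$) driving the contracting $\xi_4$-subsystem, so a cascade argument (equivalently, a weighted-metric certificate $P_e \succ 0$ dominating the bounded coupling) yields contraction on the forward-invariant set, and Propositions \ref{prop:convex-observer-immersion} and \ref{prop:translation} then give $\hat x \to x$. The main obstacle is the \emph{well-definedness and branch selection of the left inverse}: the square root in the fourth component requires the discriminant $D := z_2^2 - 4(z_3 - z_1 + \ln y)$ to stay positive (and bounded away from zero for a uniform rate), and the ``$-$'' branch to be the one recovering $z_4$ (i.e. $z_2 \ge 2z_4$). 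Because $\mu$ is non-monotonic, these are not automatic; I would secure them by restricting to the forward-invariant, bounded domain where the states are positive and $x + y = \theta$, on which $D$ and $z_2$ are controlled and $\nabla_w g$ is uniformly bounded — this domain analysis, rather than any single inequality, is where the real work lies.
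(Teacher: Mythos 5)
Your construction coincides with the paper's own proof: the same filtered augmentation \eqref{fwA} with $A=-I_3$ and $f_a(y)=\col(\ln y,\,2y+1,\,y+y^2)$, the same transformation $\phi=\col(w,\,x+y)$, and the same appeal to Proposition \ref{prop:convex-observer-immersion}. The paper establishes correctness by fixing $w_2(0)=w_3(0)=0$, $w_1(0)=(x(0)+y(0))^2+\ln y(0)$ and asserting the square-root identity along the resulting trajectory; that initial condition is precisely one point on your manifold $\mathcal{M}_y$, and your invariance computation (which indeed gives $\dot h=-h$ for $h:=z_4^2-z_2z_4+z_3-z_1+\ln y$) is exactly why such a particular solution exists, so the two formulations of correctness are equivalent. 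Your reordering $x_e=\col(w,x)$, which makes $\Phi_{xe}=I_4$, is moreover what makes {\bf H1$'$} and {\bf H3$'$} literally checkable.

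The gap sits in the one step you deferred. The branch-selection issue is real, but your proposed remedy---restricting to the forward-invariant domain where the states are positive and $x+y=\theta$, with $\theta:=x(0)+y(0)$---cannot resolve it, because the branch condition is a constraint on the \emph{filter} state, not on $(x,y)$. Writing $D:=z_2^2-4(z_3-z_1+\ln y)$, on $\mathcal{M}_y$ one has $D=(z_2-2\theta)^2$, so $\hal z_2-\hal\sqrt{D}$ equals $\theta$ when $z_2\ge 2\theta$ but equals $z_2-\theta$ when $z_2<2\theta$. With $w_2(0)=0$ (the paper's choice, and any choice made without knowledge of $\theta$) the particular solution starts in the wrong branch: at $t=0$ the paper's displayed identity reads $2\theta=-2\theta$, so even its claim ``for all $t\ge 0$'' is false as stated, and no restriction of the $(x,y)$-domain changes this. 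The correct repair is asymptotic rather than geometric: since $x(t)\to 0$ and $y(t)\to\theta$, one has $w_2(t)\to 2\theta+1>2\theta$ and $D\to 1>0$, so after a finite time the minus branch is the right one and the discriminant is uniformly positive; because $\xi_{1,2,3}-w\to 0$ exponentially, the drive $\hal\xi_2-\hal\sqrt{D(\xi_{1,2,3},y)}$ converges to $\theta$, and the exponentially stable scalar $\xi_4$-equation then yields $\xi_4\to\theta$, hence $\hat x=\xi_4-y\to x$. (Strictly speaking, for arbitrary observer initial conditions one should also clamp the discriminant at zero to keep the observer well defined during the transient---a point neither you nor the paper addresses.) With this eventual-correctness argument substituted for your domain restriction, your proof goes through, and your explicit identification of the branch problem is more careful than the paper's ``straightforward to verify''.
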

\begin{proof}
We design an augmentation $\dot w = Aw + f_a(y)$ with $A=-I_{3\times 3}$, the initial condition constraints $w_2(0)=w_3(0)=0,~w_1(0)=(x(0)+y(0))^2 + \ln (y(0))$. It should be underscored here that such initial conditions are \emph{not} used in the observer implementation. It is straightforward to verify for all $t\ge 0$
$$
\begin{aligned}
2z_4(t) = w_2(t) - [(w_2^2(t) - 4(w_3(t)-w_1(t)+\ln(y(t))]^{1\over 2},
\end{aligned}
$$
with $z_4:=x+y$. By selecting the transformation as
$
\phi(x,w,y) = \col(w, x+y) \in \rea^4,
$
it is straightforward to verify {\bf H1$'$}, {\bf H2$'$} and {\bf H3$'$}. 
\end{proof}

Our design is compared with the high-gain observer (HGO) proposed in \cite{RAPMAL}, which is given by
$$
\footnotesize\begin{aligned}
\dot \xi & = \begmat{\xi_2 \\ -{\xi}_3\min(e^{\xi_1}, \bar \xi_1)  \\
\underset{m,M}{\tt sat}\left(e^{\xi_1} (2\xi_2^2-\xi_3\tilde\phi(\xi))\right)}
 +
\begmat{-3\ell \\ -3\ell^2 \\3\ell\xi_3 \mathbb{1}_{\xi\le \ln \bar\xi_1} + \ell^3 \max(e^{-\xi_1}, {1\over \bar\xi_1})} e_y
\\
\hat x & = \begmat{e^{\xi_1} \\ \hal(1-\tilde\phi(\chi)) }, \quad
e_y := \xi_1 - \ln y
\end{aligned} 
$$
with some parameters $\bar\xi_1, m,M, \xi_2^\star$, $P(\xi_2):= \xi_2((\xi_2/\xi_2^\star)^2 - 3\xi_2/\xi_2^\star +1)$ and a function
$$\footnotesize
\tilde \phi(\xi) = 
\left\{
\begin{aligned}
&- 1 & & \mbox{if~}\xi_2> \xi_2^\star \mbox{~and~}\xi_3<-\xi_2 \\
&\left({\xi_2/\xi_2^\star}\right)^{2} - 3{\xi_2/\xi_2^\star} +1 &&
\mbox{if~}\xi_2 \in (0,\xi_2^\star] \mbox{~and~} \xi_3 < P(\xi_2) 
\\
&{\xi_3 / \xi_2} && \mbox{otherwise}.
\end{aligned}
\right.
$$
Clearly, the HGO is much more complicated, stemmed from Lipschitsian extensions. These two observers are compared via simulations, with the initial conditions $[0, 0, 0, 0.5]^\top$ in the proposed contracting observer, and $[1, 0.1, 0]^\top$, together with the gain $\ell =3$ for the HGO. Some simulations were done for the cases with and without measurement noise, shown in Fig. \ref{fig:reactor}. It is well known that the convergence  speed of HGOs mainly depends on the adaptation gain $\ell>0$. In order to make a fair comparison to evaluate their effects to noise, we tuned them, via changing $\ell$, such that their convergence times are similar. As expected, our new design overperforms the HGO, in particular in the presence of measurement noise, since the underlying mechanism of HGOs makes them highly sensitive to measurement noise. We underline that the effect of noise in the HGO becomes increasingly serious, since the studied trajectory verifies $x_2(t) \to 0$ and $\xi_2$ is its estimate, with $\xi_2$ appearing in the denominator of $\bar\phi(\xi)$. Besides, peaking phenomenon is clearly observed in the HGO, which usually requires some extra modification, {\em e.g.}, adding saturation function or projector \cite{KHAbook}, to avoid deleterious effects.

\begin{figure*}
    \centering
    \includegraphics[width=0.85\linewidth]{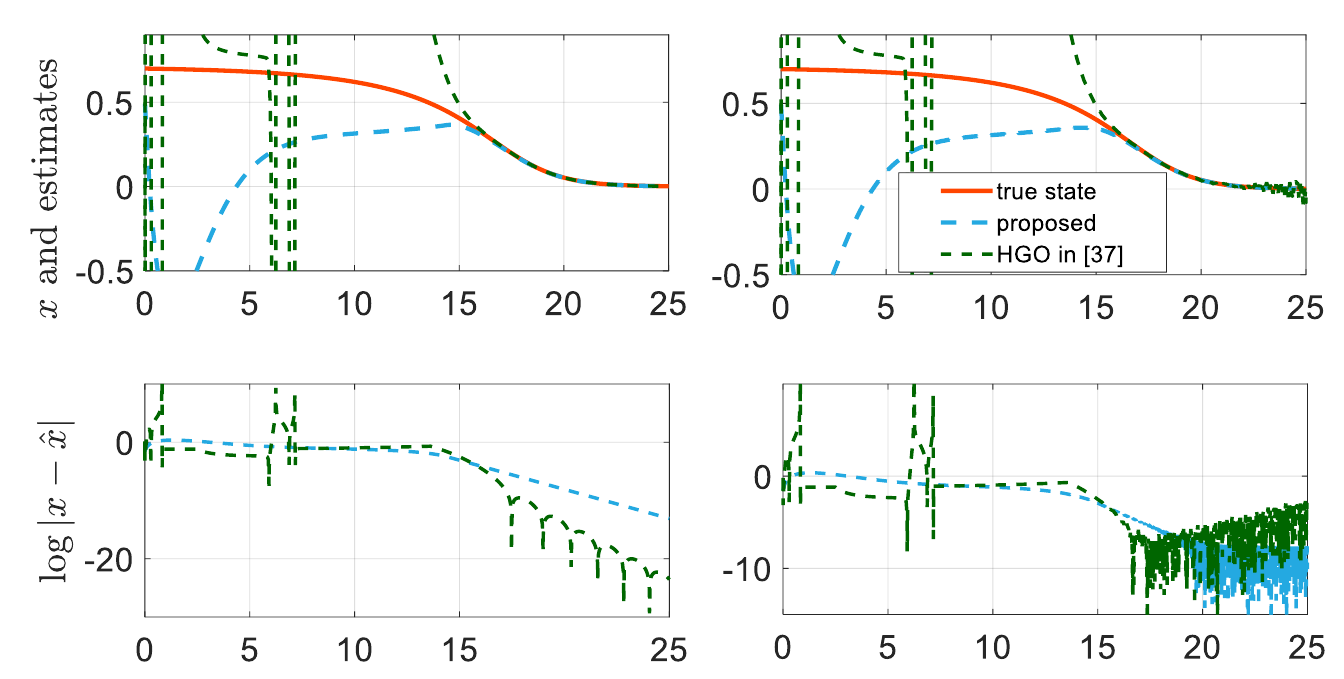}
    \caption{Simulation results of the reactor model with comparison to the high-gain observer in \cite{RAPMAL}}
    \label{fig:reactor}
\end{figure*}

\section{Concluding Remarks}
\label{sec6}

In this paper we address the problem of reduced-order observer design for nonlinear control systems by means of contraction analysis and convex optimization. As we have shown, searching for a change of coordinate---after which the system is (transversely) contracting---plays a key role in the constructive procedure, as well as providing sufficient degrees of freedom to characterize ``detectability'' for nonlinear systems. Then we formulate the obtained sufficient condition into a convex optimization problem by introducing the image as a new decision variable. We apply the main results to linear systems and strongly differentially observable nonlinear systems, showing generality of the proposed framework. Links to I\&I observers and transverse contraction are also clarified in the paper. At the end, we apply the method to several academic and practical models as illustrations. 

Along this research line, the following problems are underway.

\begin{itemize}
    \item[-] If the correctness is infeasible in some sense, the proposed method is still able to provide estimation with ultimate bounded errors. It is also interesting to study interval observers using the introduced framework.
    
    \item[-] After getting a convex sets of contracting observers, it is of practical interest to select the best or mix them in the presence of unavoidable measurement noise.
    
    \item[-] For nonlinear nonautonomous systems, the necessary condition on the existence of an asymptotic observer is still open. If it exists, it is natural to study the relationship between the necessary condition and the condition {\bf A2}.
    
    \item[-] Some practical models are not uniformly observable, but may be asymptotically estimated when the system trajectories satisfy additional excitation conditions. It is interesting to generalize the proposed framework to this case.
    
    \item[-] In the recent work \cite{BERAND}, the exogenous input $u(t)$ is involved in a coordinate transformation. If we allow this degree of freedom in the proposed method, it will definitely extend the realm of applicability. At the moment, such a time-varying transformation in our framework only makes sense conceptually, but with difficulty to implement it.
    \end{itemize}

\appendix

%
\section*{A. The Proof of Proposition \ref{prop:ccm}}
Invoking Wazewski theorem \cite{BER} and the fact that Euclidean space is contractible, we obtain that the Jacobian completion of $\nabla \psi (\chi)$ is solvable, {\em i.e.}, there exists a mapping $\tilde g: \rea^{n_\chi} \to \rea^{n_\chi \times (n_\chi -r)}$ such that
$$
\det \begmat{{\partial \psi \over \partial \chi}^\top(\chi) & \tilde{g}(\chi) } \neq 0.
$$
We can use Gram-Schmidt process to orthonormalize the column vectors of $\begmat{\nabla \psi(\chi) & \tilde g(\chi)}$ with the standard inner product in Euclidean space, and the obtained last $(n_\chi -r)$ vectors are denoted as $g(\chi)$. It is clear that $g(\chi)$ is uniformly full rank, and each of its column $g_i(\chi)$ verifies
\begin{equation}
\label{gi-in-im}
g_i(\chi) \in \mbox{ker}\left( {\partial \psi \over \partial \chi}(\chi)\right),\quad \forall i \in \{1,\ldots, n_\chi -r\}.
\end{equation}

Now we select an arbitrary piecewise continuous $\calc^1$ path $\gamma: [0,1] \to \rea^{n_\chi}$ between any two point $\chi_a =\gamma(0) $ and $\chi_b = \gamma(1)$. Consider a family of auxiliary systems
\begin{equation}
\label{aux-syst}
\begin{aligned}
{d\over dt} \Gamma(s,t) = f(\Gamma(s,t),u(t)) + g(\Gamma(s,t)) v(s,t),
\end{aligned}
\end{equation}
with $\Gamma(s,0) =\gamma(s)$, $\Gamma(\cdot) \in \rea^{n_\chi}$ and the \emph{control input} $v(s,t)$, where we regard $u(t)$ as an exogenous time-varying signal. By calculating its differential dynamics, we have
\begin{equation}
\label{diff_aux}
\begin{aligned}
& {d\over dt} {d\over ds} \Gamma(s,t) = g(\Gamma(s,t)) {d\over ds} v(s,t) \\
& \hspace{1.2cm}+ \left({\partial f \over \partial \chi}(\Gamma(s,t),u(t)) + \sum_{i=1}^{n_\chi -r} {\partial g_i \over \partial \chi } v_i(s,t) \right)
 {d\over ds}\Gamma(s,t)
\end{aligned}
\end{equation}
According to \cite[Theorem 1]{MANSLOtac} and the full-rank property of $g(\cdot)$, there exists a differential controller ${d\over ds} v(s,t) = \delta v (\Gamma(s,t),{d\over ds}\Gamma(s,t),t) $ such that the origin of the differential dynamics \eqref{diff_aux} is asymptotically stable uniformly in $s \in[0,1]$ with a quadratic Lyapunov function. Then, the controller
$
v(s,t) = - \int_s^1 \delta v (\Gamma(\sigma,t),{d\over ds}\Gamma(\sigma,t),t ) d \sigma
$
makes the close loop incrementally asymptotically stable, thus
\begin{equation}
\label{converge_phichi}
\lim_{t\to\infty} | \Gamma(1,t) - \Gamma(0,t) | =0.
\end{equation}

On the other side, since $\Gamma(1,0) = \hat{\chi}(0)$ and $v(1,t) = 0$, we have
\begin{equation}
\label{Gamma1}
\Gamma(1,t) = \hat\chi(t).
\end{equation}
We also have
\begin{equation}
\label{Gamma0}
\begin{aligned}
 & {d\over dt} \Big( \psi(\chi(t)) - \psi(\Gamma(0,t) \Big) \\
& \hspace{1cm} = {\partial \psi\over \partial \chi}(\chi(t)) f(\chi(t),u) \\
& \hspace{1cm} \quad  -
 {\partial \psi\over \partial \chi}(\Gamma(0,t))  \big(f(\Gamma(0,t),u) + g(\Gamma(0,t) v(0,t) \Big)
 \\
& \hspace{1cm}  \overset{\eqref{gi-in-im}}{=} {\partial \psi\over \partial \chi}(\chi(t)) f(\chi(t),u) -
 {\partial \psi\over \partial \chi}(\Gamma(0,t))  f(\Gamma(0,t),u) ,
\end{aligned}
\end{equation}
together with $ \Gamma(0,0)= \chi(0)$ it implying
\begin{equation}
\label{equiv_phi}
  \psi(\chi(t)) = \psi(\Gamma(0,t)) , \qquad \forall t\ge 0.
\end{equation}
Combining \eqref{converge_phichi}, \eqref{Gamma1} and \eqref{equiv_phi}, we prove the first claim. The claim 2) is trivial to verify by fixing $\chi = \chi(t;\chi_a,u)$ and $\hat\chi = \chi(t;\chi_b,u)$, verifying the second claim.

Regarding the last item, invoking the forward invariance assumption, we have $\psi(\chi) \in \psi(\cale)$ for all $t\ge 0$. Recalling the convergence \eqref{trans-contraction2}, for any $\epsilon >0$ we can always find a moment $t_\epsilon$ such that
$
\psi(\hat \chi(t))  \in B_\epsilon(\cale), ~ \forall t\ge t_\epsilon.
$
In terms of injectivity, the mapping defined in \eqref{psil} guarantees
$$
\psi(\hat \chi) \in B_\epsilon(\cale)
$$
implying
$$
|\psi^{\tt L}(\psi(\hat \chi),y) - \psi^{\tt L}(\psi(\chi),y) | \le \rho_y (|\psi(\hat \chi) - \psi(\chi)|).
$$
It, then, yields
$$
|\hat x(t) - x(t)| \le \rho_y\big( |\psi(\hat \chi(t)) - \psi(\chi(t)) |\big), ~ t\ge t_\epsilon.
$$
Combining \eqref{trans-contraction2}, we complete the proof.
\QED

\section*{B. An Alternative Condition to Transverse Contraction}

In this section, we give an alternative sufficient condition to transverse contraction, by using a \emph{semi-definite} Riemannian metric $W(\chi)$. A similar problem arises in \cite{WANetal}.

\begin{proposition}
\label{prop:alternative}\rm
Consider the system \eqref{chi} forward invariant in $\cale$ under input $u(t)$. Suppose there exists a positive semi-definite $W: \rea^{n_\chi} \to \rea^{n_\chi \times n_\chi}_{\succeq 0}$ with rank $r>0$, parameterized as $W(\chi) := \Psi(\chi)P(\chi)\Psi^\top(\chi)$ with $\Psi(\chi) \in \rea^{n_\chi \times r}$ and $P(\chi) \in \rea^{n_\chi \times n_\chi}$ both full rank.\footnote{A positive semi-definite matrix can always be represented in such a parameterization.} If the inequality
\begin{equation}\label{ineq:trans}
  \partial_f W(\chi) + {\partial f\over \partial \chi}^\top W(\chi) + W(\chi) {\partial f\over \partial \chi} \prec 0
\end{equation}
holds, and $\nabla \Psi_i(\chi) = (\nabla \Psi_i (\chi))^\top$ for $i =1 \ldots, r$, then the given system is transversely asymptotically contracting w.r.t. the function $\psi$ defined by
$
\psi (\chi) = \int_0^1 (\Psi(s\chi))^\top \chi ds.
$
\end{proposition}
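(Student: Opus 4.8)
The plan is to exploit the integrability hypothesis to turn $\psi$ into a genuine potential for $\Psi$, and then run a differential-Lyapunov argument with the \emph{singular} metric $W$, globalized by path integration exactly as in the proofs of Theorem~\ref{thm:reduced} and Proposition~\ref{prop:ccm}. First I would show that the symmetry condition $\nabla\Psi_i=(\nabla\Psi_i)^\top$ makes each column $\Psi_i$ a closed vector field on the contractible space $\rea^{n_\chi}$; by the Poincar\'e lemma it is exact, so $\Psi_i=\nabla\phi_i$ for a scalar potential with $\phi_i(0)=0$. Recovering the potential by the radial line integral, $\phi_i(\chi)=\int_0^1\frac{d}{ds}\phi_i(s\chi)\,ds=\int_0^1\Psi_i(s\chi)^\top\chi\,ds$, shows that $\phi_i$ coincides with the $i$-th component of the stated $\psi$. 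Hence $\psi$ is well defined with $\frac{\partial\psi}{\partial\chi}=\Psi^\top$, which is full rank because $\Psi$ is, certifying $\psi$ as an admissible transverse function in the sense of Definition~\ref{def:trans}.

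Next I would introduce the differential dynamics $\frac{d}{dt}\delta\chi=\frac{\partial f}{\partial\chi}\delta\chi$ along a solution and the candidate $V:=\delta\chi^\top W(\chi)\delta\chi$. The key observation is that, writing $\delta\psi:=\Psi^\top\delta\chi=\frac{\partial\psi}{\partial\chi}\delta\chi$, the factorization $W=\Psi P\Psi^\top$ gives $V=\delta\psi^\top P\,\delta\psi$, so $V$ measures precisely the infinitesimal displacement of $\psi$. Differentiating along the flow and using $\frac{d}{dt}W(\chi)=\partial_f W$ yields
\begin{equation*}
\dot V=\delta\chi^\top\Big(\partial_f W+\frac{\partial f}{\partial\chi}^\top W+W\frac{\partial f}{\partial\chi}\Big)\delta\chi,
\end{equation*}
i.e. the quadratic form of the left-hand side of \eqref{ineq:trans}. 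Reading \eqref{ineq:trans} as the contraction estimate $\partial_f W+\frac{\partial f}{\partial\chi}^\top W+W\frac{\partial f}{\partial\chi}\preceq-2\lambda W$ relative to the semi-metric (and invoking the uniform bounds $p_1 I\preceq P\preceq p_2 I$ on the compact forward-invariant set $\cale$) gives $\dot V\le-2\lambda V$, whence $V(t)\le e^{-2\lambda t}V(0)$ and $|\delta\psi(t)|\le\sqrt{p_2/p_1}\,e^{-\lambda t}|\delta\psi(0)|$.

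Finally I would globalize by path integration. Joining $\chi_a,\chi_b$ by a smooth curve $\gamma\subset\cale$, propagating every point by the flow and setting $\delta\chi(s,t)=\partial_s\chi(t;\gamma(s),u)$, the bound above gives $|\partial_s\psi(\chi(t;\gamma(s),u))|=|\delta\psi(s,t)|\le\sqrt{p_2/p_1}\,e^{-\lambda t}|\delta\psi(s,0)|$, so that
\begin{equation*}
|\psi(\chi(t;\chi_a,u))-\psi(\chi(t;\chi_b,u))|\le\int_0^1|\delta\psi(s,t)|\,ds\le e^{-\lambda t}b(\chi_a,\chi_b),
\end{equation*}
with $b(\chi_a,\chi_b):=\sqrt{p_2/p_1}\int_0^1|\Psi(\gamma(s))^\top\gamma'(s)|\,ds\ge0$, which vanishes for $\chi_a=\chi_b$. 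This is exactly \eqref{trans-contraction}, hence in particular \eqref{trans-contraction2}, establishing transverse (asymptotic) contraction with respect to $\psi$.

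The main obstacle is the singularity of $W$. Since $W\succeq0$ has rank $r<n_\chi$, the function $V$ controls only the $\psi$-component $\delta\psi$ and \emph{not} the full displacement $\delta\chi$; moreover the form $\dot V$ vanishes identically on $\ker\Psi^\top=\ker W$, because any fixed $v\in\ker W(\chi_0)$ minimizes $\chi\mapsto v^\top W(\chi)v\ge0$, forcing $v^\top\partial_f W\,v=0$. Consequently \eqref{ineq:trans} must be used as a decay of $V$ relative to the semi-metric itself (the inequality $\preceq-2\lambda W$ used above), rather than as a bound on $|\delta\chi|$, so that the uncontrolled kernel directions are never required to contract. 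Making this reduction precise, and verifying that all constants ($\lambda$, $p_1$, $p_2$, and the curve length) are uniform over $\cale$ and over the admissible inputs $u$, is the only delicate point; the remainder parallels the differential-Lyapunov-plus-path-integration scheme already employed in Theorem~\ref{thm:reduced} and Proposition~\ref{prop:ccm}.
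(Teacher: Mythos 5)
Your proposal follows essentially the same route as the paper's Appendix~B proof: the differential Lyapunov function $V=\delta\chi^\top W(\chi)\delta\chi$, the observation that $V$ only measures the displacement $\delta\psi=\Psi^\top\delta\chi$ (the paper phrases this through the splitting $\delta\chi=\Psi\Psi^\dagger\delta\chi+(I-\Psi\Psi^\dagger)\delta\chi$, you through the cleaner identity $V=\delta\psi^\top P\,\delta\psi$), the identity $\partial\psi/\partial\chi=\Psi^\top$ coming from the symmetry of the $\nabla\Psi_i$ (declared ``obvious'' in the paper, derived by you via the Poincar\'e lemma), and globalization by propagating a curve $\gamma\subset\cale$ joining $\chi_a$ to $\chi_b$ and integrating in $s$. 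The one substantive difference is your handling of \eqref{ineq:trans}, and there you are \emph{more} careful than the paper. As you observe, when $r<n_\chi$ the left-hand side of \eqref{ineq:trans} necessarily vanishes as a quadratic form on $\ker W(\chi)$: for $v\in\ker W(\chi)$ the terms involving $\partial f/\partial\chi$ die because $Wv=0$ and $W$ is symmetric, while $v^\top\partial_f W\,v=0$ because $\chi$ globally minimizes the nonnegative function $\chi'\mapsto v^\top W(\chi')v$. Hence the strict inequality $\prec 0$ is unsatisfiable for singular $W$, and the paper's own step ``for any nonvanishing $\delta\chi$, $\dot V<0$'' cannot hold as written; the paper moreover does not justify why $\dot V<0$ alone would force $V\to 0$. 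Your reinterpretation of \eqref{ineq:trans} as the relative inequality $\preceq-2\lambda W$ is the natural repair: it is satisfiable, it immediately gives $\dot V\le-2\lambda V$ and thus exponential decay of $|\delta\psi|$, and the resulting uniform bound legitimizes the exchange of limit and path integral in the final step, which the paper performs without comment. The only points left informal in your write-up---compactness and path-connectedness of $\cale$ (so that curves inside $\cale$ of length comparable to $|\chi_a-\chi_b|$ exist, which is what upgrades your bound \eqref{trans-contraction} to the class-$\mathcal{KL}$ form \eqref{trans-contraction2}), and symmetry with uniform bounds for $P$---are at the same level of rigor as the paper's own argument and are easily discharged; in net effect your proof establishes a corrected, non-vacuous version of the proposition, whereas the statement as printed is vacuous for $r<n_\chi$.
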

\begin{proof}
We choose a differential Lyapunov function
$$
V(\chi,\delta\chi) = \delta \chi^\top W(\chi) \delta \chi,
$$
and define
$$
\delta \chi_h := \Psi(\chi) \Psi(\chi)^\dagger \delta \chi,
$$
and 
$$
\delta \chi_v := [I -\Psi(\chi) \Psi(\chi)^\dagger ] \delta \chi.
$$
For each $(\chi, \delta\chi) \in  \rea^{n_\chi} \times T\rea^{n_\chi}$, we have
$$
\dlc = \dlc_h + \dlc_v,
$$
and then $V(\chi,\delta\chi) = \dlc_h ^\top W(\chi) \dlc_h$,
and thus
$
V(\chi,\dlc) = V(\chi,\dlc_h)
$
The associated differential dynamics is
$
{d\over dt} \delta \chi
=
{\partial f(\chi,u)\over \partial \chi}  \delta \chi,
$
and for any nonvanishing $ \dlc$,
$$
\dot V(\chi,\dlc)
 = \dlc^\top
( \partial_f W + {\partial f\over \partial \chi}W + W{\partial f \over \partial \chi}^\top
) \dlc < 0,
$$
thus
\begequ
\label{eq:convergence_v}
\lim_{t\to\infty} V(\chi(t),\dlc(t)) =0.
\endequ

It is obvious that $\nabla \psi(\chi) = \Psi^\top(\chi)$. Consider a differential curve $\gamma:[0,1] \to \rea^{n_\chi}$ with $\gamma(s) \in \cale$, we parameterize two different initial conditions as $\chi_a(0) = \gamma(0)$ and $\chi_b(0) = \gamma(1)$. Then, it yields from \eqref{eq:convergence_v} the following implications.
$$
\begin{aligned}
 &\Longrightarrow
\; \lim_{t\to\infty }\int_{0}^1 \left| {d\over ds}\chi(t; \gamma(s),u)^\top \Psi(\chi(t; \gamma(s),u)) \right|_{P(\cdot)} ds =0
\\
&\Longrightarrow
\;
\lim_{t\to\infty }{d\over ds} \psi(\chi(t; \gamma(s),u))  =0
\\
& \Longrightarrow
\;
\lim_{t\to\infty} |\psi(\chi(t;\chi_a,u)) - \psi(\chi(t;\chi_b,u)) | =0
\end{aligned}
$$
where in the third implication we have used $\det(P(x))\neq 0$ and the boundedness of $P(\cdot)$. It completes the proof.
\end{proof}

\section*{Acknowledgement}
The authors would like to express their gratitude to three reviewers for their careful reading of our manuscript, and many thoughtful comments that helped improve its clarity.

\end{document}